\newtheorem{theorem}{Theorem}[section]
\newtheorem{lemma}[theorem]{Lemma}
\newtheorem{corollary}[theorem]{Corollary}
\newtheorem{proposition}[theorem]{Proposition}
\newtheorem*{keywords}{Keywords}
\newenvironment{romanlist}{

	\let\item\Item
	\begin{enumerate}
	}{
	\end{enumerate}}
	\let\Item\item
\def\address#1{\expandafter\def\expandafter\@aabuffer\expandafter
	{\@aabuffer{\affiliationfont{#1}}\relax\par
	\vspace{13pt}}}
\numberwithin{equation}{section}
\newcommand{\RR}{\mathbb{R}}
\newcommand{\ZZ}{\mathbb{Z}}
\newcommand{\CC}{\mathbb{C}}
\newcommand{\TT}{\mathbb{T}}
\def\cP{\mathcal{P}}
\def\cL{\mathcal{L}}
\def\cA{\mathcal{A}}
\def\cH{\mathcal{H}}
\DeclareMathOperator{\Ker}{Ker} 
\DeclareMathOperator{\supp}{supp} 
\DeclareMathOperator{\Ree}{Re} %
\DeclareMathOperator{\Imm}{Im} %
\newcommand{\noopsort}[1]{} %
\begin{document}

\title{
Construction of Bases in Modules over Laurent Polynomial Rings and Applications to Box Spline Prewavelets}%

\author{Oleg Davydov\thanks{Department of Mathematics,
Justus Liebig University  Giessen, Arndtstrasse 2, 35392 Giessen, Germany,
\url{oleg.davydov@math.uni-giessen.de}} \quad 
and\quad Anatolii Tushev\thanks{Department of Mathematics,
Justus Liebig University  Giessen, Arndtstrasse 2, 35392 Giessen, Germany,
\url{anavlatus@gmail.com}
}}

\date{August 2, 2025}

\maketitle

\begin{abstract}
We suggest a new method of basis construction for the kernel of a linear form on 
the Laurent polynomial module related to multivariate wavelets, and demonstrate its 
applications to box spline prewavelets, leading to small mask supports for $C^1$ cubic and $C^2$ quartic box splines in two
variables, outperforming previously known constructions,
and to  trivariate piecewise linear prewavelets with at most 23 nozero mask coefficients.
 \begin{keywords}
Wavelets, Laurent polynomials, basis of a module, box splines
\end{keywords}
\end{abstract}

\section{Introduction}\label{intro}

Recall that the scaling function $\varphi:\RR^d\to\CC$ 
of a multiresolution analysis in $d$ variables 
(see e.g.~\cite{Meyer93,KPS16}) belongs to
$L_2(\RR^d)$ and satisfies a refinement equation 
\begin{equation}\label{refeq}
\varphi(x)=\sum_{k\in\ZZ^d}h_k \varphi(2x-k),\qquad h_k\in\CC,\quad x\in\RR^d,
\end{equation}
for a certain coefficient sequence $h\in\ell_2(\ZZ^d)$,
which can be expressed in terms of the Fourier transform $\hat\varphi(u):=\int_{\RR^d}\phi(x)e^{-iu^T\!x}\,dx$ as
\begin{equation}\label{refeqH}
\hat \varphi(2u)=H(z)\hat \varphi(u),\qquad u\in\RR^d,\quad z=e^{-iu}\in\TT^d,
\end{equation}
$$
\TT^d:=\{z\in\CC^d:|z_j|=1,\;j=1,\ldots,d\},$$
where $H$ is a Laurent series 
\begin{equation}\label{Hser}
H(z)=2^{-d}\sum_{k\in\ZZ^d}h_kz^k,\qquad\text{where}\quad z^k:=z_1^{k_1}\cdots z_d^{k_d}\in\CC^d.
\end{equation}

The Laurent series in \eqref{Hser} converges on $\TT^d$ in $L_2(\TT^d)$-norm as a Fourier series with square summable
coefficients. Denote by $\cL(\ell_p)$ the vector space
$$
\cL(\ell_p)=\Big\{\sum_{k\in\ZZ^d}a_kz^k:a\in \ell_p(\ZZ^d)\Big\},\qquad 1\le p\le\infty.$$
If $H\in \cL(\ell_1)\subset\cL(\ell_2)$, then the Fouries series is absolutely convergent and hence $H$ 
is continuous on $\TT^d$. Note that $\cL(\ell_1)$ is a commutative ring since 
the product of two absolutely convergent Fourier series is again an absolutely convergent Fourier series.

Note that $\bar z=z^{-1}$ for $z\in \TT^d$. Even though we prefer to consider the elements $P\in \cL(\ell_p)$ as Laurent series for
the sake of convenience of algebraic computations, we only need them for $z\in \TT^d$, where they are Fourier  series with
respect to $u$. Therefore we define the \emph{conjugation} operator by
\begin{equation}\label{conj}
\overline{P(z)}:=\sum_{k\in\ZZ^d}\overline{a_k}z^{-k}\quad\text{for}\quad P(z)=\sum_{k\in\ZZ^d}a_kz^k,\quad a_k\in\CC.
\end{equation}

If $\varphi$ has a compact support, then only finite many of $h_k$ are nonzero, and hence $H$ 
is a Laurent polynomial of a complex variable $z\in\CC^d$ with real coefficients
in the commutative ring
$$
\cL=\Big\{\sum_{k\in M}a_kz^k:M\subset\ZZ^d \text{ finite, } a_k\in\CC\Big\}.$$

The sequence of multiresolution spaces $V_j\subset L_2(\RR^d)$, $j\in\ZZ$, is obtained by
\begin{align*}
V_0&=\{f\in L_2(\RR^d):\hat f(u)=F(z)\hat \varphi(u),\; F\in \cL(\ell_2)\},\\
V_j&=\{f\in L_2(\RR^d):\hat f(u)=\hat g(2^{-j}u),\; g\in V_0\},\quad j\in \ZZ\setminus\{0\}.
\end{align*}
In particular the mapping
$$
\Lambda:V_0\to \cL(\ell_2),\quad \Lambda(f)=F\;\text{ such that }\;\hat f(u)=F(z)\hat \varphi(u),$$
is an isomorphism of vector spaces. The Laurent series $F(z)$ is commonly called the \emph{mask} of the function $f$.

An important assumption on the scaling function $\varphi$ is the Riesz condition for its integer translates, equivalent to the
existence of constants $A,B>0$ such that
\begin{equation}\label{Riesz}
A\le\Phi(z)\le B,\quad z\in\TT^d,
\end{equation}
where
\begin{equation}\label{Phi}
\Phi(z)=\sum_{k\in\ZZ^d}\tilde\varphi(k)\,z^k=\sum_{k\in\ZZ^d}|\hat\varphi(u+2\pi k)|^2,
\end{equation}
with the \emph{autocorrelation function} 
$$
\tilde\varphi(u):=\int_{\RR^d}\varphi(t)\overline{\varphi(t-u)}\,dt,\quad\text{ that is,}\quad 
\tilde\varphi=\varphi*\overline{\varphi(-\cdot)}.$$
If in addition 
\begin{equation}\label{conds}
\varphi\in L_1(\RR^d),\quad H,\Phi\in \cL(\ell_1),
\end{equation}
then the usual axioms of a multiresolution analysis are satisfied, see e.g.~\cite{Stockler93}. From now on we assume that
\eqref{Riesz} and  \eqref{conds} are satisfied. In particular, $H,\Phi$ are continuous functions on $\TT^d$.

Refinement equation \eqref{refeqH} implies that $V_{j}\subset V_{j+1}$, $j\in\ZZ$, and the wavelet spaces $W_j$ are defined as
the orthogonal complements of $V_{j-1}$ in $V_{j}$, such that $L_2(\RR^d)=\bigoplus_{j\in\ZZ}W_j$ with pairwise orthogonal
spaces $W_j$. Since
$$
W_j=\{f\in L_2(\RR^d):\hat f(u)=\hat g(2^{-j}u),\; g\in W_0\},\quad j\in \ZZ\setminus\{0\},$$
any Riesz basis for a single space $W_j$ gives rise to Riesz bases for all of them by scaling, and hence to a Riesz basis for $L_2(\RR^d)$. 
Therefore  we look for a Riesz basis for the orthogonal complement $W_{0}$ of $V_{-1}$ in $V_0$.

In particular, such a Riesz basis may be generated by even translates of a finite set of functions called semi-orthogonal
wavelets or prewavelets.  That is, 
functions $\psi_s\in W_0$, $s\in S$, where $S$ is a finite index set, are said to be \emph{prewavelets} if the set
\begin{equation}\label{prewbasis}
\big\{\psi_s(\cdot-2k):s\in S,\;k\in\ZZ^d\big\}
\end{equation}
is a Riesz basis for $W_0$. 
Prewavelets are uniquely determined by their masks  $H_s=\Lambda(\psi_s)$. The coefficients of the Laurent series $H_s$ are
the coefficients of the reconstruction filter for the wavelet-based signal processing. The subset of $\ZZ^d$ for which these
coefficients are nonzero is called the \emph{support} of the mask $H_s$. A favorable feature for applications is when the
reconstruction filter  is finite, that is, $H_s$ are Laurent polynomials in $\cL$. Moreover, a reasonable task is to seek
filters with as  small mask supports as possible.

The rest of the paper is organized as follows. 
In Section~\ref{prewalg} we review the algebraic formulation of the conditions on multivariate prewavelets as a basis 
extension in  $\cL$ considered as a module of absolutely convergent Laurent series.
Section~\ref{prewBox} recalls the basic facts about multiresolution analysis generated by box splines. 
Section~\ref{Laur_ext} is devoted to new method of basis construction for the kernel of a linear form on 
the module $\cL$, while Section~\ref{multilevel} presents a related multilevel module construction. 
Finally, in Section~\ref{Section 7} we apply our algorithms to generate compactly supported prewavelets with significanly
fewer nonzero filter coefficients than those known in the literature for several  important box splines in 2D, as well as
a construction of  prewavelets for the piecewise linear box spline in 3D with a total of 155 nonzero filter
coefficients.

\section{Algebraic formulation of multivariate prewavelets}
\label{prewalg}

Under the condition that
 $H\in\cL(\ell_1)$, it follows from \eqref{refeqH} that
$$
\Lambda(V_{-1})=\cL_{\downarrow}(\ell_2)H,\quad\text{ with}\quad\cL_{\downarrow}(\ell_p):=\big\{F(z^2):F\in \cL(\ell_p)\big\}.$$
Then the mapping
$$
\Lambda_{\downarrow}:V_{-1}\to \cL_{\downarrow}(\ell_2),\quad \Lambda_{\downarrow}(f)=F\;\text{ such that }\;\hat f(u)=F(z)H(z)\hat \varphi(u),$$
is also an isomorphism of vector spaces.

For all $f,g\in V_0$, see e.g.~\cite{JM,CSW92}, we have
\begin{align}\label{inprodcon}
\begin{split}
\langle f,g\rangle_{L_2(\RR^d)}&=(2\pi)^d\int_{\RR^d}\hat f \overline{\hat g}\,dx
=(2\pi)^d\int_{\TT^d}\Phi\,\Lambda(f)\overline{\Lambda(g)}\,du\\
&=(2\pi)^d\int_{[0,\pi]^d}\langle \Lambda(f),\Lambda(g)\rangle_\Phi\,du,
\end{split}
\end{align}
where
\begin{equation}\label{inprod}
\langle F,G\rangle_\Phi(z):=\sum_{s\in E} \Phi\big((-1)^sz\big)F\big((-1)^sz\big)\overline{G\big((-1)^sz\big)},
\quad F,G\in \cL(\ell_2),
\end{equation}
$E=\{0,1\}^d$ is the set of vertices of the $d$-dimensional unit cube $[0,1]^d$, and 
$$
(-1)^s z:=((-1)^{s_1} z_1,\ldots,(-1)^{s_d}z_d),\qquad s\in\ZZ^d,\quad z\in\CC^d.$$

It is easy to see that for every $T\in \cL(\ell_2)$ the condition 
$$
T\big((-1)^sz\big)=T(z)\quad \text{for all}\quad s\in E$$
is equivalent to $T\in \cL_{\downarrow}(\ell_2)$. Given any pair  $F,G\in \cL(\ell_2)$, the function  
$\langle F,G\rangle_\Phi$ 
belongs to $L_1(\TT^d)$ 
because the product $\Phi F\overline{G}$  of a continuous function $\Phi$ and two square
integrable functions  $F,G$ is absolutely integrable on $\TT^d$. The function $\langle F,G\rangle_\Phi$ 
is $\pi$-periodic with respect to each component of $u$ and gives rise to a Fourier series with zero coefficients $a_k$ for 
$k\notin 2\ZZ^d$ that however does not have to converge.
If one of $F,G$ belongs to $\cL(\ell_1)$ and the other to
$\cL(\ell_2)$, then $\langle F,G\rangle_\Phi\in \cL_{\downarrow}(\ell_2)$. Moreover,
\begin{equation}\label{inprodval}
\langle F,G\rangle_\Phi\in \cL_{\downarrow}(\ell_1)\quad\text{for all}\quad F,G\in \cL(\ell_1).
\end{equation}
In addition, it follows that 
\begin{equation}\label{bilin}
\langle TF,G\rangle_\Phi=T\langle F,G\rangle_\Phi,
\end{equation}
as long as $F,G\in \cL(\ell_2)$, $T\in \cL_{\downarrow}(\ell_2)$ and at least one of $F,G,T$ belongs to $\cL(\ell_1)$.

In particular, since $H\in\cL(\ell_1)$, we obtain from \eqref{inprodcon},
$$
\langle f,g\rangle_{L_2(\RR^d)}=(2\pi)^d\int_{[0,\pi]^d}\Lambda_{\downarrow}(f)\,\big\langle H,\Lambda(g)\big\rangle_\Phi\,du,
\qquad f\in V_{-1},\quad g\in V_0,$$
and hence 
\begin{equation}\label{orthog}
g\in W_0\quad\Longleftrightarrow\quad \big\langle H,\Lambda(g)\big\rangle_\Phi=0.
\end{equation}

The Laurent series ring $\cL(\ell_1)$ is a module over its subring $\cL_{\downarrow}(\ell_1)$, and
the Laurent polynomial ring $\cL$ is a module over 
$$
\cL_{\downarrow}:=\big\{P(z^2):P\in \cL\big\}.$$
Note that $\cL(\ell_2)$ can be considered as a module over $\cL(\ell_1)$ or $\cL_{\downarrow}(\ell_1)$ 
because the product of a function in $L_2(\TT^d)$ with a continuous function on $\TT^d$ belongs to $L_2(\TT^d)$. 

In addition to \eqref{inprodval} and \eqref{bilin}, the form 
$\langle \cdot,\cdot\rangle_\Phi:\cL^M(\ell_1)\times \cL^M(\ell_1)\to \cL_{\downarrow}(\ell_1)$ satisfies 
$$
\langle F,G\rangle_\Phi=\overline{\langle G,F\rangle_\Phi}\quad\text{and}\quad\langle F,F\rangle_\Phi(z)\ge0,\quad z\in\TT^d,
$$
since $\Phi(z)$ is real and positive on $\TT^d$, which resembles the properties of an inner product.
We also note the following useful identity obtained by a standard calculation, see  for example \cite[Eq.~(2.2)]{CSW92},
\begin{equation} \label{Phiz2}
\langle H,H\rangle_\Phi(z)=\Phi(z^2).
\end{equation}

\begin{theorem}\label{charprew} 
Assume that $\varphi,H,\Phi$ satisfy \eqref{Riesz} and  \eqref{conds}.
Let $\psi_s\in V_0$, $s\in S$, be such that $H_s:=\Lambda(\psi_s)\in \cL(\ell_1)$. 
Then $\psi_s$ are prewavelets if and only if $|S|=2^d-1$,
\begin{equation}\label{modbasis}
\{H\}\cup\{H_s:s\in S\} \text{ is a basis for the  $\cL_{\downarrow}(\ell_1)$-module $\cL(\ell_1)$,}
\end{equation}
and 
\begin{equation}\label{orthogHs}
\big\langle H,H_s\big\rangle_\Phi=0,\quad s\in S.
\end{equation}
\end{theorem}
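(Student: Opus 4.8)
The plan is to prove the equivalence by translating the Riesz-basis property of the prewavelet system into module-theoretic statements via the isomorphisms $\Lambda$, $\Lambda_{\downarrow}$, and the identity \eqref{orthog}. First I would observe that $\psi_s\in W_0$ for each $s$ is, by \eqref{orthog}, exactly the condition \eqref{orthogHs}; so that part is essentially free and the real content is the equivalence of the Riesz-basis property with $|S|=2^d-1$ together with \eqref{modbasis}. The key structural fact I would exploit is that $V_0$ decomposes (non-orthogonally at the level of $L_2$, but cleanly at the level of masks) using the coset representatives $E=\{0,1\}^d$: a mask $F\in\cL(\ell_1)$ is determined by its $2^d$ ``polyphase components'' relative to the sublattice $2\ZZ^d$, and $\cL(\ell_1)$ is a free $\cL_{\downarrow}(\ell_1)$-module of rank $2^d$. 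Under this splitting $\Lambda_{\downarrow}(V_{-1})=\cL_{\downarrow}(\ell_1)H$ corresponds to the rank-one free submodule generated by $H$, which is a direct summand because $\langle H,H\rangle_\Phi(z)=\Phi(z^2)$ is bounded away from zero by \eqref{Phiz2} and \eqref{Riesz}, so $H$ is unimodular (part of a basis).

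Next I would argue the ``only if'' direction. If $\{\psi_s(\cdot-2k)\}$ is a Riesz basis for $W_0$, then since the even translates of the single function with mask $H$ form a Riesz basis for $V_{-1}$ (again by \eqref{Phiz2} and \eqref{Riesz}), and $V_0=V_{-1}\oplus W_0$, the combined system is a Riesz basis for $V_0$. Passing through $\Lambda$ and using \eqref{inprodcon} with the weight $\Phi$, a Riesz basis of even translates corresponds exactly to a basis of $\cL(\ell_1)$ as an $\cL_{\downarrow}(\ell_1)$-module, where the norm equivalence constants come from \eqref{Riesz}; comparing ranks over the field of fractions (or simply counting polyphase components) forces $1+|S|=2^d$, i.e.\ $|S|=2^d-1$, and yields \eqref{modbasis}. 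Membership $\psi_s\in W_0$ gives \eqref{orthogHs} via \eqref{orthog}.

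For the ``if'' direction, assume $|S|=2^d-1$, \eqref{modbasis}, and \eqref{orthogHs}. Then \eqref{modbasis} says $\{H\}\cup\{H_s:s\in S\}$ is a free $\cL_{\downarrow}(\ell_1)$-basis of $\cL(\ell_1)$, so by \eqref{inprodcon} the corresponding even translates form a Riesz basis of $V_0$ (the uniform invertibility needed for the lower Riesz bound follows because a module basis has a matrix of polyphase components that is invertible over $\cL_{\downarrow}(\ell_1)$, hence with determinant a unit, hence bounded away from zero on $\TT^d$, combined with the two-sided bound \eqref{Riesz} on $\Phi$). The even translates of $H$ span $V_{-1}$ and form a Riesz basis of it; \eqref{orthogHs} together with \eqref{orthog} puts every $\psi_s$ in $W_0$. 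Since $V_0=V_{-1}\oplus W_0$ and the $H$-part already accounts for $V_{-1}$, the remaining $\psi_s(\cdot-2k)$ must be a Riesz basis of the complementary space $W_0$.

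I expect the main obstacle to be the careful handling of the analytic side — specifically, showing that an \emph{algebraic} $\cL_{\downarrow}(\ell_1)$-module basis translates into a genuine \emph{Riesz} basis (with uniform upper and lower frame bounds) of the corresponding space of even translates, and conversely. This is where one must combine the boundedness of the determinant of the polyphase matrix (a unit in $\cL_{\downarrow}(\ell_1)$, hence continuous and nonvanishing on the compact torus, hence bounded below) with the two-sided estimate \eqref{Riesz} for $\Phi$; the purely algebraic rank count and the identity \eqref{orthog} are comparatively routine once this analytic dictionary is in place.
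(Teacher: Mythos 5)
Your proposal is correct and follows essentially the same route as the paper's proof: both reduce the prewavelet property to the statement that the combined even translates of $\varphi$ and the $\psi_s$ form a Riesz basis of $V_0$, decompose $\cL(\ell_1)$ into its $2^d$ polyphase components over $\cL_{\downarrow}(\ell_1)$, and identify the module-basis condition \eqref{modbasis} with nonsingularity of the polyphase matrix on $\TT^d$ (a step the paper delegates to a cited theorem of Jia and Micchelli). The one point you gloss over is the passage from ``the determinant of the polyphase matrix is nonvanishing on $\TT^d$'' back to ``the determinant is invertible in $\cL_{\downarrow}(\ell_1)$'', i.e.\ that its reciprocal again has absolutely summable Fourier coefficients --- this is exactly where the paper invokes Wiener's lemma, and your sketch should cite it too.
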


\begin{proof} Let $\varphi_r(x):=\varphi(x-r)$, $r\in E$. Then 
$\big\{\varphi_r(\cdot-2k):r\in E,\;k\in\ZZ^d\big\}$ is a Riesz basis for $V_0$ because it coincides with the set of all
integer translates of $\varphi$. The functions $\Lambda(\varphi_r)(z)=z^r$, $r\in E$, obviously form a basis for the 
$\cL_{\downarrow}(\ell_1)$-module $\cL(\ell_1)$. Hence there exist unique $H_{sr}\in \cL_{\downarrow}(\ell_1)$ such that
$$
H_s(z)=\sum_{r\in E}H_{sr}(z)z^r,\qquad  s\in\{0\}\cup S,$$
where $H_0:=H$. 

Since $W_0$ is the orthogonal complement of $V_{-1}$ in $V_0$ and in view of \eqref{orthog}, the set
\eqref{prewbasis} is a Riesz basis for $W_0$ if and only if \eqref{orthogHs} holds and the
 translates $\varphi(\cdot-2k)$, $\psi_s(\cdot-2k)$, $s\in S$, $k\in\ZZ^d$,
form a Riesz basis for $V_0$. According to
\cite[Theorem 4.3]{JM}, the latter conditon is satisfied if and only if the matrix
$$
\cA(z):=[H_{sr}(z)]_{s\in \{0\}\cup S,\;r\in E}$$
is square (that is, $|S|=2^d-1$) and nonsingular for all $z\in\TT^d$. In other words, 
$\det \cA(z)\in \cL_{\downarrow}(\ell_1)$
does not vanish in $\TT^d$. By Wiener's lemma  this means that 
$\det \cA(z)$ is invertible in the ring $\cL_{\downarrow}(\ell_1)$, which in turn is equivalent to \eqref{modbasis}.
\end{proof}

Assume that $\{H\}\cup\{P_s:s\in S\}$, with $|S|=2^d-1$, is a basis for the $\cL_{\downarrow}(\ell_1)$-module $\cL(\ell_1)$.
Then $H_s\in \cL(\ell_1)$ satisfying \eqref{modbasis} and \eqref{orthogHs} may be found by the orthogonalization formula 
\begin{equation}\label{JMorthog}
H_s=\big\langle P_s,H\big\rangle_\Phi H-\big\langle H,H\big\rangle_\Phi P_s
\end{equation}
as suggested in \cite[Section 6]{JM}. Indeed, in this case $\big\langle H,H_s\big\rangle_\Phi=0$ follows from \eqref{bilin}
because $\big\langle P_s,H\big\rangle_\Phi$ and $\big\langle H,H\big\rangle_\Phi$ belong to $\cL_\downarrow(\ell_1)$.
This gives rise to prewavelets $\psi_s$ determined by the generators $\Lambda(\psi_s)=H_s$.
Note that the existence of $P_s$, $s\in S$, with the above basis property is equivalent to the \emph{extensibility} of the
$2^d$-tuple $[H_{0r}]_{r\in E}$ in the sense of \cite{JM}, with the resulting nonsingular matrix of $P_{sr}\in \cL_{\downarrow}(\ell_1)$,
$s\in \{0\}\cup S$, $r\in E$, determined by the expansions $P_s(z)=\sum_{r\in E}P_{sr}(z)z^r$. 

Furthermore, by a Gram-Schmidt orthogonalization procedure as in the proof of \cite[Theorem~4.4]{JM}
we may obtain $H_s$ satisfying $\big\langle H_s,H_r\big\rangle_\Phi=0$ for all $r\ne s$, which in view of \eqref{inprodcon} 
means that the subspaces of $W_0$ generated by the translates of $\psi_s$ are mutually orthogonal. However, this does not
guarantee that the translates in \eqref{prewbasis} are orthogonal between them. 

Scaling functions and prewavelets with compact support are particularly useful for applications, especially when $H$ and 
$H_s$ are Laurent polynomials, since in this case only finitely many nonzero coefficients are needed for the 
wavelet reconstruction transformation. We formulate here 
a technical result needed in the proof of the existence of 
compactly supported  prewavelets \cite{JM} based on the Quillen-Suslin theorem, see Section~\ref{Laur_ext}.

\begin{lemma}\label{linindep}
Let the scaling function $\varphi\in L_2(\RR^d)$ be compactly supported. Assume that the integer translates 
of $\varphi$ are linearly independent, that is, for each finite $M\subset \ZZ^d$ the set of functions
$\varphi(\cdot-k)$, $k\in M$, is  linearly independent. Then $H\in\cL$, and the polynomials $H_{0r}\in\cL_\downarrow$ in the
expansion $H(z)=\sum_{r\in E}H_{0r}(z)z^r$ do not have common zeros in $(\CC\setminus\{0\})^d$.
\end{lemma}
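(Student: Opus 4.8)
The plan is to translate both conclusions into statements about the Fourier transform $\hat\varphi$ and exploit the linear independence of the integer translates of $\varphi$, which by a classical result (Ron, Jia–Micchelli, Lemarié–Rieutort) is equivalent to the statement that $\hat\varphi$ has no \emph{periodic zero}, i.e. there is no $u\in\CC^d$ with $\hat\varphi(u+2\pi k)=0$ for all $k\in\ZZ^d$. First I would establish $H\in\cL$: since $\varphi$ is compactly supported, the refinement equation \eqref{refeq} can only hold with finitely many nonzero $h_k$ — otherwise the support of the right-hand side would be unbounded while the left-hand side is bounded — and a short support-comparison argument (or the fact that $\hat\varphi(2u)/\hat\varphi(u)$ is, where defined, a trigonometric \emph{polynomial}) gives $H\in\cL$. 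This also forces $H_{0r}\in\cL_\downarrow$ to be Laurent polynomials, since they are obtained from $H$ by the coset decomposition $H(z)=\sum_{r\in E}H_{0r}(z)z^r$, which just sorts the finitely many coefficients $h_k$ by the parity class of $k$.

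For the main assertion, suppose for contradiction that the $H_{0r}$, $r\in E$, have a common zero $w\in(\CC\setminus\{0\})^d$. Using $H_{0r}(z)=H_{0r}(z^2)$ in the sense that $H_{0r}\in\cL_\downarrow$ is a polynomial in $z^2$, pick $\zeta\in(\CC\setminus\{0\})^d$ with $\zeta^2=w$; then from the decomposition $H(z)=\sum_{r\in E}H_{0r}(z^2)z^r$ we get $H((-1)^s\zeta)=\sum_{r\in E}H_{0r}(w)((-1)^s\zeta)^r=0$ for \emph{every} $s\in E$, because flipping signs of the components of $\zeta$ does not change $\zeta^2=w$ and hence does not change the arguments $H_{0r}(w)$, which are all zero. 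So $H$ vanishes at all $2^d$ points $(-1)^s\zeta$, $s\in E$. Next I would use the refinement relation in the form $\hat\varphi(2u)=H(e^{-iu})\hat\varphi(u)$, extended analytically: writing $\zeta=e^{-iv}$ for a suitable $v\in\CC^d$, the $2^d$ points $(-1)^s\zeta$ correspond to the arguments $v+\pi s$, $s\in E$. Thus $\hat\varphi(2v+2\pi s)=H(e^{-i(v+\pi s)})\hat\varphi(v+\pi s)=0$ for all $s\in E$. Iterating the refinement equation (or rather using that $\hat\varphi(u)=\prod_{j\ge1}H(e^{-iu/2^j})$ converges to an entire function of exponential type since $\varphi$ is compactly supported, so the zero set is governed by the factors $H$), one propagates the vanishing of $\hat\varphi$ to all points of the form $2v+2\pi k$, $k\in\ZZ^d$: indeed at the first step $\hat\varphi$ vanishes on the full coset $2v+2\pi\ZZ^d$ modulo checking the finer lattice, and a clean way is to observe that the set of $u$ for which $\hat\varphi(u+2\pi k)=0$ for all $k$ is exactly where the "bracket" $\sum_k|\hat\varphi(u+2\pi k)|^2$ — which is the analytic continuation of $\Phi$ — vanishes, and $\Phi(z^2)=\langle H,H\rangle_\Phi(z)$ by \eqref{Phiz2}, so $\Phi(w)=\langle H,H\rangle_\Phi(\zeta)=\sum_{s\in E}\Phi((-1)^s\zeta)|H((-1)^s\zeta)|^2$; on $\TT^d$ this is genuinely the identity, but its algebraic (Laurent-polynomial) version $\Phi(z^2)=\sum_{s\in E}\Phi((-1)^sz)H((-1)^sz)H^*((-1)^sz)$ holds identically, where $H^*$ denotes the coefficient-reversed polynomial, and evaluating at $z=\zeta$ gives $\Phi(w)=0$. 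Then $\Phi(w)=0$ means $\hat\varphi(v'+2\pi k)=0$ for all $k$, where $w=e^{-2iv'}$, contradicting the no-periodic-zero characterization of linear independence.

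The main obstacle I expect is making the analytic continuation step rigorous: $\Phi$ and $\langle H,H\rangle_\Phi$ are a priori defined only on $\TT^d$, and $\Phi$ need not even be a Laurent polynomial (it is only in $\cL(\ell_1)$), so one cannot simply "plug in" $w\notin\TT^d$. The way around this is to keep everything on the polynomial side: $H$ is a Laurent polynomial, so the identity $H(z^2)\text{-coset decomposition}$ and the relation \eqref{Phiz2} in the form $\Phi(z^2)=\sum_{s\in E}\Phi((-1)^sz)H((-1)^sz)\overline{H}((-1)^sz)$ — valid on $\TT^d$ — is an identity between continuous functions that forces a corresponding algebraic identity after replacing $\Phi$ by its autocorrelation \emph{sequence}; but since $\Phi$ is not polynomial one instead argues directly with $\hat\varphi$: $\hat\varphi$ extends to an entire function (Paley–Wiener), $H$ is a Laurent polynomial, and from $\hat\varphi(2v+2\pi s)=0$ for all $s\in E$ one shows, using $\hat\varphi(2u)=H(e^{-iu})\hat\varphi(u)$ at the doubled arguments $u=2v+2\pi s$, that $\hat\varphi(4v+2\pi s')=0$ for all $s'\in\{0,1,2,3\}^d$, and inductively $\hat\varphi$ vanishes on $2^{m}v+2\pi\ZZ^d\cap(2^{m-1}$-grid$)$; passing to the limit and using continuity of $\hat\varphi$ one gets $\hat\varphi$ vanishing on a coset $2v_\infty+2\pi\ZZ^d$, i.e. a periodic zero — the contradiction. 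This inductive "spreading of zeros" argument, together with the Paley–Wiener extension, is the technical heart; the algebraic parity bookkeeping at the start and the invocation of the linear-independence characterization at the end are routine.
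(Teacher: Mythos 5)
The paper does not actually prove this lemma; it simply cites \cite[Theorem~8.1]{JM}. Your overall strategy --- translate linear independence into the ``no periodic zero of $\hat\varphi$ in $\CC^d$'' criterion, extend $\hat\varphi$ to an entire function by Paley--Wiener, and feed a common zero of the $H_{0r}$ into the refinement relation $\hat\varphi(2u)=H(e^{-iu})\hat\varphi(u)$ --- is exactly the right one, and your derivation that a common zero $w=\zeta^2$ forces $H((-1)^s\zeta)=0$ for all $s\in E$ is correct. The problem is the closing step. You believe that $\hat\varphi(2v+2\pi s)=0$ for $s\in E$ must be ``propagated'' to the full coset $2v+2\pi\ZZ^d$, and you offer two completions, neither of which works. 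The route through $\Phi$ is genuinely false off the torus: the identity $\Phi(e^{-iu})=\sum_k|\hat\varphi(u+2\pi k)|^2$ holds only for real $u$; the analytic continuation of the left-hand side is $\sum_k\hat\varphi(u+2\pi k)\,\overline{\hat\varphi(\bar u+2\pi k)}$, which is a sum of products of values at \emph{different} points, so $\Phi(w)=0$ at a point $w\notin\TT^d$ does not force any single $\hat\varphi(v'+2\pi k)$, let alone all of them, to vanish. The alternative ``spreading of zeros and passing to the limit $v_\infty$'' sketch is not an argument either: the points $2^m v$ do not converge and no compactness or normal-families reasoning is supplied.

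The gap is closed by a one-line observation you missed: for an \emph{arbitrary} $k\in\ZZ^d$ one has $e^{-i(v+\pi k)}=(-1)^k\zeta=(-1)^{s}\zeta$ with $s=k\bmod 2\in E$, because $(-1)^{k_j}$ depends only on the parity of $k_j$. Hence the (analytically continued) refinement relation gives $\hat\varphi(2v+2\pi k)=H\big((-1)^{s}\zeta\big)\hat\varphi(v+\pi k)=0$ for \emph{every} $k\in\ZZ^d$ in one step, so $2v$ is already a periodic zero of $\hat\varphi$, contradicting linear independence; no iteration and no appeal to $\Phi$ are needed. One further small caveat: your justification that only finitely many $h_k$ are nonzero (``otherwise the support of the right-hand side would be unbounded'') ignores possible cancellation among infinitely many translates; the correct justification is precisely the linear-independence hypothesis (a compactly supported element of the shift-invariant space generated by $\varphi$ then necessarily has a finitely supported coefficient sequence), which you have available but do not actually invoke at that point.
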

\noindent 
For a proof, see \cite[Theorem~8.1]{JM}.

\section{Box spline prewavelets}
\label{prewBox} 

Let $\Xi=[\xi_1,\ldots,\xi_\ell]\in\ZZ^{d\times \ell}$, $\ell\ge d$, be a full rank matrix with nonzero columns $\xi_j$.
The box spline $M_\Xi$ associated with $\Xi$ is a non-negative function on $\RR^d$ that may be defined by its Fourier
transform
\cite{BHR93}
$$
\widehat M_\Xi(u):=\prod_{j=1}^\ell\frac{1-e^{-i\xi_{j}^Tu}}{i \xi_{j}^T\!u},
\qquad u\in\RR^d. $$%
Hence the refinement equation \eqref{refeqH} is satisfied for $\varphi=M_\Xi$ with
\begin{equation}\label{Hbox}
H(z)=2^{-\ell}\prod_{j=1}^\ell(1+z^{\xi_{j}}).
\end{equation}

It is easy to check that $\tilde\varphi =M_\Xi*M_\Xi(-\cdot)$ satisfies
$$
\tilde\varphi(x)=M_{\Xi\cup \Xi}(x+c_\Xi),\qquad c_\Xi=\sum_{j=1}^\ell \xi_j,$$
and hence $\tilde\varphi$ coincides with the \emph{centered box spline} $M_{\Xi\cup \Xi}^c$,
see \cite[p.~10--11]{BHR93}. Thus, the function $\Phi$ of \eqref{Phi} is given by
\begin{equation}\label{Phibox}
\Phi(z)=\sum_{k\in\ZZ^d}M_{\Xi\cup \Xi}^c(k)\,z^k.
\end{equation}

According to \cite{DM83}, the Riesz condition \eqref{Riesz} holds and hence $\varphi=M_\Xi$ generates a multiresolution
analysis if and only if the matrix $\Xi$ is \emph{unimodular}, that is $|\det X|=1$ for every non-singular $d\times d$
submatrix $X$ of $\Xi$. Moreover, the integer translates of $M_\Xi$  are linearly independent if and only if
 $\Xi$ is unimodular.

Notice that both $H(z)$ and $\Phi(z)$ are polynomials with positive coefficients. 

In what follows we will look for Laurent polynomials $H_s\in \cL$, $s\in E\setminus\{0\}$,  satisfying \eqref{modbasis} and 
\eqref{orthogHs}, such that the prewavelets $\psi_s$ given by $\Lambda(\psi_s)=H_s$  are compactly supported. 

If the polynomials $H$ and $\Phi$ are invariant with respect to arbitrary permutations of the variables $z_1,\ldots,z_d$,
then polynomials $H_s$ will split into subsets generated from a single polynomial by all possible permutations
of the variables, which is a useful property for applications. We now identify box splines satisfying this property.

\begin{lemma}\label{permbox} 
Suppose that for every permutation of columns of $\Xi$ there is a permutation of its rows such that  $\Xi$ is unchanged after
applying both permutations. Then both  $H$ and $\Phi$ are invariant with respect to the permutations of the variables 
$z_1,\ldots,z_d$.
\end{lemma}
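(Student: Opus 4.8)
The plan is to translate the combinatorial hypothesis on $\Xi$ into a statement about how permutations of the variables $z_1,\dots,z_d$ act on the building blocks $1+z^{\xi_j}$ of $H$ and on the lattice points carrying $\Phi$. First I would fix a permutation $\sigma\in S_d$ of the rows (equivalently, of the coordinate directions) and let $z\mapsto \sigma z$ denote the induced permutation of the variables, so that $(\sigma z)^k = z^{\sigma^{-1}k}$ for $k\in\ZZ^d$, where $\sigma$ acts on $\ZZ^d$ by permuting coordinates. The hypothesis says that for the column permutation $\tau\in S_\ell$ matching $\sigma$ we have $\xi_{\tau(j)} = \sigma\,\xi_j$ for all $j$ (the matrix is unchanged after applying both permutations, i.e. row-permuting by $\sigma$ and column-permuting by $\tau$ returns $\Xi$). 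Wait — I should double check the direction: "unchanged after applying both permutations" means $\sigma\Xi P_\tau = \Xi$ for the column permutation matrix $P_\tau$, which rearranges columns so that column $j$ of $\sigma\Xi P_\tau$ is $\sigma\xi_{\tau(j)}$; setting this equal to $\xi_j$ gives $\sigma\xi_{\tau(j)} = \xi_j$, equivalently $\xi_{\tau(j)} = \sigma^{-1}\xi_j$. Either way, the key point is that $\{\sigma\xi_j : j\}$ is a permutation of $\{\xi_j : j\}$ as a multiset.

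For $H$: from \eqref{Hbox}, $H(\sigma z) = 2^{-\ell}\prod_{j=1}^\ell (1 + (\sigma z)^{\xi_j}) = 2^{-\ell}\prod_{j=1}^\ell (1 + z^{\sigma^{-1}\xi_j})$, and since $j\mapsto \sigma^{-1}\xi_j$ merely reindexes the multiset of exponent vectors by the hypothesis, this product equals $\prod_{j=1}^\ell (1+z^{\xi_j})$, so $H(\sigma z) = H(z)$. For $\Phi$: I would argue that the same row-permutation symmetry of $\Xi$ is inherited by the doubled matrix $\Xi\cup\Xi$ (apply $\sigma$ to rows and $\tau$ twice, once to each copy, to columns), hence the centered box spline $M^c_{\Xi\cup\Xi}$ satisfies $M^c_{\Xi\cup\Xi}(\sigma x) = M^c_{\Xi\cup\Xi}(x)$ for all $x\in\RR^d$ — this follows from the Fourier transform formula $\widehat M_\Xi(u) = \prod_j \frac{1-e^{-i\xi_j^Tu}}{i\xi_j^Tu}$, which is visibly invariant under $u\mapsto \sigma^{-T}u$ combined with the column reindexing. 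Then in \eqref{Phibox}, $\Phi(\sigma z) = \sum_{k\in\ZZ^d} M^c_{\Xi\cup\Xi}(k)\,z^{\sigma^{-1}k} = \sum_{k'\in\ZZ^d} M^c_{\Xi\cup\Xi}(\sigma k')\,z^{k'} = \sum_{k'} M^c_{\Xi\cup\Xi}(k')\,z^{k'} = \Phi(z)$, using the substitution $k' = \sigma^{-1}k$ and the invariance of $M^c_{\Xi\cup\Xi}$ under $\sigma$ (noting $\sigma$ preserves $\ZZ^d$).

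Since an arbitrary permutation of the variables is a composition of the transpositions covered by the hypothesis (or one may just run the argument for a general $\sigma$ directly, as above), invariance under the full symmetric group follows. The main obstacle, and the only place requiring genuine care rather than bookkeeping, is the claim that the centered box spline $M^c_{\Xi\cup\Xi}$ is invariant under the coordinate permutation $\sigma$: one must verify that passing from $\Xi$ to $\Xi\cup\Xi$ and from the box spline to its centered version does not destroy the symmetry, and that the centering shift $c_{\Xi\cup\Xi} = 2c_\Xi$ is itself fixed by $\sigma$ (which it is, since $\sigma$ permutes the $\xi_j$ among themselves and hence fixes their sum). I would handle this cleanly on the Fourier side: $\widehat{M^c_{\Xi\cup\Xi}}(u) = \prod_{j=1}^\ell \left(\frac{1-e^{-i\xi_j^Tu}}{i\xi_j^Tu}\right)^2 e^{i c_\Xi^T u}$, after the substitution $u\mapsto \sigma^{-T}u$ the factors are permuted and $c_\Xi^T\sigma^{-T}u = (\sigma^{-1}c_\Xi)^Tu = c_\Xi^Tu$, giving $\widehat{M^c_{\Xi\cup\Xi}}(\sigma^{-T}u) = \widehat{M^c_{\Xi\cup\Xi}}(u)$, i.e. $M^c_{\Xi\cup\Xi}\circ\sigma = M^c_{\Xi\cup\Xi}$, which is exactly what the $\Phi$ computation needs.
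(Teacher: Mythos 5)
Your proof is correct and is essentially the paper's own argument with the details filled in: the paper's entire proof reads ``This follows immediately from \eqref{Hbox} and \eqref{Phibox}'', i.e.\ a variable permutation merely permutes the factors $1+z^{\xi_j}$ of $H$ and the lattice values of the permutation-invariant centered box spline in $\Phi$, which is exactly what you verify. Two small remarks: your reading of the hypothesis as ``for every row permutation there is a matching column permutation'' is the one the paper actually needs (the literal quantifier order in the statement would already fail for the three-direction matrix $\Xi=[\,(1,0)^T,(0,1)^T,(1,1)^T\,]$ to which the lemma is applied, so your ``either way'' shrug glosses over the fact that only this reading yields the multiset identity $\{\sigma\xi_j\}=\{\xi_j\}$ for \emph{all} $\sigma$), and your Fourier-side check that $M^c_{\Xi\cup\Xi}$ is symmetric, including that the centering shift $c_\Xi$ is fixed by $\sigma$, is a worthwhile detail the paper omits.
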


\begin{proof}
This follows immediately from \eqref{Hbox} and \eqref{Phibox}.
\end{proof}

\section{Basis extension in a Laurent polynomial module}\label{Laur_ext}

If the scaling function $\varphi\in L_2(\RR^d)$ is compactly supported and its integer translates are linearly independent,
then \eqref{conds} is satisfied, and both $H$ and $\Phi$ are Laurent polynomials in $\cL$. 
Condition \eqref{Riesz} is also satisfied in this case, see \cite[Section 5]{JM}.

As shown in \cite{JM}, under these conditions the existence of an extension of $H$ to a basis  $\{H\}\cup\{P_s:s\in S\}$
for $\cL(\ell_1)$ over $\cL_{\downarrow}(\ell_1)$, where $P_s$ are polynomials in $\cL$, can be proved by using the
Quillen-Suslin theorem. We now sketch this proof in matrix-free terms.  
Using Lemma~\ref{linindep} and Hilbert's Nullstellensatz it is easy to show that
$\{H_{0r}:r\in E\}$ is a \emph{unimodular row} in $\cL_{\downarrow}$, that is, there exist 
$Q_r\in \cL_{\downarrow}$, $r\in E$, such that $\sum_{r\in E}H_{0r}Q_r=1$. 
It follows that the cyclic submodule $\cL_\downarrow H$ is a direct summand of $\cL$, namely 
$$
\cL=\cL_\downarrow H\oplus\cP,\quad\text{where}\quad
\cP:=\big\{P\in\cL:\sum_{r\in E}P_{r}Q_r=0,\;\text{with}\; P(z)=\sum_{r\in E}P_{r}(z)z^r,\; P_{r}\in\cL_\downarrow\big\}.$$
As a direct summand of a free module $\cL$, the submodule $\cP$ is projective, and since $\cL$ is finitely generated, so is
$\cP$. Hence, by the Quillen-Suslin theorem, $\cP$ is free. Clearly, its rank is $|E|-1=2^d-1$. Therefore that exists a basis
$P_s$, $s\in S$, with $|S|=2^d-1$, that gives the required extension of $H$.

Hence, basis polynomials $H_s\in\cL$ satisfying \eqref{modbasis} and \eqref{orthogHs} can be computed by applying a
constructive version of the Quillen-Suslin theorem (see e.g.~\cite{Park04}) and orthogonalizing according to \eqref{JMorthog}. 

Alternatively, for box splines in dimensions at most 3, an algorithm for the generation of the basis polynomials
$H_s$ has been suggested in \cite{RS}, and modified in \cite{BDG} to reduce the number of nonzero coefficients 
of $H_s$, which makes resulting prewavelets more efficient in application.

We suggest below an alternative method that leads to even fewer nonzero coefficients of $H_s$ for the particular box spline
prewavelets considered in the literature.

\subsection{Bases for the kernel of a linear form} %

Since $\cL_{\downarrow}$ is a subring of $\cL$ and  $\cL_{\downarrow}(\ell_1)$ is a subring of $\cL(\ell_1)$, $\cL$ may be
considered as an ${\cL_{\downarrow}}$-module and $\cL({\ell_1})$ may be considered as an
${\cL_{\downarrow}}({\ell_1})$-module. It is well known that
\begin{equation}
  \cL (\ell_1)=\oplus_{j \in E}\cL_{\downarrow}(\ell_1)z^j, \label{Eq 2.1}
\end{equation}
where $E = {\left\{ {0,\,1} \right\}^d}$. So, each element $A$ of $\cL$ (of $\cL(\ell_1)$) may be uniquely presented in the
form $A(z) = \sum\nolimits_{j \in E} {{a_j}{z^j}} $, where ${a_j} \in {\cL_{\downarrow}}$ (${a_j} \in
{\cL_{\downarrow}}({\ell_1})$). Then the mapping $\rho :A \mapsto {a_0}$ is linear over ${\cL_{\downarrow}}(\ell_1)$
and maps $\cL(\ell_1)$ onto ${\cL_{\downarrow}}({\ell_1})$. The restriction of $\rho :A(z) \mapsto {a_0}$ to $\cL$ is linear
over ${\cL_{\downarrow}}$ and maps $\cL$ onto ${\cL_{\downarrow}}$. Note that
\begin{equation}\label{rho}
\rho (A(z))=2^{-d}\sum_{s\in E} A\big((-1)^sz\big).
\end{equation}

Let $D(z)$ be a fixed polynomial from $\cL$. Let ${\rho_D} $ and $\tau_D $  
be  mappings of  $ \cL({\ell_1})  $  onto $ {\cL_{\downarrow}}({\ell_1}) $  given by 
$${\rho_D}:F(z) \mapsto \rho (D(z)  \overline {F(z)} )\quad \text{and} \quad
{\tau_D}:F(z) \mapsto \rho (\overline{D(z)}  {F(z)} ) $$ 
for any element $F(z)$ of $\cL({\ell_1})$.
Since the conjugation \eqref{conj} is an automorphism of $\cL({\ell_1})$, it is not difficult to show that 
${\rho_D} $ and $\tau_D $  are linear forms on the ${\cL_{\downarrow}}({\ell_1})$-module  $ \cL({\ell_1})$.

\begin{lemma}\label{l2.1}
In the above notations, $\Ker {\rho_D} = \Ker {\tau_D} $. 
\end{lemma}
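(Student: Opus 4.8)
The plan is to show the two kernels coincide by exhibiting a natural bijective correspondence between elements of one and the other, exploiting the conjugation automorphism together with the interaction between conjugation and the projection $\rho$. First I would record the key compatibility: for $z\in\TT^d$ we have $\overline{z^j}=z^{-j}$, and since $j\in E=\{0,1\}^d$, the map $j\mapsto -j$ sends $E$ into $-E$, with $z^{-j}=z^{-2j}z^{j}$ and $z^{-2j}\in\cL_\downarrow$. Hence conjugation maps the direct sum decomposition \eqref{Eq 2.1} to itself in a controlled way, and in particular $\rho(\overline{A(z)})=\overline{\rho(A(z))}$ does \emph{not} hold verbatim, so I must instead use the explicit averaging formula \eqref{rho}: $\rho(A(z))=2^{-d}\sum_{s\in E}A((-1)^sz)$, for which the identity $\overline{\rho(A(z))}=\rho(\overline{A(z)})$ \emph{is} immediate because conjugation \eqref{conj} commutes with the substitutions $z\mapsto(-1)^sz$ (as $(-1)^{s_i}$ is real) and with finite sums.

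Next I would compute $\overline{\rho_D(F)}$ and $\overline{\tau_D(F)}$ directly. Using the averaging formula and the observation just made,
\[
\overline{\rho_D(F)} = \overline{\rho\big(D\,\overline F\big)} = \rho\big(\overline{D\,\overline F}\big) = \rho\big(\overline D\, F\big) = \tau_D(F),
\]
where the middle step uses that conjugation is a ring automorphism of $\cL(\ell_1)$ (so $\overline{D\,\overline F}=\overline D\cdot\overline{\overline F}=\overline D\,F$) together with $\overline{\rho(\cdot)}=\rho(\overline{\,\cdot\,})$. Therefore $\tau_D(F)=\overline{\rho_D(F)}$ for every $F\in\cL(\ell_1)$.

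Finally, since conjugation is injective (indeed an automorphism), $\tau_D(F)=0$ if and only if $\overline{\rho_D(F)}=0$ if and only if $\rho_D(F)=0$. This gives $\Ker\tau_D=\Ker\rho_D$ and completes the argument. I do not expect a serious obstacle here; the only point requiring a little care is justifying $\overline{\rho(A)}=\rho(\overline A)$, which is why I would route the proof through the explicit formula \eqref{rho} rather than through the abstract description of $\rho$ as ``take the $z^0$-component of the expansion in \eqref{Eq 2.1}'' — the latter would force me to track how conjugation permutes the index set $E$, which is messier than necessary.
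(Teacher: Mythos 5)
Your proposal is correct and follows essentially the same route as the paper: both establish the identity $\overline{\rho_D(F)}=\rho\big(\overline{D\,\overline F}\big)=\rho(\overline D\,F)=\tau_D(F)$ and conclude by injectivity of conjugation. The only difference is that you explicitly justify $\overline{\rho(A)}=\rho(\overline A)$ via the averaging formula \eqref{rho}, a step the paper leaves as ``not difficult to note.''
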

\begin{proof} 
It is not difficult to note that $ \overline{ {\rho_D}(F(z))} =  
\overline{ {\rho}(D(z)\overline{F(z)})} = {\rho}(\overline{ D(z)\overline{ F(z)}}) = 
 {\rho}(\overline{ D(z)}F(z)) = {\tau_D}(F(z))$. Therefore, ${\rho_D}(F(z)) = 0$  
 if and only if $ {\tau_D}(F(z)) = 0$ and the assertion follows. 

\end{proof}

\begin{lemma}\label{l2.2}
Suppose that polynomials $\{ {H_j}(z)|j \in E\setminus \{ 0\} \}  \subseteq \cL$ form a basis of $\Ker{\rho_D}$ over ${\cL_{\downarrow}}({\ell_1})$, and suppose that there is a polynomial $H(z) \in \cL$ such that ${\rho_D}(H(z))$ is invertible in ${\cL_{\downarrow}}({\ell_1})$. Then polynomials $\{ {H_j}(z)|j \in E\setminus \{ 0\} \} $ and $H(z)$ form a basis of $\cL({\ell_1})$ over ${\cL_{\downarrow}}({\ell_1})$. 
\end{lemma}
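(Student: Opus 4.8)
The goal is to show that $\mathcal B := \{H\} \cup \{H_j : j \in E\setminus\{0\}\}$ is a basis for the $\cL_{\downarrow}(\ell_1)$-module $\cL(\ell_1)$, given that $\{H_j : j \in E\setminus\{0\}\}$ is a basis for $\Ker\rho_D$ and that $\rho_D(H)$ is invertible in $\cL_{\downarrow}(\ell_1)$. Since $\cL(\ell_1)$ is free of rank $2^d = |E|$ over $\cL_{\downarrow}(\ell_1)$ by \eqref{Eq 2.1}, and $\mathcal B$ has exactly $2^d$ elements, it suffices to prove that $\mathcal B$ generates $\cL(\ell_1)$; linear independence will then follow, but I would in fact establish both spanning and independence directly rather than invoking invariance of rank, to keep the argument self-contained.

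First I would show $\Ker\rho_D$ is a direct complement of the cyclic submodule $\cL_{\downarrow}(\ell_1)H$. Given any $F \in \cL(\ell_1)$, set $c := \rho_D(H)^{-1}\rho_D(F) \in \cL_{\downarrow}(\ell_1)$, which makes sense precisely because $\rho_D(H)$ is invertible. Then $\rho_D(F - cH) = \rho_D(F) - c\,\rho_D(H) = 0$ since $\rho_D$ is $\cL_{\downarrow}(\ell_1)$-linear and $c \in \cL_{\downarrow}(\ell_1)$; hence $F - cH \in \Ker\rho_D$, giving the decomposition $F = cH + (F - cH)$. This shows $\cL(\ell_1) = \cL_{\downarrow}(\ell_1)H + \Ker\rho_D$. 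For the sum to be direct, suppose $cH \in \Ker\rho_D$ with $c \in \cL_{\downarrow}(\ell_1)$; then $0 = \rho_D(cH) = c\,\rho_D(H)$, and multiplying by $\rho_D(H)^{-1}$ forces $c = 0$. So $\cL(\ell_1) = \cL_{\downarrow}(\ell_1)H \oplus \Ker\rho_D$.

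Next I would combine this with the hypothesis that $\{H_j : j \in E\setminus\{0\}\}$ is a basis of $\Ker\rho_D$. Spanning: any $F$ decomposes as above with $F - cH = \sum_{j} c_j H_j$ for suitable $c_j \in \cL_{\downarrow}(\ell_1)$, so $F = cH + \sum_j c_j H_j$, and $\mathcal B$ spans. Independence: suppose $c_0 H + \sum_j c_j H_j = 0$ with coefficients in $\cL_{\downarrow}(\ell_1)$. Since $\sum_j c_j H_j \in \Ker\rho_D$, applying $\rho_D$ gives $c_0\,\rho_D(H) = 0$, hence $c_0 = 0$ by invertibility; then $\sum_j c_j H_j = 0$ forces all $c_j = 0$ by the assumed independence of the $H_j$ in $\Ker\rho_D$. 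Therefore $\mathcal B$ is a basis.

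\textbf{Main obstacle.} The algebraic skeleton above is routine once one knows $\rho_D$ is a well-defined $\cL_{\downarrow}(\ell_1)$-linear form onto $\cL_{\downarrow}(\ell_1)$ — which the paragraph preceding Lemma~\ref{l2.1} already grants. The only point needing slight care is making sure all the coefficients that arise (in particular $c = \rho_D(H)^{-1}\rho_D(F)$) genuinely lie in $\cL_{\downarrow}(\ell_1)$ rather than merely in some larger ring: this is exactly why the invertibility is required to hold \emph{in} $\cL_{\downarrow}(\ell_1)$ and not just pointwise on $\TT^d$, and it is the hypothesis that does the real work. Beyond that, the proof is a standard "complement of a cyclic summand" argument and I would not expect any genuine difficulty.
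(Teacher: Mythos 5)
Your proof is correct and follows essentially the same route as the paper: both establish the direct sum decomposition $\cL(\ell_1)=\cL_{\downarrow}(\ell_1)H\oplus\Ker\rho_D$ by using the invertibility of $\rho_D(H)$ to produce the coefficient of $H$ and to kill the intersection, and then append the given basis of $\Ker\rho_D$. The only difference is cosmetic — you spell out the linear independence of the combined set, which the paper leaves implicit.
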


\begin{proof} 
It follows from the definition of ${\rho_D}$ that if ${\rho_D}(H(z)   F(z)) = $ ${\rho_D}(H(z))   F(z) = 0$ for some
nonzero $F(z) \in {\cL_{\downarrow}}({\ell_1})$ then ${\rho_D}(H(z)) = 0$. But ${\rho_D}(H(z)) \ne 0$ because
${\rho_D}(H(z))$ is invertible in ${\cL_{\downarrow}}({\ell_1})$ and hence ${\rho_D}(H(z))   F(z) \ne 0$ for all nonzero
$F(z) \in {\cL_{\downarrow}}({\ell_1})$. It implies that  $H(z)   {\cL_{\downarrow}}({\ell_1}) \cap \Ker{\rho_D} = 0$ and
hence the sum $H(z)   {\cL_{\downarrow}}({\ell_1}) + \Ker{\rho_D} = $ $H(z)   {\cL_{\downarrow}}({\ell_1}) \oplus
\Ker{\rho_D}$ is direct. Thus, it is sufficient to show that $H(z)   {\cL_{\downarrow}}({\ell_1}) + \Ker{\rho_D} =
\cL({\ell_1})$. Let $E(z) \in \cL({\ell_1})$, since the element ${\rho_D}(H(z))$ is invertible in
${\cL_{\downarrow}}({\ell_1})$, there is an element $F(z) \in {\cL_{\downarrow}}({\ell_1})$ such that ${\rho_D}(H(z))   F(z)
= {\rho_D}(E(z))$. Therefore, ${\rho_D}(H(z)   F(z) - E(z)) = 0$ and hence $E(z) \in H(z)   {\cL_{\downarrow}}({\ell_1}) +
\Ker{\rho_D}$. So, we can conclude that $H(z)   {\cL_{\downarrow}}({\ell_1}) + \Ker{\rho_D} = \cL({\ell_1})$. 
\end{proof}

Let $ k \in {\mathbb{Z}}^d $, as the element $ z^k $ is invertible in $ \cL $, the 
multiplication by $ z^k $ induces an automorphism of the $\cL_{\downarrow}(\ell_1)$-module 
$ \cL(\ell_1)$ which permutes the direct summands  of  $\cL(\ell_1) =\oplus_{j \in E}\cL_{\downarrow}(\ell_1)z^j $.

\begin{proposition}\label{p2.1}
 Let $D(z) = \sum\nolimits_{j \in E} {{d_j}{z^{k_j}}}  \in \cL$, where ${d_j} \in {\cL_{\downarrow}}$, ${d_j}{z^{k_j}}  \in
 {\cL_{\downarrow}}{z^j}$ and ${k_j} \in {\mathbb{Z}}^d$.  Suppose that ${d_0} \ne 0$ is invertible in
 ${\cL_{\downarrow}}({\ell_1})$ (i.e. ${d_0} \ne 0$ has no zeros on the unit torus). Let ${D_j}(z) = {d_j} - {d_0}{z^{ -k_
 j}}$ and ${H_j}(z) = \overline {{D_j}(z)} = {\bar d_j} - {\bar d_0}{z^{k_j}} $, where $j \in E\setminus \{ 0\} $. Then:
  \begin{romanlist}[(i)]
\item	$\rho (D(z) {D_j}(z)) = \rho (\overline{D(z)} {H_j}(z)) = 0$ for any 
$j \in E\setminus \{ 0\}$;
\item	polynomials ${H_j}(z)$ are linearly independent over ${\cL_{\downarrow}}({\ell_1})$, 
where $j \in E\setminus \{ 0\} $; 
\item	elements ${H_j}(z)$ form a basis of $\Ker{\rho_D}$ over ${\cL_{\downarrow}}({\ell_1})$, 
where $j \in E\setminus \{ 0\} $;
\item	if $H(z) \in \cL$ is a polynomial  such that ${\rho_D}(H(z))$ is invertible in ${\cL_{\downarrow}}({\ell_1})$ then
polynomials $\{ {H_j}(z)|j \in E\setminus \{ 0\} \} $ and $H(z)$ form a basis of $\cL({\ell_1})$ over
${\cL_{\downarrow}}({\ell_1})$. \end{romanlist}
\end{proposition}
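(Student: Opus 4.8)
The plan is to prove the four items in order, using earlier lemmas of this subsection wherever possible and reducing the heart of the matter to a direct computation plus a linear-independence argument.

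\medskip

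\textbf{Item (i).} I would compute directly using \eqref{rho}. Write $D(z)=\sum_{j\in E}d_jz^{k_j}$ with $d_jz^{k_j}\in\cL_\downarrow z^j$, so that in particular $d_0z^{k_0}\in\cL_\downarrow z^0=\cL_\downarrow$; since the splitting \eqref{Eq 2.1} is a splitting into $\cL_\downarrow$-submodules indexed by residues mod $2\ZZ^d$, the factor $z^{k_0}$ is itself an even power, i.e. $z^{k_0}\in\cL_\downarrow$, so it is harmless to absorb it and one may simply take $k_0=0$ (the statement already writes $z^{-k_j}$ freely). Now $D_j(z)=d_j-d_0z^{-k_j}$. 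One checks that $D(z)D_j(z)$, when expanded in the basis $\{z^i:i\in E\}$, has \emph{zero} component in $\cL_\downarrow z^0$: the only contributions to the $z^0$-component come from the term $d_jz^{k_j}\cdot(-d_0z^{-k_j})=-d_0d_j$ and the term $d_0z^{k_0}\cdot d_j z^{k_j-k_0}$; wait — more carefully, $d_jz^{k_j}$ lies in $\cL_\downarrow z^j$ and $d_0$ lies in $\cL_\downarrow$, while $d_0z^{-k_j}$ lies in $\cL_\downarrow z^{j}$ as well (since $z^{-k_j}$ and $z^{k_j}$ have the same residue mod $2\ZZ^d$). Hence $d_jz^{k_j}\cdot d_j$ and $d_0 z^{k_0}\cdot d_0 z^{-k_j}$ both sit in $\cL_\downarrow z^j$, the cross terms $d_jz^{k_j}\cdot(-d_0z^{-k_j})=-d_jd_0$ and $d_0\cdot d_j = d_0d_j$ sit in $\cL_\downarrow$ and cancel, and $\rho$ annihilates everything in $\cL_\downarrow z^i$ for $i\ne0$. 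So $\rho(D(z)D_j(z))=0$. Applying the conjugation argument from Lemma~\ref{l2.1} (or conjugating the identity directly, since $H_j=\overline{D_j}$ and $\overline{D}$ is what appears in $\rho_D$) gives $\rho(\overline{D(z)}\,H_j(z))=\overline{\rho(D(z)D_j(z))}=0$. Thus $H_j\in\Ker\rho_D$.

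\medskip

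\textbf{Item (ii).} Linear independence of $\{H_j:j\in E\setminus\{0\}\}$ over $\cL_\downarrow(\ell_1)$. Suppose $\sum_{j\ne0}c_jH_j=0$ with $c_j\in\cL_\downarrow(\ell_1)$. Expand: $\sum_{j\ne0}c_j(\bar d_j-\bar d_0z^{k_j})=0$. Now group by the $E$-grading \eqref{Eq 2.1}. The term $c_j\bar d_j$ lies in $\cL_\downarrow(\ell_1)z^0$ for every $j$ (since $\bar d_j\in\cL_\downarrow(\ell_1)$), whereas $c_j\bar d_0z^{k_j}$ lies in $\cL_\downarrow(\ell_1)z^j$, and the $z^j$ are in \emph{distinct} nonzero residue classes for distinct $j\in E\setminus\{0\}$. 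Therefore the vanishing of the $z^j$-component forces $c_j\bar d_0=0$ for each $j\ne0$; since $\bar d_0$ is invertible in $\cL_\downarrow(\ell_1)$ (the conjugate of an invertible element, using that conjugation is a ring automorphism and that $d_0$ has no zeros on $\TT^d$), we get $c_j=0$. Hence the $H_j$ are linearly independent.

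\medskip

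\textbf{Item (iii).} By (i) we have $\spann_{\cL_\downarrow(\ell_1)}\{H_j\}\subseteq\Ker\rho_D$, and by (ii) this span is free of rank $2^d-1$. It remains to show the span is \emph{all} of $\Ker\rho_D$. Take $F\in\Ker\rho_D\subseteq\cL(\ell_1)$ and write $F=\sum_{i\in E}f_iz^i$ with $f_i\in\cL_\downarrow(\ell_1)$. Compute $\rho_D(F)=\rho(\overline{D}\,F)$: by the same grading bookkeeping as above, the $z^0$-component of $\overline{D}F=\bigl(\sum_j\bar d_jz^{-k_j}\bigr)\bigl(\sum_i f_iz^i\bigr)$ picks out exactly the pairs $(i,j)$ with $i\equiv k_j\pmod{2\ZZ^d}$, i.e. $i=j$, giving $\rho_D(F)=\sum_{j\in E}\bar d_j f_j$ (with $k_0=0$). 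Hmm — one must be careful about the power $z^{-k_j}$ versus $z^{k_j}$; since they lie in the same residue class this does not affect which $f_i$ survive, only a harmless shift by an even power that can be tracked. Setting $\rho_D(F)=0$ gives $\bar d_0 f_0=-\sum_{j\ne0}\bar d_j f_j$, hence (invertibility of $\bar d_0$) $f_0=-\sum_{j\ne0}\bar d_0^{-1}\bar d_j f_j$. Now consider $G:=F-\sum_{j\ne0}(\bar d_0^{-1}f_j)\,H_j$. Wait: I want to choose coefficients $g_j\in\cL_\downarrow(\ell_1)$ so that $F-\sum_{j\ne0}g_jH_j$ has vanishing $z^i$-component for all $i\ne0$. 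The $z^j$-component of $-g_jH_j=-g_j(\bar d_j-\bar d_0z^{k_j})$ is $g_j\bar d_0z^{k_j}$ (up to the even-shift bookkeeping), so choosing $g_j:=\bar d_0^{-1}f_j\cdot z^{j-k_j}$ (an even power times $f_j$) kills the $z^j$-component of the difference for each $j\ne0$. The resulting $G=F-\sum_{j\ne0}g_jH_j$ then lies in $\cL_\downarrow(\ell_1)z^0$; but $G\in\Ker\rho_D$ as well (both $F$ and each $H_j$ are), and on $\cL_\downarrow(\ell_1)z^0=\cL_\downarrow(\ell_1)$ the form $\rho_D$ acts as multiplication by $\bar d_0$ (again up to tracking an even shift coming from $D$), which is injective by invertibility of $\bar d_0$. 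Hence $G=0$, i.e. $F\in\spann\{H_j\}$. This proves (iii).

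\medskip

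\textbf{Item (iv).} This is immediate from Lemma~\ref{l2.2}: by (iii) the $H_j$, $j\in E\setminus\{0\}$, form a basis of $\Ker\rho_D$ over $\cL_\downarrow(\ell_1)$, and by hypothesis $\rho_D(H(z))$ is invertible in $\cL_\downarrow(\ell_1)$, so Lemma~\ref{l2.2} yields that $\{H_j:j\in E\setminus\{0\}\}\cup\{H\}$ is a basis of $\cL(\ell_1)$ over $\cL_\downarrow(\ell_1)$.

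\medskip

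The only genuinely delicate point is the bookkeeping with the powers $z^{k_j}$ versus $z^{-k_j}$ and the requirement ``$d_jz^{k_j}\in\cL_\downarrow z^j$'': I expect the main obstacle to be stating cleanly that each $z^{k_j}$ has residue $j$ mod $2\ZZ^d$, so that multiplication by $z^{k_j}$ (or $z^{-k_j}$) is the grading automorphism permuting the summands of \eqref{Eq 2.1} exactly as described in the paragraph preceding the Proposition. Once that is pinned down, items (i)–(iii) are pure grading-and-invertibility computations of the kind carried out above, and (iv) is a one-line citation of Lemma~\ref{l2.2}.
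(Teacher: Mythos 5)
Your proof is correct and follows essentially the same route as the paper's: (i) via the $E$-grading of $\cL(\ell_1)$ plus conjugation, (ii) by comparing components in the direct sum and using invertibility of $d_0$, (iii) by subtracting a suitable $\cL_\downarrow(\ell_1)$-combination of the $H_j$ to land in $\Ker\rho_D\cap\cL_\downarrow(\ell_1)=0$, and (iv) by citing Lemma~\ref{l2.2}. The only blemishes are cosmetic: your coefficient in (iii) needs a minus sign, $g_j=-\bar d_0^{-1}f_j z^{j-k_j}$, matching the paper's $\lambda_j=-(\bar d_0 z^{r_j})^{-1}a_j$, and your normalization $k_0=0$ is exactly the one the paper uses implicitly when it computes $\rho(D)=d_0$.
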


\begin{proof} 
Since ${d_j}{z^{k_j}}  \in {\cL_{\downarrow}}{z^j}$ and ${d_j} \in {\cL_{\downarrow}}$, it is not difficult to show that
\begin{equation}
 {z^{k_j}} = {z^{r_j}}{z^{j}}  \label{Eq 2.2}
 \end{equation}
 for any ${j \in E}$,  where $ {r_j} \in ({2\mathbb{Z}})^d $. It easily implies that   
${\cL_{\downarrow}(\ell_1)}{z^j} = {\cL_{\downarrow}(\ell_1)}{z^{k_j}}$. Then it follows from (\ref{Eq 2.1}) that
\begin{equation}
  \cL (\ell_1)=\oplus_{j \in E}\cL_{\downarrow}(\ell_1)z^{k_j}. \label{Eq 2.3}
\end{equation}
(i) Since the multiplication by $ z^k $  permutes the direct summands  of $\cL = 
\oplus_{j \in E}\cL_{\downarrow}z^j $, it is not difficult to note  that $\rho (D(z) z^{-k_j}) = d_j$ for any 
$ {j \in E} $. Then, as the mapping $\rho$ is linear over  $ \cL_{\downarrow} $, we have $\rho (D(z) {D_j}(z)) = 
\rho (D(z) ({d_j} - {d_0}{z^{ -k_ j}}))  =  {d_j}{ \rho (D(z))} - {d_0}{\rho (D(z){z^{ -k_ j}})} = 
{d_j}{ d_0} - {d_0}{d_j} = 0$ for any $j \in E\setminus \{ 0\}$.  Therefore, 
$0 =  \overline{\rho (D(z) {D_j}(z))} = \rho (\overline{D(z) {D_j}(z)}) = 
\rho (\overline{D(z)} {H_j}(z))$.

(ii)  Consider a linear combination 
$$\sum_{j \in E\setminus \{ 0\} } {\lambda _j}{H_j}(z) = 
\sum_{j \in E\setminus \{ 0\} } {\lambda _j}({\bar d_j} - {\bar d_0}{z^{k_j}})  =
 ( \sum_{j \in E\setminus \{ 0\} } {\lambda _j}{\bar d_j}) - 
 (\sum_{j \in E\setminus \{ 0\} } 
 {\lambda_j }{\bar d_0}z^{k_j}), $$  	
 where ${\lambda_j} \in {\cL_{\downarrow}} (\ell_1)$. 
Since  	${\lambda_j} \in {\cL_{\downarrow}}({\ell_1})$, we see that 
$(\sum_{j \in E\setminus \{ 0\} } {\lambda _j}
{\bar d_j}) \in {\cL_{\downarrow}}({\ell_1}) $. Therefore, by (\ref{Eq 2.3}),  if $\sum_{j \in E\setminus \{ 0\} }
 {\lambda _j}{H_j}(z) =  0$ then $ {\lambda _j}{{\bar d}_0}
 {z^{k_j}} = 0 $ for any $j \in E\setminus \{ 0\}$. But, as ${d_0} \ne 0$, 
 it is possible if and only if all ${\lambda _j} = 0$. 
 Thus, ${H_j}(z) = {\bar d_j} - {\bar d_0}{z^j}$, where $j \in E\setminus \{ 0\}$, 
 are linearly independent over ${\cL_{\downarrow}}({\ell_1})$.   

(iii) It follows from (i) and Lemma \ref{l2.1} that all ${H_j}(z) \in \Ker{\rho_D} =  
\Ker{\tau_D}$. Since ${d_0} \ne 0$ has no zero divisors 
in ${\cL_{\downarrow}}({\ell_1})$, 	it is also not difficult to note that $\Ker{\rho_D} \cap {\cL_{\downarrow}}({\ell_1})
 = 0$. 	Let $A(z) = 	 \sum\nolimits_{j \in E} {{a_j}{z^j}}  \in \Ker{\rho_D}$, 
 where ${a_j} \in {\cL_{\downarrow}}({\ell_1})$ 	 and $E = {\left\{ {0,\,1} \right\}^d}$.  
As ${H_j}(z) = {\bar d_j} - {\bar d_0}{z^{k_j}}$, it follows from (\ref{Eq 2.2}) that 
${H_j}(z) = {\bar d_j} - ({\bar d_0}{z^{r_j}}){z^{j}}$, where $ {r_j} \in ({2\mathbb{Z}})^d$. 
Since the element $ d_0 $ is invertible in $ \cL_{\downarrow}(\ell_1) $, it is easy to note  that
there are elements ${\lambda _j} =  - ({\bar d_0}{z^{r_j}})^{ - 1}{a_j} \in  \cL_{\downarrow}(\ell_1) $. 
Then it is not difficult to show that $A(z) - \sum_{j \in E\setminus \{ 0\} } {\lambda _j}
{H_j}(z) = a  \in \Ker{\rho_D} \cap {\cL_{\downarrow}}({\ell_1}) = 0$. Therefore,  
$A(z) = \sum_{j \in E\setminus \{ 0\}} {{\lambda _j}{H_j}(z)} $ and the assertion follows from (ii). 

	(iv). The assertion follows from (iii) and Lemma \ref{l2.2}. 
\end{proof}

\begin{proposition}\label{p2.2}
Let $U(z)$ be a polynomial of $\cL$ and let ${\rho_U}:\cL({\ell_1}) \mapsto {\cL_{\downarrow}}({\ell_1})$ be a mapping given
by  ${\rho_U}:F(z) \mapsto \rho (U(z)   \overline {F(z)} )$ for any element $F(z)$ of $\cL({\ell_1})$. Suppose that there is
an invertible in $\cL({\ell_1})$ polynomial $V(z) \in \cL$ such that ${d_0} \ne 0$ is invertible in
${\cL_{\downarrow}}({\ell_1})$, where $D(z) = \sum_{j \in E} {{d_j}{z^j}} = U(z)   V(z)$. Let ${D_j}(z) = {d_j} - {d_0}{z^{ -
j}}$, where $j \in E\setminus \{ 0\} $. Then:
 \begin{romanlist}[(i)]
\item	$\rho (U(z)   V(z)   {D_j}(z)\,) = 0$;
\item  elements ${H_j} = \overline{V(z)}  \overline{{D_j}(z)} $ form a basis of $\Ker{\rho_U}$ over 
${\cL_{\downarrow}}({\ell_1})$, where $j \in E\setminus \{ 0\} $. 
\item	if $H(z) \in \cL$ is a polynomial  such that ${\rho_U}(H(z))$ is invertible in ${\cL_{\downarrow}}({\ell_1})$ then
polynomials $\{ {H_j}(z)|j \in E\setminus \{ 0\} \} $ and $H(z)$ form a basis of $\cL({\ell_1})$ over
${\cL_{\downarrow}}({\ell_1})$. 
 \end{romanlist}
\end{proposition}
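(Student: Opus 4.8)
The plan is to reduce Proposition~\ref{p2.2} to Proposition~\ref{p2.1} by transporting the linear form $\rho_U$ along the automorphism of $\cL(\ell_1)$ induced by multiplication by the unit $V(z)$. Concretely, since $V(z)$ is invertible in $\cL(\ell_1)$, the map $\mu_V:F(z)\mapsto V(z)F(z)$ is an $\cL_\downarrow(\ell_1)$-module automorphism of $\cL(\ell_1)$, and one checks directly from the definitions that $\rho_U(V(z)F(z))=\rho(U(z)\,\overline{V(z)F(z)})=\rho(U(z)\overline{V(z)}\,\overline{F(z)})$. Here the factor $U(z)\overline{V(z)}$ is not yet $D(z)=U(z)V(z)$, so instead I would work with the conjugate form: by Lemma~\ref{l2.1}, $\Ker\rho_U=\Ker\tau_U$, where $\tau_U:F\mapsto\rho(\overline{U}F)$, and then $\tau_U(\overline{V(z)}F(z))=\rho(\overline{U(z)}\,\overline{V(z)}F(z))=\rho(\overline{D(z)}F(z))=\tau_D(F(z))$. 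Thus the automorphism $F\mapsto\overline{V(z)}F(z)$ carries $\Ker\tau_D=\Ker\rho_D$ isomorphically onto $\Ker\tau_U=\Ker\rho_U$.

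With that in hand, Proposition~\ref{p2.1} applied to $D(z)=\sum_{j\in E}d_jz^j$ (legitimate since $d_0\ne0$ is invertible in $\cL_\downarrow(\ell_1)$ by hypothesis) gives that the polynomials $H_j^{\mathrm{old}}(z):=\overline{D_j(z)}=\overline{d_j}-\overline{d_0}z^{j}$, $j\in E\setminus\{0\}$, form a basis of $\Ker\rho_D$ over $\cL_\downarrow(\ell_1)$; note here $k_j=j$ in the present setting, so Proposition~\ref{p2.1}(i) reads $\rho(D(z)D_j(z))=0$, which is exactly claim~(i) after substituting $D=UV$. Applying the module automorphism $F\mapsto\overline{V(z)}F(z)$ to this basis yields that $\overline{V(z)}\,\overline{D_j(z)}=\overline{V(z)D_j(z)}$, $j\in E\setminus\{0\}$, is a basis of $\Ker\rho_U$ over $\cL_\downarrow(\ell_1)$, which is claim~(ii). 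For part~(iii), I would simply invoke Lemma~\ref{l2.2} with the linear form $\rho_U$ in place of $\rho_D$: that lemma's proof only used that $\rho_D$ is an $\cL_\downarrow(\ell_1)$-linear form on $\cL(\ell_1)$ together with the basis property of $\Ker\rho_D$, and $\rho_U$ shares both features, so once (ii) is established and $\rho_U(H(z))$ is invertible, $\{H_j\}\cup\{H\}$ is a basis of $\cL(\ell_1)$.

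The routine verifications to be carried out are: that $\mu_V$ and $F\mapsto\overline{V}F$ are genuinely $\cL_\downarrow(\ell_1)$-linear bijections of $\cL(\ell_1)$ (bijectivity because $V$, hence $\overline V$, is a unit; $\cL_\downarrow(\ell_1)$-linearity is immediate); that $\overline{V(z)D_j(z)}$ indeed lies in $\cL$ rather than merely $\cL(\ell_1)$ (it does, since $V,D_j\in\cL$ and $\cL$ is closed under conjugation); and the conjugation identities used above, which follow from conjugation being a ring automorphism fixing $\rho$-images up to conjugation as in Lemma~\ref{l2.1}. The one point demanding slight care — the main obstacle, such as it is — is keeping the conjugations straight: one must pass through $\tau_U$/$\tau_D$ rather than $\rho_U$/$\rho_D$ so that $U\overline{V}$ becomes $\overline{UV}=\overline{D}$ and not the unwanted $U\overline{V}$, and correspondingly the transporting automorphism is multiplication by $\overline{V}$, not by $V$. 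Everything else is a direct transcription of Propositions~\ref{p2.1} and Lemmas~\ref{l2.1}–\ref{l2.2}.
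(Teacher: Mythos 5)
Your proof is correct and follows essentially the same route as the paper: parts (i) and (ii) reduce to Proposition~\ref{p2.1} applied to $D=UV$ (with $k_j=j$), using that multiplication by the unit $\overline{V(z)}$ is an $\cL_{\downarrow}(\ell_1)$-module automorphism of $\cL(\ell_1)$ carrying $\Ker\rho_D$ onto $\Ker\rho_U$, and (iii) follows from Lemma~\ref{l2.2}. The only cosmetic difference is your detour through $\tau_U,\tau_D$ and Lemma~\ref{l2.1}; the direct computation $\rho_U(\overline{V(z)}F(z))=\rho\big(U(z)V(z)\overline{F(z)}\big)=\rho_D(F(z))$ gives the kernel correspondence without it.
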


\begin{proof}
(i) We can apply Proposition \ref{p2.1}(i) to the case where $D(z) = \sum\nolimits_{j \in E} {{d_j}{z^j}} $ =
$U(z)  V(z)$.

(ii) By Proposition \ref{p2.1}(iii), elements $\overline {{D_j}(z)} $ form a basis of $\Ker{\rho_D}$ over
${\cL_{\downarrow}}({\ell_1})$, where $j \in E\setminus \{ 0\} $ and ${\rho_D}$ is a mapping ${\rho_D}:\cL({\ell_1})
\mapsto {\cL_{\downarrow}}({\ell_1})$ which is given by ${\rho_D}:A(z) \mapsto \rho (D(z)   \overline {A(z)} )$ for any
element $A(z)$ of $\cL({\ell_1})$. As the polynomial $V(z) \in \cL$ is invertible in $\cL({\ell_1})$, the polynomial
$\overline {V(z)} $ is also invertible in $\cL({\ell_1})$. Therefore, the multiplication by $\overline {V(z)} $ induces an
${\cL_{\downarrow}}({\ell_1})$-module automorphism on $\cL({\ell_1})$ which maps $\Ker{\rho_D}$ on $\Ker{\rho_U}$. Then we
can conclude that  polynomials ${H_j} = \overline {V(z) {D_j}(z)} $  form a basis of $\Ker{\rho_U}$ over
${\cL_{\downarrow}}({\ell_1})$, where $j \in E\setminus \{ 0\} $. 

(iii)  The assertion follows from (ii) and Lemma \ref{l2.2}. 
\end{proof}

Let $i =  (i_1, i_2,...,i_d) \in  {\mathbb{Z}}^d$ then we put $| i | =  ( | i_1 | ,  | i_2 | ,..., | i_d | )$ .

\begin{corollary}\label{c2.1} 
Let $U(z) = \sum\nolimits_{j \in E} {{u_j}{z^j}}  \in \cL$. Suppose that ${u_i} \ne 0$ is invertible in
${\cL_{\downarrow}}({\ell_1})$ for some $i \in E\setminus \{ 0\} $. Then: 
\begin{romanlist}[(i)]
\item elements ${H_{|j-i|}} = \overline{u_j} {z^{ i}} - \overline{u_i}{z^{j}} $ form a basis of $\Ker{\rho_U}$ over 
${\cL_{\downarrow}}({\ell_1})$, where $j \in E\setminus \{ i\} $; 
\item if $H(z) \in \cL$ is a polynomial  such that ${\rho_U}(H(z))$ is invertible in ${\cL_{\downarrow}}({\ell_1})$ then
polynomials $\{ {H_j}(z)|j \in E\setminus \{ 0\} \} $ and $H(z)$ form a basis of $\cL({\ell_1})$ over
${\cL_{\downarrow}}({\ell_1})$. 
\end{romanlist}
\end{corollary}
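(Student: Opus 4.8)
The plan is to derive Corollary~\ref{c2.1} as a special case of Proposition~\ref{p2.2}, after applying the monomial automorphism $z\mapsto z\cdot(\text{shift})$ that moves the index $i$ to $0$. First I would observe that multiplication by $z^{-i}$ is an ${\cL_{\downarrow}}({\ell_1})$-module automorphism of $\cL({\ell_1})$, since $z^{-i}\in\cL$ is invertible and (by the remark preceding Proposition~\ref{p2.1}) it permutes the direct summands $\cL_{\downarrow}(\ell_1)z^j$ in the decomposition \eqref{Eq 2.1}. Set $V(z):=z^{-i}$ and $D(z):=U(z)V(z)=z^{-i}\sum_{j\in E}u_jz^j$. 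Because $z^{j-i}$ lies in $\cL_{\downarrow}z^{|j-i|}$ (the exponent $j-i$ and $|j-i|$ differ by an element of $(2\ZZ)^d$, as each coordinate of $j,i$ is $0$ or $1$), the term of $D$ lying in the ``$z^0$-summand'' $\cL_{\downarrow}$ is exactly $u_i$ (coming from $j=i$); thus in the notation of Proposition~\ref{p2.2} we have $d_0=u_i$, which is assumed $\ne0$ and invertible in ${\cL_{\downarrow}}({\ell_1})$. So the hypotheses of Proposition~\ref{p2.2} are met.

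Next I would apply Proposition~\ref{p2.2}(ii): the polynomials $H_{|j-i|}(z)=\overline{V(z)}\,\overline{D_{|j-i|}(z)}$ with $D_{|j-i|}(z)=d_{|j-i|}-d_0z^{-|j-i|}$ form a basis of $\Ker{\rho_U}$ over ${\cL_{\downarrow}}({\ell_1})$, the index $|j-i|$ running over $E\setminus\{0\}$ as $j$ runs over $E\setminus\{i\}$. It then remains to unwind the explicit formulas. One has $d_{|j-i|}z^{|j-i|}=u_jz^{j-i}$, hence $d_{|j-i|}=u_jz^{j-i-|j-i|}$ and $d_0=u_i$; a short computation gives
\begin{equation*}
\overline{V(z)}\,\overline{D_{|j-i|}(z)}
=z^{i}\bigl(\overline{d_{|j-i|}}-\overline{d_0}\,z^{|j-i|}\bigr)
=z^{i}\bigl(\overline{u_j}\,z^{-(j-i-|j-i|)}-\overline{u_i}\,z^{|j-i|}\bigr)
=\overline{u_j}\,z^{i}-\overline{u_i}\,z^{j},
\end{equation*}
using $-(j-i-|j-i|)+i+|j-i|$... more carefully, $z^{i}\cdot\overline{d_{|j-i|}}=\overline{u_j}\,z^{i+(|j-i|-(j-i))}=\overline{u_j}\,z^{2i-j+2|j-i|}$; since $|j-i|-(j-i)\in(2\ZZ)^d$ and this lands in the appropriate coset, it equals $\overline{u_j}z^{i}$ up to the identification of $\cL_\downarrow$-multiples, and $z^{i}\cdot\overline{d_0}z^{|j-i|}=\overline{u_i}z^{i+|j-i|}=\overline{u_i}z^{j}$ by the same coset reasoning. (In the paper's box-spline applications the coefficients $u_j$ are even polynomials in $z$, i.e. genuinely in $\cL_\downarrow$, so $z^{j-i}=z^{|j-i|}$ exactly and these identifications are equalities on the nose; I would phrase the general statement so that $H_{|j-i|}$ is \emph{equal} to $\overline{u_j}z^i-\overline{u_i}z^j$ as an element of $\cL$.) This yields part~(i).

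Finally, part~(ii) is immediate: given $H(z)\in\cL$ with ${\rho_U}(H(z))$ invertible in ${\cL_{\downarrow}}({\ell_1})$, Proposition~\ref{p2.2}(iii) (equivalently Lemma~\ref{l2.2} applied to the form ${\rho_U}$ and the basis of $\Ker{\rho_U}$ from part~(i)) gives that $\{H_j:j\in E\setminus\{0\}\}\cup\{H\}$ is a basis of $\cL({\ell_1})$ over ${\cL_{\downarrow}}({\ell_1})$.

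The main obstacle is purely bookkeeping: keeping the monomial shifts straight so that the abstract basis element $\overline{V(z)D_{|j-i|}(z)}$ from Proposition~\ref{p2.2} simplifies cleanly to the symmetric closed form $\overline{u_j}z^i-\overline{u_i}z^j$. The subtlety is that the exponent of the ``$d_0$-part'' of $D=U\cdot z^{-i}$ must be identified correctly as the $j=i$ term, and that the coset identity $|j-i|\equiv j-i\pmod{(2\ZZ)^d}$ (valid precisely because $i,j\in\{0,1\}^d$) is what makes the $\cL_\downarrow$-module structure transport correctly under multiplication by $z^{-i}$; once those two points are pinned down, everything else is a direct substitution into Proposition~\ref{p2.2}.
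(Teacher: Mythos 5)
Your proof is correct and follows essentially the same route as the paper's: both apply Proposition~\ref{p2.2} with $V(z)=z^{-i}$, identify $d_0=u_i$, and unwind the formula for $\overline{V(z)\,D_{|j-i|}(z)}$ to obtain $\overline{u_j}z^{i}-\overline{u_i}z^{j}$. The paper sidesteps your coset bookkeeping by writing $D(z)=\sum_{j\in E}d_{|j-i|}z^{j-i}$ with exponents $k_{|j-i|}=j-i$ as permitted in Proposition~\ref{p2.1}, so that $D_{|j-i|}(z)=u_j-u_iz^{-(j-i)}$ and the simplification is an exact identity rather than (as in your computation, where the intermediate exponent should read $2i-j+|j-i|$) an equality up to an invertible monomial of $\cL_{\downarrow}$ --- which, as you note, does not affect the basis property.
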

\begin{proof}
We can apply Proposition \ref{p2.2} in the case where $V(z) = {z^{ - i}}$. 

(i) Let $D(z) = U(z){z^{ - i}} = \sum_{j \in E} {{d_j}{z^j}} $ and let ${D_j}(z) = {d_j} - {d_0}{z^{ - j}}$, where $j \in E\setminus \{ 0\} $. Then   $D(z) =  \sum_{j \in E} {d_{|j-i|}}{z^{(j-i)}}$, where $ {d_{|j-i|}} = u_j$ and hence $D_{|j-i|}(z) = {u_j} - {u_i}z^{ -( j-i)}$, where $j \in E\setminus \{i\} $. 
Therefore, by Proposition \ref{p2.2}(i),  ${H_{|j-i|}} = \overline {{z^{ - i}}  {D_{|j-i|}}(z)} $ 
form a basis of $\Ker{\rho_U}$ over  ${\cL_{\downarrow}}({\ell_1})$, where $j \in E\setminus \{ i\} $. 
 Since ${H_{|j-i|}} = \overline {{z^{ - i}}  {D_{|j-i|}}(z)} =  \overline{z^{ - i}}  
 (\overline{{u_j} - {u_i}{z^{ -( j-i)}}})= z^{ i} (\overline{u_j} -  \overline{u_i}z^{ ( j-i)}) = 
  \overline{u_j}z^{ i} -  \overline{u_i}z^{ j}$, the assertion follows. 
    
  (ii) The assertion follows from Proposition 
  \ref{p2.2}(iii).
\end{proof}

\subsection{Bases for the orthogonal complement  with respect to a bilinear form}
\label{Section 3}

Put $U(z) =c H(z) \Phi(z)$, where $c\in  \mathbb{C} $ is an arbitrary nonzero constant that only affects 
the scaling of the resulting basis and can be chosen individually for particular $H$ and $\Phi$.
Then it follows from the definition of ${\left\langle {G,F} \right\rangle_\Phi}$ and \eqref{rho} that 
\begin{equation}   \label{eq3.1}
 {\left\langle {H,F} \right\rangle_\Phi} = {2^d}c^{-1}\rho (U(z)  \overline {F(z)})  = {2^d}c^{-1}{\rho_U}(F(z)).                      
\end{equation}
 Thus, ${\left\langle {H,F} \right\rangle _\Phi} = 0$ if and only if 
 $$\rho (U(z) \overline {F(z)} \,) = {\rho_U}(F(z)) = 0.$$                                        
 
  \begin{proposition}    \label{p3.1}                                 
 The polynomial $\Phi(z)$ has  the following  properties: 
\begin{romanlist}
\item $\overline{\Phi(z)} = \Phi(z)$ and the function $\Phi(z)$ has only real values; 
\item there is a positive real number $A$ such that   $\left| {\Phi(z)} \right| \ge A$  on the unit torus; 
\item  $ {\left\langle {H,H} \right\rangle_\Phi} = {2^d}c^{-1}\rho (U(z) \overline {H(z)}) = 
{2^d}c^{-1}{\rho_U}(H(z)) = \Phi({z^2})$.                         
\end{romanlist}
\end{proposition}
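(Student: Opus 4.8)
The plan is to verify the three properties in order, using the box spline structure of $\Phi$ together with facts already recorded in the excerpt. For (i), I would start from the defining formula \eqref{Phi}, namely $\Phi(z)=\sum_{k\in\ZZ^d}\tilde\varphi(k)z^k$ with $\tilde\varphi=\varphi*\overline{\varphi(-\cdot)}$. A direct computation of the autocorrelation function gives $\tilde\varphi(-k)=\overline{\tilde\varphi(k)}$, and when $\varphi=M_\Xi$ is a (real-valued) box spline this forces $\tilde\varphi$ to be both real and even, so $\tilde\varphi(-k)=\tilde\varphi(k)\in\RR$. Feeding this into \eqref{conj} yields $\overline{\Phi(z)}=\sum_k\overline{\tilde\varphi(k)}z^{-k}=\sum_k\tilde\varphi(-k)z^{-k}=\Phi(z)$. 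On the unit torus, $z^{-k}=\bar z^k$, so $\overline{\Phi(z)}$ is literally the complex conjugate of $\Phi(z)$; combined with $\overline{\Phi(z)}=\Phi(z)$ this says $\Phi(z)\in\RR$ for $z\in\TT^d$. Alternatively one can read this off the second expression in \eqref{Phi}, $\Phi(z)=\sum_k|\hat\varphi(u+2\pi k)|^2$, which is manifestly a nonnegative real number.

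For (ii), the lower bound is exactly the left-hand inequality of the Riesz condition \eqref{Riesz}, which by hypothesis is in force (it holds for $\varphi=M_\Xi$ precisely because $\Xi$ is assumed unimodular, as recalled after \eqref{Phibox}). Since $\Phi$ is real and $\ge A>0$ on $\TT^d$, we have $|\Phi(z)|=\Phi(z)\ge A$ there, which is the claim; no separate argument is needed beyond invoking \eqref{Riesz}.

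For (iii), the first two equalities are nothing but \eqref{eq3.1} applied with $F=H$ together with the definition $U=cH\Phi$: indeed $\langle H,H\rangle_\Phi=2^dc^{-1}\rho(U(z)\overline{H(z)})=2^dc^{-1}\rho_U(H(z))$ by construction of $\rho_U$. The remaining identity $\langle H,H\rangle_\Phi(z)=\Phi(z^2)$ is precisely \eqref{Phiz2}, the standard box-spline calculation already cited from \cite[Eq.~(2.2)]{CSW92}; I would simply quote it. So part (iii) is a matter of assembling \eqref{eq3.1}, the definition of $U$, and \eqref{Phiz2}.

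The only step with any content is (i), and even there the work is routine: the symmetry $\tilde\varphi(-k)=\overline{\tilde\varphi(k)}$ follows by the substitution $t\mapsto t-k$ in the convolution integral, and realness of $\tilde\varphi$ follows because $\varphi=M_\Xi$ has real values. I do not anticipate a genuine obstacle; the proposition is a bookkeeping statement collecting properties of $\Phi$ that will be used to show $\rho_U$ fits the hypotheses of Corollary~\ref{c2.1} and Proposition~\ref{p2.2}. If one wanted to avoid the box-spline specifics entirely, parts (i) and (ii) both follow from \eqref{Phi} and \eqref{Riesz} for any admissible scaling function, and (iii) from \eqref{Phiz2}; I would present it at that level of generality and remark that realness also drops out of $\Phi(z)=\sum_k|\hat\varphi(u+2\pi k)|^2$.
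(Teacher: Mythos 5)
Your proposal is correct and follows essentially the same route as the paper, which simply observes that (i) follows from \eqref{Phi}, (ii) from \eqref{Riesz}, and (iii) from \eqref{eq3.1} and \eqref{Phiz2}; you merely spell out the routine verifications that the paper leaves implicit.
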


\begin{proof}
It is easy to see that  (i) follows from \eqref{Phi},
(ii) from \eqref{Riesz}, and (iii) from \eqref{Phiz2}.
\end{proof}

\begin{proposition}\label{p3.2} 
 Suppose that there exists an invertible in $\cL({\ell_1})$ polynomial $V(z) \in \cL$ such that ${d_0} \ne 0$ is invertible in ${\cL_{\downarrow}}({\ell_1})$, where $D(z) = cH(z)\Phi(z)V(z) = \sum\nolimits_{j \in E} {{d_j}{z^j}} $. Let ${D_j}(z) = {d_j} - {d_0}{z^{ - j}}$ and let ${H_j}(z) = \overline {V(z){D_j}(z)} $, where $j \in E\setminus \{ 0\} $. Then 
\begin{romanlist}[(i)]
\item  ${\left\langle {H,{H_j}(z)} \right\rangle _\Phi} = 0$ for all $j \in E\setminus \{ 0\} $ ;
\item polynomials $\{ {H_j}(z)|j \in E\setminus \{ 0\} \} $ and $H(z)$ form a basis of $\cL({\ell_1})$ over ${\cL_{\downarrow}}({\ell_1})$. 
\end{romanlist}
\end{proposition}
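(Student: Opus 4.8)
The statement of Proposition~\ref{p3.2} is essentially a specialization of Proposition~\ref{p2.2} to the case $U(z)=cH(z)\Phi(z)$, combined with the orthogonality reformulation \eqref{eq3.1}. So the plan is to verify that the hypotheses of Proposition~\ref{p2.2} are met and then translate its conclusions into the language of $\langle\cdot,\cdot\rangle_\Phi$. First I would set $U(z)=cH(z)\Phi(z)\in\cL$ and observe that the mapping $\rho_U$ of Proposition~\ref{p2.2} is, by \eqref{eq3.1}, exactly $2^{-d}c$ times the form $F\mapsto\langle H,F\rangle_\Phi$; in particular $\Ker\rho_U=\{F:\langle H,F\rangle_\Phi=0\}$ and $\rho_U(H(z))$ is invertible in $\cL_\downarrow(\ell_1)$ if and only if $\langle H,H\rangle_\Phi$ is, which holds by Proposition~\ref{p3.1}(iii) since $\langle H,H\rangle_\Phi(z)=\Phi(z^2)$ is bounded below on $\TT^d$ by Proposition~\ref{p3.1}(ii) and hence invertible by Wiener's lemma.

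\textbf{Applying Proposition~\ref{p2.2}.} With the invertible polynomial $V(z)\in\cL$ furnished by hypothesis and the assumption that the constant term $d_0$ of $D(z)=cH(z)\Phi(z)V(z)=\sum_{j\in E}d_jz^j$ is invertible in $\cL_\downarrow(\ell_1)$, the hypotheses of Proposition~\ref{p2.2} are satisfied verbatim. Part (ii) of that proposition then gives that the polynomials $H_j=\overline{V(z)}\,\overline{D_j(z)}=\overline{V(z)D_j(z)}$, $j\in E\setminus\{0\}$, form a basis of $\Ker\rho_U$ over $\cL_\downarrow(\ell_1)$, with $D_j(z)=d_j-d_0z^{-j}$. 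Since $\Ker\rho_U$ consists precisely of those $F$ with $\langle H,F\rangle_\Phi=0$, conclusion (i) of Proposition~\ref{p3.2} follows immediately: each $H_j$ lies in $\Ker\rho_U$, so $\langle H,H_j(z)\rangle_\Phi=0$ for all $j\in E\setminus\{0\}$.

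\textbf{The basis property.} For conclusion (ii) I would invoke Proposition~\ref{p2.2}(iii), which states that if $H(z)\in\cL$ is a polynomial with $\rho_U(H(z))$ invertible in $\cL_\downarrow(\ell_1)$, then $\{H_j:j\in E\setminus\{0\}\}\cup\{H(z)\}$ is a basis of $\cL(\ell_1)$ over $\cL_\downarrow(\ell_1)$. The required invertibility of $\rho_U(H(z))$ is exactly the computation in Proposition~\ref{p3.1}(iii) noted above, namely $\rho_U(H(z))=2^{-d}c\,\langle H,H\rangle_\Phi=2^{-d}c\,\Phi(z^2)$, which is invertible in $\cL_\downarrow(\ell_1)$ because $\Phi$ is bounded away from zero on $\TT^d$ (so is $\Phi(z^2)$) and Wiener's lemma applies. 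Hence (ii) follows.

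\textbf{Main obstacle.} There is no serious obstacle here: the proposition is by design a restatement of Proposition~\ref{p2.2} with the dictionary \eqref{eq3.1} between $\rho_U$ and $\langle H,\cdot\rangle_\Phi$. The only point requiring a touch of care is making sure the specific $H$ appearing in the form $\langle H,\cdot\rangle_\Phi$ is a legitimate choice for the ``$H(z)$'' in Proposition~\ref{p2.2}(iii) — that is, verifying $\rho_U(H)=2^{-d}c\,\Phi(z^2)$ is invertible in $\cL_\downarrow(\ell_1)$ — which is precisely the content of Proposition~\ref{p3.1}(iii) together with the Riesz bound \eqref{Riesz} and Wiener's lemma. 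Everything else is routine bookkeeping with conjugation and the direct sum decomposition \eqref{Eq 2.1}.
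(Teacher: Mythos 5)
Your proposal is correct and follows essentially the same route as the paper's own proof: both parts reduce to Proposition~\ref{p2.2} via the identity \eqref{eq3.1} identifying $\rho_U$ with $2^{-d}c\,\langle H,\cdot\rangle_\Phi$, with the invertibility of $\rho_U(H)=2^{-d}c\,\Phi(z^2)$ supplied by Proposition~\ref{p3.1} and the Riesz bound. No gaps.
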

\begin{proof}
(i) Put $U(z) = cH(z)   \Phi(z)$ then it follows from \eqref{eq3.1} that ${\left\langle {H,F} \right\rangle _\Phi} = 0$ if
and only if $\rho (U(z) \overline {F(z)}) = {\rho_U}(F(z)) = 0$. Then the assertion follows from Proposition 
\ref{p2.2}(ii).

(ii) Since $\Phi({z^2}) = {\left\langle {H,H} \right\rangle _\Phi} = 2^d c^{-1}\rho (U(z) \overline {H(z)} \,) = 2^d
c^{-1}{\rho_U}(H(z))$ and there is a positive real number $a$ such that $\left| {\Phi(z)} \right| \ge a$ on the unit torus, we
can conclude that ${\rho_U}(H(z))$ has no zeros on the unit torus. Therefore, ${\rho_U}(H(z))$ is invertible in
${\cL_{\downarrow}}({\ell_1})$ and the assertion follows from Proposition \ref{p2.2}(iii). 
\end{proof}

\begin{corollary}\label{c3.2}
Let $U(z) = cH(z)   \Phi(z) = \sum\nolimits_{j \in E} {u_j{z^j}} $, with $u_j\in\cL_\downarrow(\ell_1)$. 
Suppose that $u_i$ is invertible in ${\cL_{\downarrow}}({\ell_1})$ for some $i$. Put  ${H_{|j-i|}} = \overline{u_j}
{z^{ i}} - \overline{u_i}{z^{j}} $ for all $j \in E\setminus \{ i\} $. Then:
\begin{romanlist}[(i)]
\item  $\left\langle {H,{H_j}(z)} \right\rangle_\Phi  = 0$ for all $j \in E\setminus \{ 0\} $;
\item 	polynomials $\{ {H_j}(z):j \in E\setminus \{ 0\} \} $ and $H(z)$ form a basis of $\cL({\ell_1})$ over ${\cL_{\downarrow}}({\ell_1})$. 
\end{romanlist}
\end{corollary}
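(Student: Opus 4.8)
The plan is to obtain Corollary~\ref{c3.2} as the special case $V(z)=z^{-i}$ of Proposition~\ref{p3.2}, in exact analogy with how Corollary~\ref{c2.1} was derived from Proposition~\ref{p2.2}. First I would set $V(z)=z^{-i}$, which is a monomial and hence invertible in $\cL(\ell_1)$, and form $D(z)=cH(z)\Phi(z)V(z)=U(z)z^{-i}$. Since multiplication by $z^{-i}$ permutes the direct summands $\cL_\downarrow(\ell_1)z^j$ of $\cL(\ell_1)$ (as noted before Proposition~\ref{p2.1}), the constant term $d_0$ of $D(z)$ in the expansion $D(z)=\sum_{j\in E}d_j z^j$ equals $u_i$ up to a unit monomial factor from $\cL_\downarrow(\ell_1)$; in particular $d_0\ne 0$ and, by hypothesis on $u_i$, $d_0$ is invertible in $\cL_\downarrow(\ell_1)$. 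Thus the hypotheses of Proposition~\ref{p3.2} are met.

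Next I would track the indexing exactly as in the proof of Corollary~\ref{c2.1}. Writing $D(z)=\sum_{j\in E}d_{|j-i|}z^{\,j-i}$ with $d_{|j-i|}=u_j$, the polynomials $D_j(z)=d_j-d_0z^{-j}$ of Proposition~\ref{p3.2} become, after reindexing, $D_{|j-i|}(z)=u_j-u_i z^{-(j-i)}$ for $j\in E\setminus\{i\}$. Then the basis elements produced by Proposition~\ref{p3.2}(ii), namely $H_j=\overline{V(z)D_j(z)}$, become
\[
H_{|j-i|}=\overline{z^{-i}D_{|j-i|}(z)}=\overline{z^{-i}}\,\bigl(\overline{u_j}-\overline{u_i}\,z^{\,j-i}\bigr)=z^{i}\overline{u_j}-z^{j}\overline{u_i}=\overline{u_j}z^{i}-\overline{u_i}z^{j},
\]
which is precisely the definition of $H_{|j-i|}$ in the statement. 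Part (i), that $\langle H,H_j\rangle_\Phi=0$, is then immediate from Proposition~\ref{p3.2}(i), and part (ii), that $\{H_j:j\in E\setminus\{0\}\}\cup\{H(z)\}$ is a basis of $\cL(\ell_1)$ over $\cL_\downarrow(\ell_1)$, follows from Proposition~\ref{p3.2}(ii) together with the observation — already contained in the proof of Proposition~\ref{p3.2}(ii) — that $\rho_U(H)=2^{-d}c\,\Phi(z^2)$ is invertible in $\cL_\downarrow(\ell_1)$ because $\Phi$ is bounded below on $\TT^d$ by Proposition~\ref{p3.1}(ii).

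The only point requiring genuine care is the bookkeeping around \eqref{Eq 2.2}: one must check that the set $\{\,j-i: j\in E\,\}$ is, componentwise modulo $2$, again a copy of $E$, so that $\{|j-i|:j\in E\}=E$ and the reindexing $d_{|j-i|}=u_j$ is a genuine bijection of summands; this is exactly the content of the computation $z^{k_j}=z^{r_j}z^j$ with $r_j\in(2\ZZ)^d$ used in Proposition~\ref{p2.1}, applied here with $k_j=j-i$ and the ambient shift by $i$. Since $i,j\in\{0,1\}^d$, each coordinate of $j-i$ lies in $\{-1,0,1\}$ and is congruent mod $2$ to the corresponding coordinate of $|j-i|\in\{0,1\}$, so the claim holds and this is the main (minor) obstacle; everything else is a direct appeal to the already-established Propositions~\ref{p3.1} and~\ref{p3.2}.
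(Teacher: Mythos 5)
Your proof is correct and takes essentially the same route as the paper: the paper reduces the orthogonality condition via \eqref{eq3.1} and then cites Corollary~\ref{c2.1} (itself Proposition~\ref{p2.2} with $V(z)=z^{-i}$), while you apply Proposition~\ref{p3.2} with $V(z)=z^{-i}$ and redo the index bookkeeping from the proof of Corollary~\ref{c2.1} --- the same underlying argument. If anything, you are slightly more explicit than the paper's one-line proof in verifying that $\rho_U(H)$ is invertible via Proposition~\ref{p3.1}, a hypothesis needed for part (ii) that the paper leaves implicit in its appeal to Corollary~\ref{c2.1} (it is checked inside the proof of Proposition~\ref{p3.2}(ii)).
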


\begin{proof} 
It follows from \eqref{eq3.1} that ${\left\langle {H,F} \right\rangle _\Phi} = 0$ if and only if $\rho (U(z)\overline {F(z)}
\,) = {\rho_U}(F(z)) = 0$ and the assertion follows from Corollary \ref{c2.1}.  
\end{proof}

Let $S_d$ be the group of permutations of variables $z_1,\ldots,z_d$ and let $\sigma \in S_d$. Then we can extend $ \sigma $ to
an automorphism of the ring $ \cL $  as follows: for any polynomial  $P(z) = \sum_{k \in {\mathbb{Z}}^d}a_k z^k$ we put 
$\sigma(P(z)) =  \sum_{k \in {\mathbb{Z}}^d} a_k  {\sigma (z^k)}$, 
where $ {\sigma (z^k)} = {\sigma (z_1^{k_1}\cdots z_d^{k_d})} = \sigma (z_1)^{k_1}\cdots \sigma (z_d)^{k_d}$. 
Thus, permutations of variables induce a  group $S_d$ of automorphisms of the ring
$ \cL $. It is also easy to check that  $ \sigma(\overline{P(z)}) =\overline{\sigma(P(z))}$.

\begin{lemma}\label{l3.2} Suppose that $ \sigma(H(z)) = H(z) $ and $ \sigma(\Phi(z)) = \Phi(z) $ 
for all $ \sigma \in S_d$. 
 Let $U(z) = cH(z) \Phi(z) = \sum\nolimits_{j \in E} {{u_j}{z^j}} $, where $u_j \in \cL_{\downarrow} $. Then:
 \begin{romanlist}[(i)]
\item  $ \sigma(U(z)) = U(z) $ for any $ \sigma \in S_d$;
\item  for any $ i \in E $ and  $ \sigma \in S_d$ there is $ j \in E $ such that  
$ \sigma(z^i) = z^j$ and $ \sigma(\overline{u_i}) = \overline{u_j}$;
\item for any $ i, j \in E $ there is  $ \sigma \in S_d$   such that  
$ \sigma(\overline{u_i}) = \overline{u_j}$ if and only if  $i$ and $j$ have the same number of zeros (unities); 
\item  $\sigma(z^i) = z^i$ and $\sigma(\overline{u_i}) = \overline{u_i}$ for any $ \sigma \in S_d$ if either     
 $i = 0 = (0,...,0)$ and $i = 1 = (1,...,1)$.
\end{romanlist}
\end{lemma}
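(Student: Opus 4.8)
\textbf{Proof plan for Lemma~\ref{l3.2}.}
The plan is to derive everything from the two basic compatibility facts already established in the excerpt: that permutation of variables is a ring automorphism of $\cL$ commuting with conjugation, i.e. $\sigma(\overline{P})=\overline{\sigma(P)}$, and that it preserves the direct sum decomposition $\cL=\oplus_{j\in E}\cL_\downarrow z^j$ only up to a permutation of the summands induced by the action of $\sigma$ on the exponent vectors in $E=\{0,1\}^d$. The first observation I would record is that $\sigma$ acts on $E$ by permuting the coordinates of $j\in\{0,1\}^d$, so $\sigma(z^i)=z^{\sigma(i)}$ where $\sigma(i)$ is obtained by permuting the entries of $i$; in particular $\sigma$ maps $\cL_\downarrow z^i$ onto $\cL_\downarrow z^{\sigma(i)}$ because $\sigma$ fixes $\cL_\downarrow$ (any monomial $z^{2k}$ is sent to $z^{2\sigma(k)}$, still in $(2\ZZ)^d$).

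For (i), apply $\sigma$ to $U=cH\Phi$: since $\sigma$ is a ring homomorphism and $c$ is a scalar, $\sigma(U)=c\,\sigma(H)\,\sigma(\Phi)=cH\Phi=U$ by hypothesis. For (ii), write $U=\sum_{j\in E}u_j z^j$ with $u_j\in\cL_\downarrow$; applying $\sigma$ and using (i) gives $U=\sigma(U)=\sum_{j\in E}\sigma(u_j)\,z^{\sigma(j)}$. Re-indexing the sum by $j'=\sigma(j)$ and comparing with the uniqueness of the decomposition \eqref{Eq 2.1}, we read off $u_{\sigma(i)}=\sigma(u_i)$ for every $i\in E$, hence $\sigma(\overline{u_i})=\overline{\sigma(u_i)}=\overline{u_{\sigma(i)}}$, and $\sigma(z^i)=z^{\sigma(i)}=z^j$ with $j=\sigma(i)$, which is exactly the claim. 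For (iii), note first that $\sigma$ permutes the coordinates of $i$, so $\sigma(i)$ always has the same number of zeros (and of ones) as $i$; therefore if $\sigma(\overline{u_i})=\overline{u_j}$ holds for some $\sigma$, then by the uniqueness argument of (ii) we must have $j=\sigma(i)$, forcing $i$ and $j$ to have the same number of zeros. Conversely, if $i$ and $j$ have the same number of ones, there is a coordinate permutation $\sigma$ with $\sigma(i)=j$, and then (ii) gives $\sigma(\overline{u_i})=\overline{u_{\sigma(i)}}=\overline{u_j}$. Finally (iv) is the special case of (ii) with $i\in\{0,1\}$ (the all-zeros or all-ones vector): these are the only two elements of $E$ fixed by every coordinate permutation, so $\sigma(i)=i$ and hence $\sigma(z^i)=z^i$, $\sigma(\overline{u_i})=\overline{u_i}$ for all $\sigma\in S_d$.

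The only genuinely delicate point is the bookkeeping in (ii): one must be careful that re-indexing $\sum_{j}\sigma(u_j)z^{\sigma(j)}$ by $j'=\sigma(j)$ is legitimate because $j\mapsto\sigma(j)$ is a bijection of $E$, and then invoke the \emph{uniqueness} of the representation in \eqref{Eq 2.1} together with the fact that $\sigma(u_j)\in\cL_\downarrow$ to equate coefficients summand by summand; everything else is a direct substitution. I would also make explicit once, near the start, why $\sigma$ fixes $\cL_\downarrow$ pointwise and why $\sigma$ commutes with conjugation (the latter is already asserted in the text just before the lemma), since parts (ii)--(iv) all lean on these two facts.
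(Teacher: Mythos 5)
Your proof is correct and follows essentially the same route as the paper's: reduce to $u_i$ via $\sigma(\overline{u_i})=\overline{\sigma(u_i)}$, apply $\sigma$ to $U=\sum_{j\in E}u_jz^j$ and compare coefficients using the uniqueness of the decomposition $\cL=\oplus_{j\in E}\cL_{\downarrow}z^j$ to obtain $\sigma(u_i)=u_{\sigma(i)}$, and then deduce (iii) and (iv) from the combinatorics of the coordinate action of $S_d$ on $E$ (the paper writes out the permutation sending $i$ to $j$ explicitly, but the content is identical). One small wording correction: $\sigma$ preserves $\cL_{\downarrow}$ only as a set (it sends $z^{2k}$ to $z^{2\sigma(k)}$), not pointwise as your closing remark suggests; since your argument only uses $\sigma(u_j)\in\cL_{\downarrow}$, nothing breaks.
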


\begin{proof}
Since $ \sigma(\overline{u_i}) = \overline{\sigma(u_i)}$, it is sufficient to prove the assertions 
(ii), (iii) and (iv) for $ u_i $ on the place of $ \overline{u_i} $. 
 
(i) Since $U(z) = cH(z) \Phi(z)$, the assertion is obvious. 

(ii) Let $ \sigma \in S_d$, it is not difficult to note that the mapping given by $ z^i \mapsto  
\sigma(z^i) $ a permutation of $ \{z^i\, |\, i \in E $ and hence there is $ j \in E $ such that  
$ \sigma(z^i) = z^j$. Since $ \sigma(U(z)) = U(z) $ we see that 
\begin{equation}\label{eq3.4}
\sum\nolimits_{i \in E} \sigma(u_i)\sigma(z^i) =  \sum\nolimits_{j \in E} {{u_j}{z^j}}.
\end{equation}
Then, as the coefficients $ u_j $ are uniquely determined, we can conclude that 
$ \sigma(u_i) = u_j$.  
  
(iii)  If there is  $\sigma \in S_d$  such that  $ \sigma(u_i) = u_j$ then \eqref{eq3.4}  
shows that  $ \sigma(z^i) = z^j$ and it easily implies that $i$ and $j$ have the same 
number of zeros (unities). 
 
Let $ i=(i_1,..., i_d) $ and $ j=(j_1,..., j_d) $ . Suppose that $ i_{k_1}=...= i_{k_m}= 
 j_{t_1}=...= j_{t_m}=1$ and $ i_{k_{m+1}}=...= i_{k_d}= 
j_{t_{m+1}}=...= j_{t_d} = 0$. Put   $$ \sigma  = 
\begin{pmatrix}
z_{i_{k_1}}&...& z_{i_{k_m}}&z_{i_{k_{m+1}}}&...&z_{i_{k_d}}\\
z_{ j_{t_1}}&...& z_{j_{t_m}}&z_{j_{t_{m+1}}}&...&z_{j_{t_d}} \\
\end{pmatrix}$$
then it is not difficult to check  that $ \sigma(z^i) = z^j$. So, \eqref{eq3.4} 
shows that $ \sigma(u^i) = u^j$
 
(iv)  The assertion follows from (iii).
 \end{proof}

\begin{proposition}\label{3.3} 
Let $U(z) = cH(z) \Phi(z) = \sum\nolimits_{j \in E} {{u_j}{z^j}} $, where $u_j \in \cL_{\downarrow} $,   
 and suppose that  $ \sigma(H(z)) = H(z) $ and $ \sigma(\Phi(z)) = \Phi(z) $ 
for all $ \sigma \in S_d$. 
Let   ${H_{|t-s|}} = \overline{u_t} {z^s} - \overline{u_s}{z^t} $, where $t \in E\setminus \{ s\}$ and 
	either $s = 0$ or $ s=1 $. Suppose that $ s $ is fixed   
 then:
 \begin{romanlist}[(i)]
\item  for any $ i \in E $ and  $ \sigma \in S_d$ there is $ j \in E $ such that  
$ \sigma(H_i) = H_j$;
\item for any $ i, j \in E $ there is  $ \sigma \in S_d$   such that  
$ \sigma(H_i) = H_j$ if and only if  $i$ and $j$ have the same number of zeros (unities). 

\end{romanlist}
\end{proposition}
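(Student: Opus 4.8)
The plan is to deduce everything from Lemma~\ref{l3.2}. I would first fix notation for the index shift: for fixed $s\in\{0,1\}$ the assignment $t\mapsto|t-s|$ is a bijection of $E\setminus\{s\}$ onto $E\setminus\{0\}$, under which the number of zeros of $|t-s|$ is a fixed bijective function of the number of zeros of $t$ (it equals it when $s=0$, and equals $d$ minus it when $s=1$). So, writing $t(i)\in E\setminus\{s\}$ for the element with $|t(i)-s|=i$, we have $H_i=\overline{u_{t(i)}}\,z^s-\overline{u_s}\,z^{t(i)}$ for $i\in E\setminus\{0\}$. The case $i=0$ is trivial since $H_0=0$, so assume $i,j\in E\setminus\{0\}$ from now on; I would also note that $u_s\neq0$ (in our setting $H$ and $\Phi$ have nonnegative coefficients whose supports cover every residue class modulo $2$), which via the direct sum \eqref{Eq 2.1} lets one recover the index $t(k)$ from $H_k$.

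The core step is to apply $\sigma\in S_d$ to $H_i$. Since $s\in\{0,1\}$, Lemma~\ref{l3.2}(iv) gives $\sigma(z^s)=z^s$ and $\sigma(\overline{u_s})=\overline{u_s}$, and Lemma~\ref{l3.2}(ii) gives a single $t'\in E$ with $\sigma(z^{t(i)})=z^{t'}$ and $\sigma(\overline{u_{t(i)}})=\overline{u_{t'}}$; using that $\sigma$ is a ring automorphism commuting with conjugation one obtains $\sigma(H_i)=\overline{u_{t'}}\,z^s-\overline{u_s}\,z^{t'}=H_{|t'-s|}$. This already proves (i), with $j=|t'-s|$; moreover $t'\neq s$ (otherwise Lemma~\ref{l3.2}(iii) would force $t(i)$ and $s$ to have the same number of zeros, impossible for $t(i)\neq s$ with $s\in\{0,1\}$), so $j\in E\setminus\{0\}$.

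For (ii), the ``only if'' direction reads the core computation backwards: since $u_s\neq0$, $\sigma(H_i)=H_j$ forces $t(j)=t'$, and as $t'$ is the image of $t(i)$ under the coordinate permutation induced by $\sigma$, the indices $t(i)$ and $t(j)$ have the same number of zeros, hence so do $i$ and $j$ by the bijection above. For ``if'', given $i,j$ with the same number of zeros, $t(i)$ and $t(j)$ have the same number of zeros, so Lemma~\ref{l3.2}(iii) supplies $\sigma\in S_d$ with $\sigma(\overline{u_{t(i)}})=\overline{u_{t(j)}}$ and, what matters here, $\sigma(z^{t(i)})=z^{t(j)}$; feeding this $\sigma$ into the core computation gives $\sigma(H_i)=H_j$. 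The only delicate point is precisely this last one: Lemma~\ref{l3.2}(iii) is phrased as the mere existence of a $\sigma$ equating the coefficients, whereas I need one that sends the monomial $z^{t(i)}$ exactly to $z^{t(j)}$ (not just to some $z^m$ with $u_m=u_{t(j)}$), so I would invoke the explicit permutation constructed in its proof rather than the bare statement; together with $u_s\neq0$ this pins down the index and closes both parts.
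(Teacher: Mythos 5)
Your proof is correct and follows essentially the same route as the paper: part (i) via the computation $\sigma(H_{|t-s|})=\sigma(\overline{u_t})z^s-\overline{u_s}\,\sigma(z^t)$ combined with Lemma~\ref{l3.2}(ii),(iv), and part (ii) by observing that $t\mapsto|t-s|$ preserves or reverses the zero count according to $s\in\{0,1\}$ and then invoking Lemma~\ref{l3.2}(iii). Your extra remark about needing the explicit permutation from the proof of Lemma~\ref{l3.2}(iii) (so that $\sigma(z^{t(i)})=z^{t(j)}$, not merely $\sigma(\overline{u_{t(i)}})=\overline{u_{t(j)}}$) is a point the paper's own proof glosses over, and it is handled correctly here.
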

\begin{proof}
	(i) Let $ i = {|t-s|} $ then $\sigma(H_i) = \sigma(\overline{u_t})\sigma({z^s}) - 
	\sigma(\overline{u_s})\sigma({z^t})$. It follows from Lemma \ref{l3.2}(iv) that 
	$ \sigma({z^s}) = {z^s}$ and $ \sigma(\overline{u_s}) = \overline{u_s}$. Then 
	$\sigma(H_i) = \sigma(\overline{u_t}){z^s} - 	\overline{u_s}\sigma({z^t})$ and it follows from 
	Lemma \ref{l3.2}(ii) that there is $ l \in E $ such that $ \sigma({z^t}) = 
	{z^l}$ and $ \sigma(\overline{u_t}) = {\overline{u_l}}$. Therefore, $\sigma(H_i) = \overline{u_l}{z^s} - 
		\overline{u_s}{z^l}$  and we can put $ j = {|l - s|}$.
		
	(ii) Suppose that  $ i = {|t-s|}$ and $ j = {|l-s|}$. At first we note that $i$ and $j$ have the same number of zeros
(unities) if and only if $t$ and $l$ have the same number of zeros (unities) . If  $ s = 0 $ then the assertion is evident. If
$ s = 1 $ then the number of zeros of $i$ and $j$  is equal to the number of unities of $t$ and $l$ and the assertion follows.
Thus, $i$ and $j$ have the same number of zeros (unities) if and only if $t$ and $l$ have the same number of zeros (unities)
and, by Lemma \ref{l3.2}(iii) if and only if there is  $ \sigma \in S_d$   such that   $ \sigma(\overline{u_t}) = \overline{u_l}$. As it was
shown in (i), $\sigma(H_i) = \sigma({\overline{u_t}}){z^s} - 	{\overline{u_s}}\sigma({z^t})$ and hence $ \sigma(H_i) = H_j =  {\overline{u_l}}{z^s} - 
{\overline{u_s}}z^l$ if and only if  $ \sigma(\overline{u_t}) = \overline{u_l}$. Thus, the assertion follows.
\end{proof}

We now provide another application of Proposition~\ref{p3.2} for $H$ of a special form related to box splines.

Evidently, the polynomial $S(z)=(1 + z^{\xi_1})   \cdots   (1 + z^{\xi_\ell})$, with $\xi_j\in\ZZ^d$,
 has zeros on the unit torus.
However, for any positive real number $\alpha $ the polynomial 
$$ {V_\alpha }(z) = \overline {(\alpha+(1 + z^{\xi_1}))   \cdots   (\alpha+(1 + z^{\xi_\ell}))} $$
has no zeros on the unit torus and hence  ${V_\alpha }(z)$ is invertible in $\cL({\ell_1})$. Evident
transformations show that the polynomial ${V_\alpha }(z)$ may be presented in the form  
\begin{equation}\label{eq3.2}
{V_\alpha }(z) = \overline {S(z)}  + \alpha {R_\alpha }(z),                              
\end{equation}
where each coefficient of the Laurent polynomial ${R_\alpha }(z)$ can be presented as a polynomial of
the variable $\alpha $ with integer coefficients that depend only on the polynomial $S$. 

\begin{lemma}\label{l3.1}
 In the above notations, suppose that $H(z) = S(z)$. Then there is a positive real number $b$ such that for any positive real
number $\alpha  < b$ the polynomial $\rho (U(z)   {V_\alpha }(z)\,)$ is invertible in ${\cL_{\downarrow}}({\ell_1})$, where
$U(z) = cH(z)\Phi(z)$. 
 \end{lemma}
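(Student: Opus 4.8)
The plan is to exploit continuity and a limiting argument as $\alpha\to 0^+$. Set $U(z)=cH(z)\Phi(z)=cS(z)\Phi(z)$. By \eqref{eq3.2} we have $V_\alpha(z)=\overline{S(z)}+\alpha R_\alpha(z)$, and therefore
$$
\rho\big(U(z)\,V_\alpha(z)\big)=\rho\big(cS(z)\Phi(z)\overline{S(z)}\big)+\alpha\,\rho\big(cS(z)\Phi(z)R_\alpha(z)\big).
$$
The first term equals $c\,\rho\big(\Phi(z)|S(z)|^2\big)$ on the torus, which by Proposition~\ref{p3.1}(iii) (with $H=S$) is exactly $c\,\Phi(z^2)$ up to the normalizing constant; precisely, $\rho\big(U(z)\overline{H(z)}\big)=\rho_U(H(z))=2^{-d}c\,\Phi(z^2)$. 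By Proposition~\ref{p3.1}(ii) there is a positive constant $A$ with $|\Phi(z)|\ge A$ on $\TT^d$, hence $|\Phi(z^2)|\ge A$ on $\TT^d$ as well, so the $\alpha=0$ term of $\rho(U(z)V_\alpha(z))$ is bounded below in modulus by $|c|\,2^{-d}A>0$ uniformly on the torus.

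Next I would control the perturbation term uniformly in $\alpha$ for small $\alpha$. The key structural fact from \eqref{eq3.2} is that every coefficient of $R_\alpha(z)$ is a polynomial in $\alpha$ with integer coefficients depending only on $S$, and $R_\alpha$ has a fixed finite support (independent of $\alpha$) since it is a sub-sum of the expansion of the product $\prod_j(\alpha+(1+z^{\xi_j}))$. Consequently, for $\alpha$ ranging over the compact interval $[0,1]$, say, the $\ell_1$-norm (equivalently, the sup-norm on $\TT^d$) of the Laurent polynomial $cS(z)\Phi(z)R_\alpha(z)$ — hence of its image under the bounded linear map $\rho$, which by \eqref{rho} is an average of $2^d$ evaluations and so does not increase the sup-norm — is bounded by some constant $C$ independent of $\alpha$. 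Therefore on $\TT^d$,
$$
\big|\rho\big(U(z)V_\alpha(z)\big)\big|\ \ge\ |c|\,2^{-d}A\ -\ \alpha\,C,
$$
and choosing $b:=\min\{1,\;|c|\,2^{-d}A/(2C)\}$ gives $|\rho(U(z)V_\alpha(z))|\ge |c|\,2^{-d-1}A>0$ on the whole torus for every $\alpha<b$.

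Finally, since $\rho(U(z)V_\alpha(z))\in\cL_\downarrow(\ell_1)$ (indeed it is a Laurent polynomial in $\cL_\downarrow$, as $\rho$ maps into $\cL_\downarrow$ and $U,V_\alpha\in\cL$) and it has no zeros on the unit torus, Wiener's lemma yields that it is invertible in $\cL_\downarrow(\ell_1)$, which is the assertion. The main obstacle to watch is the uniformity of the bound on the perturbation term across $\alpha$: one must genuinely use that $R_\alpha$ has fixed support and polynomial-in-$\alpha$ coefficients so that no constant blows up as more factors $\xi_j$ are involved — but $\ell$ is fixed for a given box spline, so this is really just continuity of a polynomial expression in $\alpha$ on a compact interval, and the argument is routine once that is noted. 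A minor point is to confirm that $\rho$ does not increase norms, which follows directly from the averaging formula \eqref{rho}.
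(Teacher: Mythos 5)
Your proposal follows essentially the same route as the paper's proof: the same decomposition $\rho(UV_\alpha)=2^{-d}c\,\Phi(z^2)+\alpha\,\rho(UR_\alpha)$ via \eqref{eq3.2} and Proposition~\ref{p3.1}(iii), the same uniform bound $|\rho(UR_\alpha)|\le C$ for $\alpha\in[0,1]$ coming from the fixed support and polynomial-in-$\alpha$ coefficients of $R_\alpha$, the same lower bound from Proposition~\ref{p3.1}(ii), and the same conclusion that absence of zeros on the torus gives invertibility in $\cL_\downarrow(\ell_1)$. The only differences are cosmetic (you make the appeal to Wiener's lemma and the norm-nonincreasing property of $\rho$ explicit, and insert a harmless factor of $2$ in the choice of $b$); the argument is correct.
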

 \begin{proof}
Evidently, we can assume that $0< \alpha  < 1$. It follows from \eqref{eq3.2} that $\rho (U(z)   {V_\alpha }(z)\,) = \rho (U(z) \overline {H(z)} ) + \alpha \rho (U(z)   {R_\alpha }(z))$. Then it follows from Proposition \ref{p3.1}(iii) that 
\begin{equation}\label{eq3.3}
                             \rho (U(z){V_\alpha }(z)) = 
 {2^{-d}}c\Phi({z^2}) + \alpha \rho (U(z)   {R_\alpha }(z)).                    
\end{equation}
It is also not difficult to show that each coefficient of the polynomial $\rho (U(z)   {R_\alpha }(z))$ can be presented
as a polynomial of the variable $\alpha $ with  coefficients which depend only on the polynomials $U(z)$ and $H(z)$.
This easily implies that there is a positive real number $r$ such that $\left| {\rho (U(z) {R_\alpha }(z))} \right| \le r$ on
the unit torus for any $\alpha  < 1$. On the other hand, by Proposition~\ref{p3.1}(ii), there is a positive real
number $A$ such that $\left| {\Phi(z)} \right| \ge A$. So, if we take $\alpha < b := {2^{ - d}c}{r^{ - 1}}A$,
then $\left| {2^{ - d}c}\Phi({z^2}) \right| > \left| {\alpha \rho (U(z)   {R_\alpha }(z)} \right|$ and it follows
from \eqref{eq3.3} that $\rho (U(z)   {V_\alpha }(z)\,) = {2^{ - d}}\Phi({z^2}) + \alpha \rho (U(z)   {R_\alpha }(z))$ has no
zeros on the unit torus. Therefore, the polynomial $\rho (U(z)   {V_\alpha }(z)\,)$ is invertible in
${\cL_{\downarrow}}({\ell_1})$. 
\end{proof}

\begin{corollary}\label{c3.1} 
In the above notation, suppose that $H(z) = S(z)$. Let 
$D(z) = cH(z)\Phi(z)V_{\alpha}(z) = \sum\nolimits_{j \in E}{{d_j}{z^j}} $, let ${D_j}(z) = {d_j} - {d_0}{z^{-j}}$ and let 
${H_j}(z) = \overline {{V_\alpha }(z){D_j}(z)} $, where $j \in E\setminus \{ 0\} $. 
Then there is a positive real number $b$ such that for any positive real number $\alpha  < b$ we have:
\begin{romanlist}[(i)]
\item  ${\left\langle {H,{H_j}(z)} \right\rangle _\Phi} = 0$ for all $j \in E\setminus \{ 0\} $ ;
\item  polynomials $\{ {H_j}(z):j \in E\setminus \{ 0\} \} $ and $H(z)$ form a basis of $\cL({\ell_1})$ over ${\cL_{\downarrow}}({\ell_1})$. 
\end{romanlist}
\end{corollary}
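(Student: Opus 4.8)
The plan is to derive Corollary~\ref{c3.1} as a direct specialization of Proposition~\ref{p3.2}, using Lemma~\ref{l3.1} to supply the nonvanishing hypothesis required there. First I would recall that $V_\alpha(z)$ is a polynomial in $\cL$ that, as noted before \eqref{eq3.2}, has no zeros on the unit torus, hence is invertible in $\cL(\ell_1)$ by Wiener's lemma. This is exactly the first hypothesis of Proposition~\ref{p3.2} applied with $V(z)=V_\alpha(z)$ and $U(z)=cH(z)\Phi(z)$, so that $D(z)=cH(z)\Phi(z)V_\alpha(z)=\sum_{j\in E}d_jz^j$ as in the statement of the corollary.

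The remaining hypothesis of Proposition~\ref{p3.2} is that $d_0\ne0$ is invertible in $\cL_\downarrow(\ell_1)$. By the formula \eqref{rho} for $\rho$, the coefficient $d_0$ equals $\rho(D(z))=\rho(U(z)V_\alpha(z))$. Since $H(z)=S(z)$ by assumption, Lemma~\ref{l3.1} provides a threshold $b>0$ such that for every $\alpha\in(0,b)$ the polynomial $\rho(U(z)V_\alpha(z))$ is invertible in $\cL_\downarrow(\ell_1)$; in particular it is nonzero. Thus for $\alpha<b$ both hypotheses of Proposition~\ref{p3.2} hold.

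It then remains only to read off the two conclusions. Part (i) of Proposition~\ref{p3.2} gives $\langle H,H_j(z)\rangle_\Phi=0$ for all $j\in E\setminus\{0\}$, where $H_j(z)=\overline{V_\alpha(z)D_j(z)}$ and $D_j(z)=d_j-d_0z^{-j}$ — which is precisely the definition adopted in the corollary. Part (ii) of Proposition~\ref{p3.2} gives that $\{H_j(z):j\in E\setminus\{0\}\}$ together with $H(z)$ is a basis of $\cL(\ell_1)$ over $\cL_\downarrow(\ell_1)$. These are exactly statements (i) and (ii) of the corollary, with the same constant $b$ as furnished by Lemma~\ref{l3.1}.

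There is essentially no obstacle here: the corollary is a packaging of Proposition~\ref{p3.2} in the box-spline setting, and all the analytic work — the perturbation argument showing $\rho(U(z)V_\alpha(z))$ stays bounded away from zero for small $\alpha$ — has already been carried out in Lemma~\ref{l3.1}. The only point requiring a line of care is the bookkeeping identification $d_0=\rho(U(z)V_\alpha(z))$ via \eqref{rho}, and the observation that the $D_j$ and $H_j$ of the corollary coincide verbatim with those of Proposition~\ref{p3.2}; once these are noted, the proof is a two-sentence citation of the proposition and the lemma.
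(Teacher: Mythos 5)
Your proposal is correct and follows essentially the same route as the paper: the paper's own proof likewise invokes Lemma~\ref{l3.1} to obtain the threshold $b$ for which $d_0=\rho(U(z)V_\alpha(z))$ is invertible in $\cL_\downarrow(\ell_1)$, and then cites Proposition~\ref{p3.2} to conclude. Your additional remarks (that $V_\alpha$ is invertible in $\cL(\ell_1)$ by the absence of zeros on the torus, and the identification of $d_0$ via \eqref{rho}) are exactly the bookkeeping the paper leaves implicit.
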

\begin{proof} 
By Lemma \ref{l3.1}, 
there is a positive real number $b$ such that for any positive real number  $\alpha  < b$ the polynomial such
that ${d_0} = \rho (U(z)  {V_\alpha }(z)\,) \ne 0$ is invertible in ${\cL_{\downarrow}}({\ell_1})$, where $D(z) =
cH(z)\Phi(z)V(z) = \sum\nolimits_{j \in E} {{d_j}{z^j}} $. Therefore,  the assertion follows from Proposition \ref{p3.2}.  
\end{proof}

\section{A module structure related to the multiresolution analysis}\label{multilevel}

Having seen the prominent
role the module $\cL(\ell_1)$ over $\cL_\downarrow(\ell_1)$ plays in the prewavelet construction, in this section we suggest 
an extension of the module structure to the entire multiresolution analysis involving the full sequence of spaces $V_j$ rather
than a  single pair of two concequtive levels $V_{-1}\subset V_0$.\par
It is easy to check that $Q_{2} =\{ x^{q} : q=\frac{m}{2^{n}},\, \text{where}\, \, m,n\in {\mathbb Z}\} $ is a torsion-free
multiplicative abelian  group. Then   $G=\times_{i=1}^{d} (Q_{2})_{i} $ is a minimax group because it has a finite series each
of whose factor is either cyclic or quasi-cyclic. 

Let $\cL_j (\ell_p)=\Big\{\sum_{k\in\ZZ^d}a_k  z^{k2^{-j}}: a\in \ell_p(\ZZ^d)\Big\}$, $1\le p\le\infty $, {and} 
$$\cL_j =\Big\{\sum_{k\in M}a_k z^{k2^{-j}} :  M\subset\ZZ^d \text{ finite, } a_k\in\CC\Big\},$$
where $j \in \mathbb{Z}$. Then $\cL_{0} =\cL$ and $\cL_{j} \le \cL_{j+1} $. It is not difficult to show that
$K=\bigcup _{j\in {\mathbb Z}}\cL_{j}  $ is a commutative domain which is isomorphic to the group ring ${\mathbb C}G$.

 Let $U_{j} =span\left\langle \varphi(2^{-j} \cdot -k)|k\in {\mathbb Z}\right\rangle $, where $j \in \mathbb{Z}$, then $U_{j}
\le U_{j+1} $. Put $U=\bigcup _{j\in {\mathbb Z}}U_{j}  $,  $V_{j} =[U_{j} ]_{l^{2} } $ and $V=\bigcup _{j\in {\mathbb{
Z}}}V_{j}  $. Then $[U]_{l^{2} } =\bigcup _{j\in {\mathbb Z}}V_{j}  =V$. 

The properties of the Fourier transformation show that 
$$\widehat{\varphi(2^{j} \cdot x-k)}(\omega ) =  \frac{1}{2^{j} } (e^{-i\omega } )^{\frac{k}{2^{j} } }
\widehat{\varphi(x)}(\frac{\varpi }{2^{j} } )=\frac{1}{2^{j} } 
z^{\frac{k}{2^{j} } }\widehat{\varphi(x)}(\frac{\varpi }{2^{j}} ),$$
where $z=e^{-i\omega } $. This easily implies that 
$\widehat{U_{j}}=\left\{F(z^{2^{-j} } )\widehat{\varphi(x)}(\frac{\varpi }{2^{j} } ):F(z^{2^{-j} } )\in \cL_{j} \right\}$ 
and hence $\widehat{U_{j} }$ is a cyclic $\cL_{j} $-module generated by $\widehat{\varphi(x)}(\frac{\varpi }{2^{j} } )$. Then,
as $\widehat{U}=\bigcup _{j\in {\mathbb Z}}\widehat{U_{j} } $, it is not difficult to show that $\widehat{U}$ is a $K$-module
generated by $\widehat{\varphi(x)}(\frac{\varpi }{2^{j}})$, where $j\in {\mathbb Z}$, and $K=\bigcup _{j\in {\ZZ}}\cL_{j}  $. 

The refinement equation shows that $\widehat{\varphi(x)}(\frac{\varpi }{2^{j-1} } )=lH(z^{2^{-j} }
)\widehat{\varphi(x)}(\frac{\varpi }{2^{j} } )$ for some integer $l$. Put $b_{j} =\prod _{s=1}^{j}lH(z^{2^{-s} } ) $. Let
$\bar{K}$ be the field of fractions of the domain $K$ and let $M$ be a $K$-submodule of $\bar{K}$ generated by the elements
$(b_{j} )^{-1} $, where $j \in {\mathbb Z}$. It is easy to see that  $(b_{j-1} )^{-1} =lH(z^{2^{-j} } )(b_{j} )^{-1} $. Since
the $K$-module $\widehat{U}$ is generated by $\widehat{\varphi(x)}(\frac{\varpi }{2^{j} } )$ with the relations
$\widehat{\varphi(x)}(\frac{\varpi }{2^{j-1} } )=lH(z^{2^{-j} } )\widehat{\varphi(x)}(\frac{\varpi }{2^{j} } )$, where $j\in
{\mathbb Z}$, it is not difficult to show that   there is a $K$-module isomorphism $\widehat{U}\cong M$. 

We note that the group ring $ \mathbb{C}G $ of an abelian torsion-free minimax group $ G $ has some algebraic properties similar to the properties of the Laurent polynomial ring $\cL$. In particular, the following analogue of Quillen-Suslin theorem holds for the group ring $ \mathbb{C}G $. \par

\begin{proposition}\label{p7.1}
Let $G$ be an abelian torsion-free minimax group and $ U $  be a projective $\mathbb{C} G$-module which is a direct summand of
a finitely generated free $\mathbb{C} G$-module. Then $ P $ is a finitely generated free $\mathbb{C} G$-module. 
\end{proposition}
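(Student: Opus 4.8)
The plan is to reduce the statement to known structural results about group rings of abelian torsion-free minimax groups, following the same strategy that underlies the classical Quillen--Suslin theorem: a projective module is free once one knows it is stably free and that stably free modules of the relevant rank are free. Since $G$ is an abelian torsion-free minimax group, $\mathbb{C}G$ is a Noetherian commutative domain of finite Krull dimension, and $G$ has a finite subnormal series $1=G_0\le G_1\le\cdots\le G_m=G$ whose factors are infinite cyclic or quasicyclic (of type $\mathbb{Z}(p^\infty)$). The idea is to induct along this series, handling the cyclic steps by a Quillen-type localization/patching argument (this is exactly the Laurent-polynomial extension $R\mapsto R[t,t^{-1}]$, for which freeness of projectives over $R[t,t^{-1}]$ is classical when it holds over $R$), and the quasicyclic steps by writing $\mathbb{C}\big(H\times\mathbb{Z}(p^\infty)\big)$ as a directed union $\varinjlim \mathbb{C}\big(H\times C_n\big)$ of Laurent-polynomial-type extensions of $\mathbb{C}H$ and passing to the limit.

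First I would record the reduction: by hypothesis $U$ is a direct summand of a finitely generated free module $\mathbb{C}G^n$, so $U\oplus U'\cong\mathbb{C}G^n$ for some finitely generated projective $U'$; it therefore suffices to prove every finitely generated projective $\mathbb{C}G$-module is free, and in fact it suffices to prove such modules are stably free, because $\mathbb{C}G$ has finite Krull dimension and Bass's cancellation theorem (stably free of rank exceeding the stable range is free) together with the fact that abelian group rings have no nontrivial units obstructing $K_0$-triviality will finish the job. Next, I would establish $K_0(\mathbb{C}G)=\mathbb{Z}$ (equivalently $\widetilde K_0(\mathbb{C}G)=0$) by induction on the length $m$ of the minimax series. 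The base case $m=0$ is $K_0(\mathbb{C})=\mathbb{Z}$. For the inductive step with top factor infinite cyclic, $\mathbb{C}G\cong (\mathbb{C}H)[t,t^{-1}]$ and the Bass--Heller--Swan / fundamental theorem of algebraic $K$-theory gives $\widetilde K_0\big((\mathbb{C}H)[t,t^{-1}]\big)=\widetilde K_0(\mathbb{C}H)\oplus (\text{Nil terms})$, and the Nil terms vanish because $\mathbb{C}H$ is a regular Noetherian ring (regularity of $\mathbb{C}H$ being itself maintained along the induction, since regularity is preserved under the two operations). For the quasicyclic step, $\mathbb{C}G=\varinjlim_n \mathbb{C}(H\times C_{p^n})$ where each inclusion $\mathbb{C}(H\times C_{p^n})\hookrightarrow\mathbb{C}(H\times C_{p^{n+1}})$ is again a map for which $\widetilde K_0$ is preserved; since $K_0$ commutes with filtered colimits of rings, $\widetilde K_0(\mathbb{C}G)=\varinjlim\widetilde K_0\big(\mathbb{C}(H\times C_{p^n})\big)=0$.

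The main obstacle I anticipate is the quasicyclic step: a quasicyclic group is not finitely generated, so $\mathbb{C}G$ is not Noetherian in that direction and one cannot apply the Quillen--Suslin/Bass--Heller--Swan machinery directly. The resolution is the colimit argument just sketched, but it requires care on two points — first, that a finitely generated projective $\mathbb{C}G$-module is already defined over some finite stage $\mathbb{C}(H\times C_{p^n})$ (true because finitely generated projectives, being summands of finitely generated frees cut out by idempotent matrices, are finitely presented and such data descends along a filtered colimit), and second, that the relevant inclusions $\mathbb{C}(H\times C_{p^n})\hookrightarrow\mathbb{C}(H\times C_{p^{n+1}})$ induce isomorphisms on $\widetilde K_0$. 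The second point I would handle by noting $C_{p^{n+1}}$ is a cyclic extension of $C_{p^n}$ and analyzing the ring extension explicitly, or alternatively by invoking that $\mathbb{C}[C_{p^n}]\cong\mathbb{C}^{p^n}$ (since $\mathbb{C}$ contains all roots of unity), so that $\mathbb{C}(H\times C_{p^n})\cong\prod^{p^n}\mathbb{C}H$ and the transition maps become, up to the product decomposition, diagonal inclusions, for which the $\widetilde K_0$ behaviour is transparent. Finally, with $\widetilde K_0(\mathbb{C}G)=0$ in hand, a finitely generated projective $P$ is stably free, and Bass cancellation over the finite-dimensional ring $\mathbb{C}G$ upgrades this to $P$ free, completing the proof.
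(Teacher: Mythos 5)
Your overall instinct---reduce to finitely generated stages where Quillen--Suslin applies---is the right one, and is in fact the engine of the paper's proof, but your execution has three concrete problems. First, $\mathbb{C}G$ is \emph{not} Noetherian when $G$ has a quasicyclic factor (for abelian $G$ the group ring is Noetherian precisely when $G$ is finitely generated), so your opening framing and, more importantly, your appeal to Bass's cancellation theorem at the end are unavailable; you yourself acknowledge the failure of Noetherianity later, which leaves the final step unsupported. Second, a torsion-free $G$ with $G/H\cong\mathbb{Z}(p^\infty)$ is never of the form $H\times\mathbb{Z}(p^\infty)$: the extension does not split and $G$ contains no nontrivial finite subgroups, so the rings $\mathbb{C}(H\times C_{p^n})\cong\prod^{p^n}\mathbb{C}H$ you propose to take a colimit over do not sit inside $\mathbb{C}G$ (indeed they are not domains, while $\mathbb{C}G$ is). The correct filtration is by the preimages $G_n$ of the finite subgroups of the quotient; these are again finitely generated free abelian, so every stage is a Laurent polynomial ring. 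Third, and most seriously, even granting $\widetilde K_0(\mathbb{C}G)=0$, the implication ``stably free implies free'' is exactly the hard content of Quillen--Suslin and does not follow here from a vanishing $K_0$ obstruction plus cancellation: Bass cancellation needs Noetherianity (or at least finite stable rank) and only covers ranks above the dimension, so low-rank projectives are left untreated.

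The repair is to use your own observation---that a finitely generated projective over a filtered union of rings is extended from some stage, since it is cut out by an idempotent matrix with finitely many entries---to transport \emph{freeness} directly rather than a $K_0$ class. Since $G$ is countable torsion-free abelian, it is the ascending union of finitely generated free abelian subgroups $H$; the projective $U$ is extended from a finitely generated projective over some $\mathbb{C}H\cong\cL$, that module is free by the full Quillen--Suslin theorem, and the extended module is then free of the same rank. This is essentially the paper's argument: it splits the free generators of $W=V\oplus U$ over a sufficiently large finitely generated subgroup $H$, applies Quillen--Suslin over $\mathbb{C}H$, and verifies by a dimension count over the field of fractions of the domain $\mathbb{C}G$ that the induced generators of $U$ remain a free basis. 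Note that this route uses only countability and torsion-freeness of $G$, not the minimax structure your induction is organized around.
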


\begin{proof}
Let $ W$ be a finitely generated free $\mathbb{C}G$-module such that $ W = V \oplus  U $.  Let $A= \{a_1,...,\, a_n\}$  be a
finite set of free generators of the  $\mathbb{C}G$-module $W$. Then $a_i= b_i +c_i$, where $b_i \in V$ and $c_i \in U$ for
each $i \in \{1,...,\, n\}$.  Put $B= \{b_1,...,\, b_n\}$ and $C= \{c_1,...,\, c_n\}$. Since the group $ G $ is countable, it
is a union of an ascending chain $\{G_i \mid i \in \mathbb{N}\}$ of its finitely generated subgroups $G_i$. Therefore, $W=
\cup_{i \in \mathbb{N}}A\mathbb{C}G_i $ and hence there is a finitely generated subgroup $H$  of  $G$ such that $B,
C\subseteq   A\mathbb{C}H$. It easily implies that $ A\mathbb{C}H =  B\mathbb{C}H \oplus   C\mathbb{C}H$. It is well known
that $\mathbb{C}H\cong \cL$ and hence, by Quillen-Suslin theorem,   $B\mathbb{C}H$ and $C\mathbb{C}H$ are free  $\mathbb{C}
H$-modules. Let $\tilde{B}$ and $\tilde{C}$ be sets of free generators for  $B\mathbb{C} H$ and $C\mathbb{C} H$ respectively
then $|\tilde{B}|+|\tilde{C}| =n $.  It is not difficult to show that $ V= \tilde{B}\mathbb{C}G$ and $ U=
\tilde{C}\mathbb{C}G$. Since the group $G$ is torsion-free abelian, $\mathbb{C}G$ is a domain and hence it has a field of
fraction $\mathbb{F}$. Then $dim_{ \mathbb{F}}({V\otimes_{\mathbb{C}G}\mathbb{F}})\leq |\tilde{B}|$, $dim_{
\mathbb{F}}({U\otimes_{\mathbb{C}G}\mathbb{F}})\leq |\tilde{C}|$ and $dim_{ \mathbb{F}}({W\otimes_{\mathbb{C}G}\mathbb{F}})=
|A| = n$. Since $W\otimes_{\mathbb{C}G}\mathbb{F}  = ({V\otimes_{\mathbb{C}G}\mathbb{F}})\oplus
({U\otimes_{\mathbb{C}G}\mathbb{F}})$ and $|\tilde{B}|+|\tilde{C}| =n $, it follows that $dim_{
\mathbb{F}}({V\otimes_{\mathbb{C}G}\mathbb{F}})= |\tilde{B}|$ and $dim_{ \mathbb{F}}({U\otimes_{\mathbb{C}G}\mathbb{F}}) =
|\tilde{C}|$. Therefore, elements of $|\tilde{C}|$ are linearly independent over $\mathbb{C}G$ and, as $ U=
\tilde{C}\mathbb{C}G$, we can conclude that $ \tilde{C} $ is a set of free generators of $U$.  
\end{proof}

\section{Construction of box spline prewavelets of small support}
\label{Section 7}

Consider the multiresolution analysis generated by a box spline $\varphi=M_\Xi$ defined in
Section~\ref{prewBox}, assuming that $\Xi$ is unimodular.
Then $H$ and $\Phi$ are polynomials given by \eqref{Hbox} and \eqref{Phibox}, respectively.
In this section we construct Laurent polynomials $H_s$, $s \in E \setminus \{ 0\}$, such that
$ \{ H \} \cup  \{ H_s : s \in E \setminus \{ 0\} \}$ is a basis of the module $\cL(\ell_1)$ 
over $\cL_{\downarrow}(\ell_1)$, and $ {\left\langle {H,H_s} \right\rangle_\Phi} = 0$ for all $s\in E \setminus \{ 0\}\}$ 
in some important special cases of $\Xi$. 

The method of Section~\ref{Section 3} provides two different waysto construct box spline prewavelets, 
see Corollaries~\ref{c3.2} and \ref{c3.1}. The construction of Corollary~\ref{c3.1} is applicable to all box splines with a 
unimodular $\Xi$. However, it leads to prewavelets with relatively large supports. 
In contrast, Corollaries~\ref{c3.2} is only available if at least one of the polynomials $u_i$ is invertible in 
$\cL_{\downarrow}(\ell_1)$. Yet, in all special cases considered below this condition is satisfied and 
leads to prewavelets with remarkably small mask supports.

As in Section~\ref{Section 3}, we set $U := c H\Phi$ for an appropriate nozero coefficient $c\in \mathbb{C}$. 
Below we will always choose $c$ as the smallest positive integer such that all coefficients of $ U $
are integers.

Consider the expansion $U(z)  = \sum_{s \in E} {{u_s}{z^s}} $. 
It is proved below that in the considered cases there exists  $ {s_0 \in E} $ such that the polynomial   
$u_{s_0} $ has no zeros on the unit torus, i.e.\ $ u_{s_0} $ is invertible in $ \cL_{\downarrow}(\ell_1) $. Therefore,  
by Corollary \ref{c3.2}, we can choose 
${H_{|r-s_0|}} = \overline{u_r} {z^{ s_0}} - \overline{u_{s_0}}{z^{r}} \quad\text{for all}\quad r \in E\setminus \{ s_0\}$,
which defines all $H_s$ with $s\in E\setminus \{ 0\}$. To simplify the notation we set $W(z) = \overline{U(z)}$  
and $w_j = \overline{u_j}$. Then
\begin{equation}\label{eq4.1}
{H_{|r-s_0|}} = {w_r} {z^{ s_0}} - {w_{s_0}}{z^{r}} \quad\text{for all}\quad r \in E\setminus \{ s_0\}, 
\end{equation}

Since 
$$W(z)=\overline{U(z)} = %
\overline{cH(z)} \Phi(z) =\sum_{j \in E} {\overline{u_j}{z^{-j}}}, $$ 
we can easily obtain   the 
polynomials $w_j = \overline{u_j} $ after calculating $\Phi(z)$ and expanding the product  in
$$\overline{U(z)} =  %
\overline{c}2^{-\ell}(1 + {z^{-\xi_1}})  \cdots  (1 + z^{-\xi_\ell})\, \Phi(z).$$  

For any $F \in \cL$, the set of monomials of $F$ with nonzero coefficients is said to be the support $ \supp F $ of the
polynomial $ F $. 

In our presentation of basis polynomials we also use the results of Proposition 
\ref{3.3} about relations between  the polynomials determined by 
 (\ref{eq4.1}) and permutations of variables. The assumption that $ \sigma(H(z)) = H(z) $ and $ \sigma(\Phi(z)) = \Phi(z) $ 
for all $ \sigma \in S_d$ is satisfied in all cases considered below, which is easily seen by Lemma~\ref{permbox}. 
For example, in the two-dimensional case $H_{0,1}(x,y)=H_{1,0}(y,x)$ and  $H_{1,1}(x,y)=H_{1,1}(y,x)$. 

For the calculations with multivariate polynomials we used a computer algebra system SageMath  (\url{https://www.sagemath.org/})

\subsection{Piecewise linear prewavelets on the three-direction mesh}\label{linear}
Let
$$
\Xi=\begin{bmatrix}
1 & 0 & 1\\
0 & 1 & 1
\end{bmatrix},$$
in which case $\varphi=M_\Xi$ is a bivariate Courant hat function and $V_0$ 
consists of square integrable continuous piecewise linear functions on the three direction mesh \cite{BHR93}.
Using the notation $x=z_1$, $y=z_2$, we have 
\begin{align*}
H &= \tfrac18(1+x)(1+y)(1+ xy),\\ 
\Phi &= \tfrac1{12}(6+ x+ x^{-1}+ y+ y^{-1}+ x^{-1} y^{-1}+ x y), \quad \text{see \cite[Section 6]{JM}}.
\end{align*}
We choose the coefficient $ c = 96$. %
A simple calculation shows that 
$$W = 
\overline{cH}\Phi = w_{0,0} +w_{1,0}x^{-1}+w _{0,1}y^{-1}+w _{1,1}x^{-1}y^{-1},
\qquad w_{s,t}\in \cL_{\downarrow},$$ 
where %
\begin{align*} 
w_{0,0} &= 10 +2y^{-2} + 2x^{-2} + 10x^{-2}y^{-2},\\ 
w_{1,0} &= 2x^2 + 10 + 10y^{-2} + 2x^{-2}y^{-2},\\
w_{0,1} &= 2y^2 + 10 + 10x^{-2} + 2x^{-2}y^{-2},\\
w_{1,1} &= x^2y^2 + x^2 + y^2 + 18 + y^{-2} + x^{-2} + x^{-2}y^{-2}, 
\end{align*}
with $w_{0,1}(x,y)=w_{1,0}(y,x)$, $w_{0,0}(y,x)=w_{0,0}(x,y)$ and $w_{1,1}(y,x)=w_{1,1}(x,y)$ in agreement with 
Lemma~\ref{l3.2}.

\begin{proposition}\label{7.1} 
$u_{1,1}$ has no zeros on the unit torus. 
\end{proposition}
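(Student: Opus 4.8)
The plan is to show directly that $u_{1,1}(z) \neq 0$ for all $z \in \TT^2$, i.e. that $w_{1,1} = \overline{u_{1,1}}$ has no zeros on the unit torus, since the two statements are equivalent under conjugation. Recall that $w_{1,1} = x^2y^2 + x^2 + y^2 + 18 + y^{-2} + x^{-2} + x^{-2}y^{-2}$. On $\TT^2$ we substitute $x = e^{-i\alpha}$, $y = e^{-i\beta}$ and group conjugate pairs of monomials into cosines. The terms $x^2y^2$ and $x^{-2}y^{-2}$ combine to $2\cos(2\alpha + 2\beta)$; the terms $x^2$ and $x^{-2}$ give $2\cos 2\alpha$; the terms $y^2$ and $y^{-2}$ give $2\cos 2\beta$. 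Hence on the torus
\begin{equation*}
w_{1,1} = 18 + 2\cos 2\alpha + 2\cos 2\beta + 2\cos(2\alpha + 2\beta).
\end{equation*}

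Next I would bound this expression from below. Each cosine lies in $[-1,1]$, so the three cosine terms together are at least $-6$, giving $w_{1,1} \geq 18 - 6 = 12 > 0$. This crude bound already suffices, so no delicate analysis of the critical points of the trigonometric sum is actually needed. (If one wanted the sharp lower bound one could note that $\cos 2\alpha + \cos 2\beta + \cos(2\alpha+2\beta) \geq -3/2$ by the standard identity $1 + 2(\cos u + \cos v + \cos(u+v)) = |1 + e^{iu} + e^{i(u+v)}|^2/\ldots$ type computation, but this refinement is unnecessary here.) Therefore $w_{1,1}$ is bounded away from zero on $\TT^2$, hence so is $u_{1,1} = \overline{w_{1,1}}$, and in particular $u_{1,1}$ has no zeros on the unit torus.

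I do not anticipate a real obstacle: the only thing to be careful about is the bookkeeping in pairing up the seven monomials of $w_{1,1}$ correctly (six of them pair into three cosines, the seventh is the constant $18$) and confirming that the coefficient pattern is symmetric so that each pair really does have equal coefficients and collapses to a real cosine. Once that is done, the positivity is immediate from $|\cos| \le 1$. An alternative, purely algebraic route would be to multiply $w_{1,1}$ by $x^2 y^2$ to clear denominators and argue about the resulting genuine polynomial $x^4 y^2 + x^4 y^4 + x^2 y^4 + 18 x^2 y^2 + x^2 + 1 + y^2$ having no zeros on $\TT^2$, but the trigonometric estimate is the cleanest and is what I would write up.
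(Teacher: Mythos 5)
Your proof is correct and is essentially the paper's own argument: both reduce to the observation that $w_{1,1}$ is real on $\TT^2$ and that its six non-constant unit-coefficient monomials contribute at most $6$ in absolute value, giving $w_{1,1}\ge 18-6=12>0$. Writing the pairs explicitly as cosines is just a more explicit form of the paper's bound $|w_{1,1}-18|\le 6$.
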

\begin{proof}
We observe that $\overline{w_{1,1}}= w_{1,1}= u_{1,1} $ on the unit torus.
Hence $u_{1,1}$ takes only  real values there. As 
$|x| = |y| = 1$ on the unit torus, it is not difficult to notice that $|w_{1,1} - 18|\leq  6$, 
 which implies that $u_{1,1}\ge12$. 
Thus, we conclude that $u_{1,1}$ has no zeros on the unit torus.  
\end{proof}

Thus, by Corollary  \ref{c3.2}, we apply (\ref{eq4.1}) and obtain generators of prewavelets in the form
\begin{align*} 
H_{1,0} & =w_{0,1}xy - w_{1,1}y \\
& = -x^2y^3 + 2xy^3 - x^2y - y^3 + 10xy - 18y + 10x^{-1}y - y^{-1}- x^{-2}y + 2x^{-1}y^{-1} - x^{-2}y^{-1},\\
H_{0,1}&=w_{1,0}xy - w_{1,1}x \\
& = -x^3y^2 + 2x^3y - x^3 - xy^2 + 10xy - 18x + 10xy^{-1} - xy^{-2} - x^{-1}+ 2x^{-1}y^{-1} - x^{-1}y^{-2},\\
H_{1,1}&=w_{0,0}xy - w_{1,1} \\
& = -x^2y^2 - x^2 + 10xy - y^2 + 2xy^{-1} - 18 + 2x^{-1}y - y^{-2} + 10x^{-1}y^{-1} -  x^{-2} - x^{-2}y^{-2}.
\end{align*}
The coefficients of these polynomials are illustrated in the following matrices whose entries contain the coefficients
corresponding to the monomials $x^iy^j$ with increasing $i$ from left to right and increasing $j$ from bottom to top, as in a
Cartesian coordinate system, and the coefficient of the constant monomial $x^0y^0$ is framed.
$$ \cH_{1,0}= 
\begin{bmatrix}
0&0&-1&2&-1\\
0&0&0&0&0\\
-1&10&-18&10&-1\\
0&0&\framebox{0}&0&0\\
-1&2&-1&0&0\\
\end{bmatrix},
\qquad 
\cH_{0,1}= 
\begin{bmatrix}
0&0&-1&0&-1\\
0&0&10&0&2\\
-1&\framebox{0}&-18&0&-1\\
2&0&10&0&0\\
-1&0&-1&0&0\\
\end{bmatrix},
$$
$$ \cH_{1,1}= 
\begin{bmatrix}
0&0&-1&0&-1\\
0&2&0&10&0\\
-1&0&\framebox{-18}&0&-1\\
0&10&0&2&0\\
-1&0&-1&0&0\\
\end{bmatrix}.
$$
In particular, in accordance with Proposition \ref{3.3}, $H_{0,1}(x,y)=H_{1,0}(y,x)$ and $H_{1,1}(x,y)=H_{1,1}(y,x)$. 
Note that the permutation of the variables leads to the transposition of the matrix 
relative to the anti-diagonal going through the framed entry.

We observe that $|\supp H_{(i,j)}| = 11$ for all ${(i,j)} \in E\setminus {(0,0)}$. 
Let $N$ denote the total number of nonzero coefficients of the polynomials $H_s$,
$$
N=\sum_{s\in E\setminus\{0\}}|\supp H_s|.$$
Thus, $N=33$ for our construction.
As it is adavantageous for the applications to have $N$ as small as possible, we compare this result with the
constructions known from the literature.

Polynomials $H_s$ of  prewavelets constructed in
\cite{KO95} have either 13 or 10  nonzero coefficients each, leading to $N=39$ or 30. 
Alternative constructions have significantly larger
$N$, with $N=69$ in \cite[Section 6]{JM} and \cite[Section 4]{Lai06}.

\subsection{Bivariate $C^1$ piecewise cubic box spline prewavelets}\label{semi-square}
Let
$$
\Xi=\begin{bmatrix}
1 & 1 & 0 & 0 & 1\\
0 & 0 & 1 & 1 & 1
\end{bmatrix}.$$
Then the box spline $M_\Xi$ and its translates are $C^1$ piecewise cubic functions on the three-direction mesh,
$$H =2^{-5} (1+x)^2 (1+y)^2  (1+xy),$$ 
and,
by \cite[p. 35]{RS}, 
\begin{align*}
\Phi =\frac{1}{10080} &\big(3194 + 31x^2 + 31x^{-2} + 31y^2 + 31y^{-2} + 5x^2y^2  + 5x^{-2}y^{-2}+1144x\\
&+1144x^{-1}+47xy^2+1xy^{-2}+1x^{-1}y^2+47x^{-1}y^{-2}+1144y+1144y^{-1}\\
&+47yx^2+1yx^{-2}+1y^{-1}x^2+47y^{-1}x^{-2}+814xy+814x^{-1}y^{-1}+178xy^{-1}+178x^{-1}y\big).
\end{align*}
With $c=2^5\cdot 10080$ we obtain
$W =\overline{cH} \Phi= w_{0,0} +w_{1,0}x^{-1}+w _{0,1}y^{-1}+w _{1,1}x^{-1}y^{-1}$ ,
where  
\begin{align*}
w_{0,0} &= 5x^2y^2 + 130x^2 + 130y^2 + 33x^2y^{-2} + 12303 + 33x^{-2}y^2 + 12048y^{-2} + 12048x^{-2}\\
&\quad + 215y^{-4} + 35193x^{-2}y^{-2} + 215x^{-4} + 3630x^{-2}y^{-4} + 3630x^{-4}y^{-2} + 1027x^{-4}y^{-4},\\
w_{1,0} &= 57x^2y^2 + 3136x^2 + 110y^2 + 1677x^2y^{-2} + 18425 + x^{-2}y^2 + 2x^2y^{-4} + 30692y^{-2}\\
&\quad + 3134x^{-2} +1677y^{-4} + 18425x^{-2}y^{-2} + x^{-4} + 3136x^{-2}y^{-4} + 110x^{-4}y^{-2} + 57x^{-4}y^{-4},\\
w_{1,1} &= 1027x^2y^2 + 3630x^2 + 3630y^2 + 215x^2y^{-2} + 35193 + 215x^{-2}y^2 + 12048y^{-2} + 12048x^{-2}\\
&\quad + 33y^{-4} +
12303x^{-2}y^{-2} + 33x^{-4} + 130x^{-2}y^{-4} + 130x^{-4}y^{-2} + 5x^{-4}y^{-4},
\end{align*}
and $w_{0,1}(x,y)=w_{1,0}(y,x)$.

\begin{proposition}\label{7.2}
 The polynomial $u_{0,0} =\overline{w_{0,0}}$ has no zeros on unit torus.  
 \end{proposition}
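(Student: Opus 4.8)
The plan is to imitate the strategy used in Proposition~\ref{7.1}, namely to show that on the unit torus the polynomial $u_{0,0}$ stays bounded away from $0$ because it is dominated by one large positive constant term. First I would note that, exactly as in the proof of Proposition~\ref{7.1}, on the unit torus $|x|=|y|=1$ we have $\overline{w_{0,0}}=u_{0,0}$ and, since $\Phi$ is real there and $\overline{cH}=cH$ restricted to the torus is a genuine complex number whose product with $\Phi$ gives $W$, the coefficient $u_{0,0}$ takes only real values on $\TT^2$. Indeed $u_{0,0}=\rho(U(z))=2^{-2}\sum_{s\in E}U((-1)^sz)$ by \eqref{rho}, and this is real on the torus by Proposition~\ref{p3.1}(i) applied to $U=cH\Phi$ together with the evenness built into $\rho$.

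Next I would estimate $u_{0,0}$ from below on $\TT^2$ by the triangle inequality. Writing $u_{0,0}$ as a Laurent polynomial in $x,y$ (obtained from $w_{0,0}$ by the substitution that conjugation induces on the torus, i.e.\ replacing $z^{-k}$-type terms appropriately), the constant (framed) coefficient should be the large one — here $12303$ in $w_{0,0}$, though one must be careful: $u_{0,0}=\rho(U)$ is the symmetrization of $U$ over $E$, so its actual constant term is computed from the expansion $U=\sum_{s\in E}u_sz^s$ and equals $u_{0,0}$ as a polynomial in $z^2$; I would first write this polynomial out explicitly. Then every non-constant monomial $z^m$ with $|m|=1$ on the torus, so the absolute value of the sum of all non-constant terms is at most the sum of the absolute values of their coefficients. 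If that sum is strictly smaller than the constant term, we conclude $u_{0,0}\ge (\text{constant term})-(\text{sum of other }|coefficients|)>0$ everywhere on $\TT^2$, hence $u_{0,0}$ has no zeros there and is therefore invertible in $\cL_\downarrow(\ell_1)$ by Wiener's lemma.

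The main obstacle, and the only genuinely non-routine point, is that the coefficients of $w_{0,0}$ sum in absolute value to $5+130+130+33+33+12048+12048+215+215+3630+3630+1027$ which already exceeds the would-be constant $12303$; so the crude triangle-inequality bound used for $u_{1,1}$ in Proposition~\ref{7.1} does \emph{not} directly work here. The fix I would pursue is to exploit cancellation: group the monomials of $u_{0,0}$ into completed squares or products of the form $(1+z^{\xi})$ coming from the box-spline factor structure $H=2^{-5}(1+x)^2(1+y)^2(1+xy)$, so that $u_{0,0}$ is manifestly a sum of a positive constant and terms of the form $|1+z^{\xi}|^2\ge 0$ (or real parts that are controllably signed). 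Concretely, since $w_{0,0}$ has the palindromic symmetry $w_{0,0}(x,y)=w_{0,0}(y,x)$ and pairs like $215y^{-4}+215x^{-4}$, $3630x^{-2}y^{-4}+3630x^{-4}y^{-2}$, etc., one expects $u_{0,0}$ to reorganize as a nonnegative combination of squared moduli of subproducts of $(1+x),(1+y),(1+xy)$, leaving a strictly positive residual constant; establishing that decomposition (most easily verified by the SageMath computation already mentioned in the paper) is the crux, after which positivity on $\TT^2$, and hence the absence of zeros and invertibility in $\cL_\downarrow(\ell_1)$, follow immediately.
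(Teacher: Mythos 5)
You have correctly diagnosed the one genuinely non-routine point: the crude triangle-inequality bound that works for $u_{1,1}$ in Proposition~\ref{7.1} fails here, since the non-constant coefficients sum to far more than the dominant constant (after recentering, $45447$ versus $35193$). But your proposed fix is not carried out: you only assert that ``one expects'' $u_{0,0}$ to reorganize as a nonnegative combination of squared moduli of subproducts of $(1+x),(1+y),(1+xy)$ plus a positive constant, and you explicitly defer ``establishing that decomposition'' to an unperformed computation. That decomposition is exactly the content of the proposition, so as written the proof has a hole at its crux. There is also a false intermediate claim: unlike $w_{1,1}$ in Section~\ref{linear}, the polynomial $w_{0,0}$ here is \emph{not} self-conjugate (its $x^{-2}y^{-2}$-coefficient is $35193$ while its $x^2y^2$-coefficient is $5$), so $u_{0,0}$ is not real-valued on $\TT^2$; one must work with $\Ree u_{0,0}$ rather than with $u_{0,0}$ itself. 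This is repairable but shows the analogy with Proposition~\ref{7.1} cannot be invoked wholesale.

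The missing idea is much more modest than a full sum-of-squares decomposition. Setting $e_{0,0}=u_{0,0}x^{-2}y^{-2}$ and $u=x^2$, $v=y^2$, one isolates only the three terms carrying the large coefficient $12048$ (splitting $12303u^{-1}v^{-1}=12048u^{-1}v^{-1}+255u^{-1}v^{-1}$) to write $e_{0,0}=35193+12048\,a(u,v)+b(u,v)$ with $a(u,v)=u^{-1}+v^{-1}+u^{-1}v^{-1}$ and $|b(u,v)|\le 9303$ by the plain triangle inequality. The whole gain comes from the elementary identity $\Ree a(u,v)=(1+\cos\alpha)(1+\cos\beta)-(1+\sin\alpha\sin\beta)\ge -2$, i.e.\ a lower bound of $-2$ rather than the naive $-3$ for these three unimodular terms; this saves exactly $12048$ and gives $\Ree e_{0,0}\ge 35193-24096-9303=1794>0$. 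Without exhibiting either this estimate or the concrete nonnegative decomposition you allude to, your argument does not yet prove the statement.
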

 
 \begin{proof}
	  For $(x,y)\in\TT^2$ we have $|x| = |y| = 1$ .
Since $u_{0,0} = e_{0, 0}x^2y^2$, where \\ 
$e_{0, 0} = 1027x^2y^2 + 3630x^2 + 3630y^2 + 215x^2y^{-2} + 35193 + 215x^{-2}y^2 + 12048y^{-2} + 12048x^{-2} + 33y^{-4} + 12303x^{-2}y^{-2} + 33x^{-4} + 130x^{-2}y^{-4} + 130x^{-4}y^{-2} + 5x^{-4}y^{-4}$, \\
it is sufficient to show that that $e_{0, 0}$ has no zeros on unit torus.   
Put $x^2 = u$ and $y^2 = v$ then $|u| = |v| = 1$. Then \\ 
$e_{0, 0} = 1027uv+ 3630u + 3630v + 215uv^{-1} + 35193 + 215u^{-1}v+ 12048v^{-1} + 12048u^{-1} + 33v^{-2} + 12303u^{-1}v^{-1} + 33u^{-2} + 130u^{-1}v^{-2} + 130u^{-2}v^{-1} + 5u^{-2}v^{-2}$. \\
Since $12303u^{-1}v^{-1} = 12048u^{-1}v^{-1} +255u^{-1}v^{-1} $, we can presented $e_{0, 0}$ in the following form: \\
$$e_{0, 0} = 35193 +12048a(u,v)+b(u,v),$$ 
where $a(u,v)= u^{-1}+v^{-1}+u^{-1}v^{-1}$ and\\ 
$b(u,v) = 1027uv+ 3630u + 3630v + 215uv^{-1} + 215u^{-1}v+  33v^{-2} + 255u^{-1}v^{-1} + 33u^{-2} + 130u^{-1}v^{-2} +
130u^{-2}v^{-1} + 5u^{-2}v^{-2}$.\\ 
As $|u| = |v| = 1$, we see that $|b(u,v)| \leq  1027|uv|+ 3630|u| + 3630|v |+ 215|uv^{-1}| + 215|u^{-1}v|+  33|v^{-2}| +
255|u^{-1}v^{-1}| + 33|u^{-2}| + 130|u^{-1}v^{-2}| + 130|u^{-2}v^{-1}| + 5|u^{-2}v^{-2}| = 1027+ 3630 + 3630+ 215+ 215+ 33 +
255 + 33 + 130 + 130+ 5 \leq 9303$. Thus, $|b(u,v)| \leq  9303$ and hence $\Ree b(u,v) \in [-9303, 9303]$. 

Since $|u| = |v| = 1$, the complex numbers $u^{-1}$ and $v^{-1}$ have the following trigonometric form: $u^{-1}= \cos \alpha +
i\sin \alpha $ and $v^{-1}=\cos \beta+ i\sin \beta $. Therefore $\Ree a(u,v)=\cos \alpha +\cos \beta + \cos \alpha \cos \beta - \sin \alpha
\sin \beta = (1+\cos \alpha)  (1+\cos \beta) - (1+\sin \alpha \sin \beta) $. Evidently, $(1+\cos \alpha)  (1+\cos \beta) \in [0, 4]$ and
$(1+\sin \alpha \sin \beta) \in [0,2]$ and hence $ \Ree a(u,v)\in [-2, 4]$. As $e_{0, 0} = 35193 +12048a(u,v)+b(u,v)$ and
$\Ree b(u,v) \in [-9303, 9303]$, it implies that $ \Ree  e_{0, 0} \geq 1794$ for all complex numbers $x$ and $y$ such that $|x| =
|y| = 1$. Thus, $e_{0,0}$ has no zeros on unit torus.   
\end{proof}

Then, by Corollary \ref{c3.2}, we apply (\ref{eq4.1}) and obtain in the matrix notation,
$$ \cH_{1,0}= 
\begin{bmatrix}
0&0&1&-33&110&-130&57&-5\\
0&0&0&0&0&0&0&0\\
1&-215&3134&-12048&\framebox{18425}&-12303&3136&-130\\
0&0&0&0&0&0&0&0\\
110&-3630&18425&-35193&30692&-12048&1677&- 33\\
0&0&0&0&0&0&0&0\\
57&-1027& 3136&-3630&1677&-215&2&0\\
\end{bmatrix},$$
$$ \cH_{1,1}= 
\begin{bmatrix}
0&0&0&-33&0&-130&0&-5\\
0&0&215&0&3630&0&1027&0\\
0&-215&0&-12048&0&-12303&0&-130\\
33&0&12048&0&\framebox{35193}& 0&3630&0\\
0&-3630&0&-35193&0&-12048&0&-33\\
130&0&12303&0&12048&0&215&0\\
0&-1027&0&-3630&0&-215&0&0\\
 5&0&130&0&33&0&0&0\\
\end{bmatrix}
$$
and $H_{0,1}(x,y)=H_{1,0}(y,x)$.

We see that $|\supp H_{1,0} | = |\supp H_{0,1} | =29$ and $|\supp H_{1,1} | =  28$, hence for our construction
$N=86$.
Polynomials $H_{i,j}$ for the prewavelets constructed in \cite{RS} have 58 nonzero coefficients, so that $N=174$ there.

\subsection{Bivariate $C^2$ piecewise quartic box spline prewavelets}\label{square}
Let
$$
\Xi=\begin{bmatrix}
1 & 1 & 0 & 0 & 1 & 1\\
0 & 0 & 1 & 1 & 1 & 1
\end{bmatrix}.$$
Then the box spline $M_\Xi$ and its translates are $C^2$ piecewise quartic functions on the three-direction mesh,
$$H = 2^{-6} (1+x)^2 (1+y)^2  (1+xy)^2,$$
and, by \cite[p. 36]{RS}, 
\begin{align*}
\Phi = \frac{1}{322560} &\big(2x^{-3} + 1736x^{-2} +37742x^{{-1}} + 94992  + 37742 x+1736 x^2+2 x^3+ 34x^{-2}y \\
&+ 5100x^{{-1}}y + 37742y + 37742xy + 5100x^2y + 34x^3y + 34x^{{-1}}y^2 + 1736y^2\\
& + 5100xy^2 + 1736x^2y^2 + 34x^3y^2 +2y^3  +  34xy^3  +  34x^2y^3 + 2x^3y^3 + 34x^{-3}y^{{-1}}\\
& + 5100x^{-2}y^{{-1}} +37742x^{{-1}}y^{{-1}} + 37742y^{{-1}} + 5100x^1y^{{-1}}  + 34x^2y^{{-1}} + 34x^{-3}y^{-2}\\
& + 1736x^{-2}y^{-2} + 5100x^{{-1}}y^{-2} + 1736y^{-2} + 34xy^{-2} +  2 x^{-3}y^{-3}  +  34x^{-2}y^{-3} \\
& + 34x^{-1}y^{-3} + 2y^{-3}\big). 
\end{align*}
With $ c=2^6\cdot 322560$ we get
$W =\overline{c H}\Phi= w_{0,0} +w_{1,0}x^{{-1}}+w _{0,1}y^{{-1}}+w _{1,1}x^{{-1}}y^{{-1}}$, 
where    
\begin{align*}
w_{0,0} & =  1884x^2y^2 + 14088x^2 + 14088y^2 + 1884x^2y^{-2} + 555000 + 1884x^{-2}y^2 + 555000y^{-2}\\
&\quad + 555000x^{-2} +14088y^{-4} + 2380248x^{-2}y^{-2} + 14088x^{-4} + 555000x^{-2}y^{-4}\\
&\quad + 555000x^{-4}y^{-2} + 1884x^{-2}y^{-6} + 555000x^{-4}y^{-4} + 1884x^{-6}y^{-2} + 14088x^{-4}y^{-6}\\
&\quad  + 14088x^{-6}y^{-4} + 1884x^{-6}y^{-6},
\end{align*}
\begin{align*}
w_{1,0} & =  38x^4y^2 + 104x^4 + 8890x^2y^2 + 2x^4y^{-2} + 163440x^2 + 8890y^2 + 69054x^2y^{-2}\\
&\quad + 811088 + 38x^{-2}y^2 +104x^2y^{-4} + 1677952y^{-2} + 163440x^{-2} + 163440y^{-4}\\
&\quad + 1677952x^{-2}y^{-2} + 104x^{-4} + 38y^{-6} + 811088x^{-2}y^{-4} +69054x^{-4}y^{-2} + 8890x^{-2}y^{-6}\\
&\quad + 163440x^{-4}y^{-4} + 2x^{-6}y^{-2} + 8890x^{-4}y^{-6} + 104x^{-6}y^{-4} +38x^{-6}y^{-6},\\
w_{1,1} & =  2x^4y^4 + 104x^4y^2 + 104x^2y^4 + 38x^4 + 69054x^2y^2 + 38y^4 + 163440x^2 + 163440y^2\\
&\quad  + 8890x^2y^{-2} + 1677952 +8890x^{-2}y^2 + 811088y^{-2} + 811088x^{-2} + 8890y^{-4}\\
&\quad  + 1677952x^{-2}y^{-2} + 8890x^{-4} + 163440x^{-2}y^{-4} +163440x^{-4}y^{-2} + 38x^{-2}y^{-6}\\
&\quad  + 69054x^{-4}y^{-4} + 38x^{-6}y^{-2} + 104x^{-4}y^{-6} + 104x^{-6}y^{-4} +2x^{-6}y^{-6}
\end{align*}
and $w_{0,1}(x,y)=w_{1,0}(y,x)$.

\begin{proposition}\label{7.3}
The polynomial $u_{0,0} = \bar{w}_{0,0}$ has no zeros on the  unit torus and hence $u_{0,0}$  is invertible  in $\cL_{\downarrow}(\ell_1)$.  
\end{proposition}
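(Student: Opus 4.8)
The plan is to follow the pattern of Propositions~\ref{7.1} and \ref{7.2}: reduce the statement to the strict positivity on the unit torus of an auxiliary self-reciprocal Laurent polynomial, and then dominate it by isolating a few large coefficients and bounding the remaining monomials by the triangle inequality. First I would write $u_{0,0}=\overline{w_{0,0}}$ by reversing all exponents of $w_{0,0}$ (whose coefficients are real), and then factor out a monomial, $u_{0,0}=x^{2}y^{2}\,e_{0,0}$, chosen so that the dominant coefficient $2380248$ becomes the constant term of $e_{0,0}$. A direct inspection of the exponent list of $e_{0,0}$ shows that it is invariant under $k\mapsto -k$ with equal coefficients at $k$ and $-k$; hence $\overline{e_{0,0}(z)}=e_{0,0}(z)$ for $z\in\TT^{2}$, so $e_{0,0}$ is real-valued on the torus, and since $|x^{2}y^{2}|=1$ there it suffices to prove $e_{0,0}>0$ on $\TT^{2}$.

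Since every exponent occurring in $e_{0,0}$ is even, the substitution $u=x^{2}$, $v=y^{2}$ is legitimate, with $(u,v)$ again ranging over $\TT^{2}$. Grouping the monomials of $e_{0,0}$ by their coefficients yields
\begin{equation*}
e_{0,0}=2380248+555000\,g_{1}(u,v)+14088\,g_{2}(u,v)+1884\,g_{3}(u,v),
\end{equation*}
where $g_{1}=uv+u^{-1}v^{-1}+u+u^{-1}+v+v^{-1}$ and $g_{2},g_{3}$ are sums of six monomials each (namely $u^{2}v,u^{-2}v^{-1},uv^{2},u^{-1}v^{-2},uv^{-1},u^{-1}v$ and $u^{2}v^{2},u^{-2}v^{-2},u^{2},u^{-2},v^{2},v^{-2}$, respectively). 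The key computation is the identity
\begin{equation*}
g_{1}+3=(1+u+uv)(1+u^{-1}+u^{-1}v^{-1})=|1+u+uv|^{2}\ge 0\qquad\text{on }\TT^{2},
\end{equation*}
obtained by expanding the product, which gives the sharp lower bound $g_{1}\ge -3$. This is the crux of the argument: the naive termwise estimate $u+u^{-1}\ge -2$ would only produce $g_{1}\ge -6$, which is too weak to conclude, so this sharper bound — the analogue of the estimate $\Ree a(u,v)\in[-2,4]$ in Proposition~\ref{7.2} — is what makes the proof go through.

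Finally, on $\TT^{2}$ each monomial appearing in $g_{2}$ and $g_{3}$ has modulus $1$, so $|g_{2}|\le 6$ and $|g_{3}|\le 6$. Combining these with $g_{1}\ge -3$ gives
\begin{equation*}
e_{0,0}\ \ge\ 2380248-3\cdot 555000-6\cdot 14088-6\cdot 1884\ =\ 619416\ >\ 0
\end{equation*}
everywhere on $\TT^{2}$. Hence $u_{0,0}$ has no zeros on the unit torus, and by Wiener's lemma it is invertible in $\cL_{\downarrow}(\ell_{1})$, as claimed. The only genuinely delicate point is arranging the grouping so that the constant $2380248$ outweighs $3\cdot 555000$ together with the triangle-inequality remainder; the explicit expansion of $\overline{w_{0,0}}$ and the verification that $e_{0,0}$ is palindromic are routine bookkeeping, conveniently carried out with the computer algebra system already used elsewhere in the paper.
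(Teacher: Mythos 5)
Your proof is correct and follows essentially the same route as the paper: the same normalization $u_{0,0}=x^{2}y^{2}e_{0,0}$, the same substitution $u=x^{2}$, $v=y^{2}$, and the same grouping $e_{0,0}=2380248+555000\,a+1884\,b+14088\,c$ into the three symmetric six-term sums (your $g_1,g_3,g_2$ are the paper's $a,b,c$). The one point of divergence is how the dominant group is bounded below: you prove the sharp inequality $a\ge -3$ from the sum-of-squares identity $a+3=\bigl(1+u+uv\bigr)\bigl(1+u^{-1}+u^{-1}v^{-1}\bigr)=|1+u+uv|^{2}$ on $\TT^{2}$ and then settle for the crude $|b|,|c|\le 6$, arriving at $e_{0,0}\ge 619416$; the paper instead establishes $a\ge -4$ via $a=(u+1)(v+1)+(u^{-1}+1)(v^{-1}+1)-2$ together with $\Ree\bigl((u+1)(v+1)\bigr)\in[-1,4]$, and reuses this same bound for $b=a(u^{2},v^{2})$ and $c=a(uv^{2},uv^{-1})$, arriving at $e_{0,0}\ge 96360$. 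Both estimates are valid and both suffice for positivity; your identity is arguably the cleaner of the two, and you correctly identify why some sharpening is unavoidable, since the naive $a\ge -6$ gives $6\cdot 555000=3330000>2380248$.
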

\begin{proof}	
Let $e_{0, 0} = u_{0, 0} x^{-2}y^{-2}$; then $u_{0, 0}$  has zeros on the unit torus if and only if $e_{0, 0}$ has such
zeros. 
Put $x^2 =u$ and $y^2 = v$, then 
$e_{0, 0} = 2380248 +555000a(u, v) + 1884b(u, v) +14088c(u, v)$, 
where 
\begin{align*}
a(u, v) &= uv + u + v+ u^{-1} + v^{-1} + u^{-1}v^{-1},\\
b(u, v) &= u^2v^2 +u^2+ v^2 +u^{-2} + + v^{-2} +u^{-2}v^{-2}, \\
c(u, v) &= u^2v + uv^2 + uv^{-1} + u^{-1}v^{-2} +  u^{-1}v + u^{-2}v^{-1},
\end{align*}
and  $|u| = |v| = 1$ on the unit torus.
 
One can see that $a(u, v) = \overline{a(u, v)}$, $b(u, v)  = \overline{b(u, v)}$, $c(u, v)  = \overline{c(u, v)}$  and hence
the values of $a(u, v)$, $b(u, v)$, and $c(u, v)$ are real numbers. Since $|u| = |v| = 1$, we see that $a(u, v), b(u, v), c(u,
v) \in [-6, 6]$ on the unit torus.  

It is easy to see that $b(u, v) = a(u^2, v^2)$ and $c(u, v) = a(uv^2, uv^{-1})$. 
It is not difficult to show that $a(u, v) = (u + 1)(v +1) + (u^{-1} + 1)(v^{-1} +1) - 2$. 
Since $|u| = |v| = 1$, we see that $\Ree (u + 1), \Ree (v + 1) \in [0, 2]$ and 
$\Imm(u + 1), \Imm (v + 1) \in [-1, 1]$. Therefore, as 
$\Ree ((u + 1)(v +1)) = \Ree (u + 1) \Ree (v + 1) - \Imm(u + 1) \Imm(v + 1)$, 
we can conclude that $\Ree ((u + 1)(v +1))  \in [ {-1}, 4]$. Since 
$a(u, v)  = (u + 1)(v +1) + (u^{-1} + 1)(v^{-1} +1) - 2$, it easily implies that 
$a(u, v)  \in [ {-4}, 6]$. As $b(u, v) = a(u^2, v^2)$ and $c(u, v) = a(uv^2, uv^{-1})$, the above arguments show that 
$b(u, v), c(u, v) \in [ {-4}, 6]$. Thus, $a(u, v), b(u, v) , c(u, v)  \in [ {-4}, 6]$ and, as 
$e_{0, 0} = 2380248 +555000a(u, v) + 1884b(u, v) +14088c(u, v)$, we can see that $e_{0, 0}(x, y)$ is a real function and
$e_{0, 0}(x, y) \in [96360, 5806080]$ on the unit torus. Thus, we can conclude that $u_{0, 0}$ has no zeros on the unit torus.
\end{proof}

		Thus, by Corollary \ref{c3.2}, we can apply (\ref{eq4.1}) and hence obtain $H_{i,j}$ in the matrix notation
$$\cH_{1,0}={\scriptsize\begin{bmatrix}
0&0&0&0&38&-1884&8890&-14088&8890&-1884&38\\
0& 0&0&0&0&0&0&0&0&0&0\\
0&0&104&-14088&163440&-555000&\framebox{811088}&-555000&163440&-14088&104\\
0&0&0&0&0&0&0&0&0&0&0\\
2&-1884&69054&-555000&1677952&-2380248&1677952&-555000&69054&-1884&2\\
0&0&0&0&0&0&0&0&0&0&0\\
140&-14088&163440&-555000&811088&-555000&163440&-14088&104&0&0\\
0&0&0&0&0&0&0&0&0&0&0\\
38&-1884&8890&-14088&8890&-1884&38&0&0&0&0%
\end{bmatrix}},$$
$$\cH_{1,1}= {\scriptsize\begin{bmatrix}
0&0&0&0&0&0&38&0&104&0&2\\
0&0&0&0&0&-1884&0&-14088&0&-1884&0\\
0&0&0&0&8890&0&163440&0&69054&0&104\\
0&0&0&14088&0&-555000&0&-555000&0&-14088&0\\
0&0&8890&0&811088&0&\framebox{1677952}&0&163440&0&38\\
0&-1884&0&-555000&0&-2380248&0&-555000&0&-1884&0\\
38&0&163440&0&1677952&0&811088&0&8890&0&0\\
0&-14088&0&-555000&0&-555000&0&-14088&0&0&0\\
104&0&69054&0&163440&0&8890&0&0&0&0\\
0&-1884&0&-14088&0&-1884&0&0&0&0&0\\
2&0&104&0&38&0&0&0&0&0&0%
\end{bmatrix}},$$
and $H_{0,1}(x,y)=H_{1,0}(y,x)$.

Thus, $|\supp H_{i,j} | =43$ for all  $i,j$, hence $N=129$ for our construction.
The prewavelets constructed in \cite{RS} and \cite{BDG} have 91, respectively 51, nonzero coefficients, with 
$N=273$ or 153.

\subsection{Trivariate piecewise linear prewavelets}\label{3-dim}
Let
$$
\Xi=\begin{bmatrix}
1 & 0 & 0 & 1\\
0 & 1 & 0 & 1\\
0 & 0 & 1 & 1
\end{bmatrix}.$$
Then $M_\Xi$ and its integer translates generate a space $V_0$ of trivariate continuous piecewise linear functions, see 
\cite{BHR93}, and
$$
H = 2^{-4}(1+x)(1+y)(1+z) (1+xyz),$$ 
where we use the notation $x=z_1$, $y=z_2$, $z=z_3$.
To compute the function $\Phi$ given by \eqref{Phibox}, we evaluated the centered box spline $M_{\Xi\cup \Xi}^c(k)$ at points  
$k\in\ZZ^3$ in its support using the recurrence relation \cite[p.~17]{BHR93} and the code provided in
\cite{deBoor93}, which gives 
\begin{align*} 
\Phi =  \frac1{60}x^{-2}y^{-2}z^{-2}&\big(3xyz + 3x^3y^3z^3 + 3xy^2z^2 + 3x^2yz^2 + 3x^2y^2z + 3x^3y^2z^2 + 3x^2y^3z^2 +
3x^2y^2z^3\\
& + 2x^2yz + 2xy^2z + 2xyz^2 + 2x^2y^3z^3 + 2x^3y^2z^3 + 2x^3y^3z^2 + 24x^2y^2z^2\big),
\end{align*}
and it follows that 
\begin{align*}
W =\overline{ c H}\Phi &= 
w_{0,0,0} + w_{1,0,0}x^{{-1}} + w_{0,1,0}y^{{-1}} + w_{0,0,1}z^{{-1}} + w_{1,1,0}x^{{-1}}y^{{-1}}\\
&\quad + w_{1,0,1}x^{{-1}}z^{{-1}} + w_{0,1,1}y^{{-1}}z^{{-1}} + w_{1,1,1}x^{{-1}}y^{{-1}}z^{{-1}},
\end{align*}
where  $ c = 2^4\cdot 60 $ and
\begin{align*}
w_{0,0,0} &=  45 + 45x^{-2}y^{-2}z^{-2}+5x^{-2}z^{-2} + 5y^{-2} + 5y^{-2}z^{-2} +5x^{-2}y^{-2} + 5x^{-2}  + 5z^{-2}, \\
w_{1,0,0} &= 10x^2 + 40 + 10z^{-2} + 10y^{-2} + 40y^{-2}z^{-2} + 10x^{-2}y^{-2}z^{-2},\\
w_{1,1,0} &= 5x^2y^2 + 5x^2 + 5y^2 + 45 + 45z^{-2} + 5y^{-2}z^{-2} + 5x^{-2}z^{-2} + 5x^{-2}y^{-2}z^{-2},\\
w_{1,1,1} &= 3x^2y^2z^2 + 2x^2y^2 + 2x^2z^2 + 2y^2z^2 + 3x^2 + 3y^2 + 3z^2 + 84 + 3z^{-2} + 3y^{-2} + 3x^{-2}\\
&\quad + 2y^{-2}z^{-2} + 2x^{-2}z^{-2} + 2x^{-2}y^{-2} + 3x^{-2}y^{-2}z^{-2},
\end{align*}
with $w_{0,1,0}(x,y,z)=w_{1,0,0}(y,x,z)$, $w_{0,0,1}(x,y,z)=w_{1,0,0}(z,y,x)$, $w_{1,0,1}(x,y,z)=w_{1,1,0}(x,z,y)$, 
$w_{0,1,1}(x,y,z)=w_{1,1,0}(z,y,x)$, in accordance with Lemma~\ref{l3.2}.

\begin{proposition}\label{p7.4} 
The polynomial $u_{1,1,1} = \bar{w}_{1,1,1} $ has no roots on the unit torus.  
\end{proposition}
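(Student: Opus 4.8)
The plan is to mimic the strategy used in the earlier propositions (notably Propositions~\ref{7.1}, \ref{7.2}, \ref{7.3}): since $W(z)=\overline{U(z)}$ and $w_{1,1,1}=\overline{u_{1,1,1}}$, on the unit torus where $\bar z=z^{-1}$ the polynomial $u_{1,1,1}$ coincides with $w_{1,1,1}$, and because $w_{1,1,1}$ is visibly invariant under $z\mapsto z^{-1}$ (every term comes paired with its reciprocal), $u_{1,1,1}$ takes only real values on $\TT^3$. So it suffices to bound it away from zero on $\TT^3$. First I would isolate the large constant term, writing $u_{1,1,1}=84+r(x,y,z)$ where $r$ collects the remaining $14$ monomials, whose coefficients are $3,3,3,3,3,3$ (the six $z_i^{\pm2}$-type and cross terms with coefficient $3$, namely $x^2y^2z^2$, $x^2$, $y^2$, $z^2$, $z^{-2}$, $y^{-2}$, $x^{-2}$, $x^{-2}y^{-2}z^{-2}$ — that is eight terms of coefficient $3$) together with $2,2,2,2,2,2$ (the six mixed terms $x^2y^2$, $x^2z^2$, $y^2z^2$, $y^{-2}z^{-2}$, $x^{-2}z^{-2}$, $x^{-2}y^{-2}$). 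Counting, there are eight coefficients equal to $3$ and six equal to $2$, so the crude triangle-inequality bound gives $|r(x,y,z)|\le 8\cdot 3+6\cdot 2=36$ on $\TT^3$, whence $u_{1,1,1}\ge 84-36=48>0$ on the unit torus.

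From this I would conclude, exactly as in the cited propositions, that $u_{1,1,1}$ has no zeros on $\TT^3$ and is therefore invertible in $\cL_\downarrow(\ell_1)$ by Wiener's lemma, which is the statement to be proved. The argument is genuinely just: reality on the torus, plus constant term strictly larger than the sum of the absolute values of the remaining coefficients.

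The only mild subtlety — and the point I would take care to state cleanly rather than the ``hard part'' — is the passage from $u_{1,1,1}$ to $w_{1,1,1}$: one must note that $u_{1,1,1}$ and $w_{1,1,1}=\overline{u_{1,1,1}}$ agree on $\TT^3$ only after observing the conjugation-symmetry of the list of monomials, i.e.\ that $\overline{w_{1,1,1}}=w_{1,1,1}$ on the torus because the coefficients are real and the exponent set is symmetric under $k\mapsto -k$. Once that is recorded, the estimate $84>36$ closes everything with room to spare; there is no genuine obstacle, only bookkeeping of the fourteen non-constant coefficients. I would present it in three or four sentences in the same style as Proposition~\ref{7.1}.
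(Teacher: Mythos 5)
Your proposal is correct and follows essentially the same argument as the paper, which likewise bounds the fourteen non-constant terms of $w_{1,1,1}$ by $8\cdot 3+6\cdot 2=36<84$ on the unit torus to conclude $u_{1,1,1}\ne 0$. The extra remarks on reality of $w_{1,1,1}$ are harmless but not needed, since $|w_{1,1,1}-84|\le 36$ already forces $w_{1,1,1}\ne 0$ (and $|u_{1,1,1}|=|w_{1,1,1}|$ on $\TT^3$).
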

	\begin{proof}
By estimating the absolute value of each non-constant term in $w_{1,1,1}$ using the fact that 
$|x| = |y| = |z| = 1$  on the unit torus, we conclude that $|w_{1,1,1}-84|\le 36$, and hence $u_{1,1,1}\ne 0$.
\end{proof}
Thus, by Corollary \ref{c3.2}, we can apply (\ref{eq4.1}). Therefore, 
\begin{align*} 
H_{1,0,0} &= w_{0,1,1}xyz - w_{1,1,1}yz\\
&= -3x^2y^3z^3 + 5xy^3z^3 - 2x^2y^3z - 2x^2yz^3 - 2y^3z^3 + 5xy^3z + 5xyz^3 - 3x^2yz - 3y^3z\\
&\quad - 3yz^3 + 45xyz - 84yz + 45x^{-1}yz- 3yz^{-1} - 3y^{-1}z - 3x^{-2}yz + 5x^{-1}yz^{-1} + 5x^{-1}y^{-1}z\\
&\quad - 2y^{-1}z^{-1} - 2x^{-2}yz^{-1} - 2x^{-2}y^{-1}z + 5x^{-1}y^{-1}z^{-1} - 3x^{-2}y^{-1}z^{-1}.
\end{align*}
To illustrate the structure of this polynomial we use matrices of coefficients
corresponding to the monomials $x^iy^j$ as in previous subsections,
with the coefficient of the constant monomial $x^0y^0$ framed,
$$\cH_{1,0,0} =
{\scriptsize\begin{bmatrix}
-2&5&-3\\
0&0&\framebox{0}\\
-3&5&-2\\
\end{bmatrix}}z^{-1}+
{\scriptsize\begin{bmatrix}
0&0&-3&5&-2\\
0&0&0&0&0\\
-3&45&-84&45&-3\\
0&0&\framebox{0}&0&0\\
-2&5&-3&0&0\\
\end{bmatrix}}z+
{\scriptsize\begin{bmatrix}
-2&5&-3\\
0&0&0\\
-3&5&-2\\
\framebox{0}&0&0\\
\end{bmatrix}}z^3.$$
According to Proposition~\ref{3.3}, $H_{0,1,0}(x,y,z)=H_{1,0,0}(y,x,z)$ and $H_{0,0,1}(x,y,z)=H_{1,0,0}(z,y,x)$, and we see
that $|\supp H_{1,0,0}|=|\supp H_{0,1,0}|=|\supp H_{0,0,1}|=23$.

Furthermore,
\begin{align*} 
H_{1,1,0} &= w_{0,0,1}xyz - w_{1,1,1}z \\
&= -3x^2y^2z^3 - 2x^2y^2z - 2x^2z^3 + 10xyz^3 - 2y^2z^3 - 3x^2z + 40xyz - 3y^2z - 3z^3\\
&\quad + 10xy^{-1}z - 84z + 10x^{-1}yz - 3z^{-1}- 3y^{-2}z + 40x^{-1}y^{-1}z - 3x^{-2}z\\
&\quad - 2y^{-2}z^{-1} + 10x^{-1}y^{-1}z^{-1} - 2x^{-2}z^{-1} - 2x^{-2}y^{-2}z -3x^{-2}y^{-2}z^{-1},
\end{align*}
such that
$$\cH_{1,1,0} =
{\scriptsize\begin{bmatrix}
-2&0&\framebox{-3}\\
0&10&0\\
-3&0&-2\\
\end{bmatrix}}z^{-1}+
{\scriptsize\begin{bmatrix}
0&0&-3&0&-2\\
0&10&0&40&0\\
-3&0&\framebox{-84}&0&-3\\
0&40&0&10&0\\
-2&0&-3&0&0\\
\end{bmatrix}}z+
{\scriptsize\begin{bmatrix}
-2&0&-3\\
0&10&0\\
\framebox{-3}&0&-2\\
\end{bmatrix}}z^{3}.$$

By Proposition~\ref{3.3}, $H_{1,0,1}(x,y,z)=H_{1,1,0}(x,z,y)$ and $H_{0,1,1}(x,y,z)=H_{1,1,0}(z,y,x)$.
We observe that $|\supp H_{1,1,0}|=|\supp H_{1,0,1}|=|\supp H_{0,1,1}|=21$.

Finally,
\begin{align*} 
H_{1,1,1} & = w_{0,0,0}xyz - w_{1,1,1}\\ 
&= -3x^2y^2z^2 - 2x^2y^2 - 2x^2z^2 - 2y^2z^2 + 45xyz - 3x^2 - 3y^2 - 3z^2 + 5xyz^{-1}\\
&\quad + 5xy^{-1}z + 5x^{-1}yz - 84 +5xy^{-1}z^{-1} + 5x^{-1}yz^{-1} + 5x^{-1}y^{-1}z - 3z^{-2} - 3y^{-2}\\
&\quad - 3x^{-2} + 45x^{-1}y^{-1}z^{-1} - 2y^{-2}z^{-2} -2x^{-2}z^{-2} - 2x^{-2}y^{-2} - 3x^{-2}y^{-2}z^{-2},
\end{align*}
with
\begin{align*}
\cH_{1,1,1} &=
{\scriptsize\begin{bmatrix}%
-2&0&\framebox{-3}\\
0&0&0\\
-3&0&-2\\
\end{bmatrix}}z^{-2}+
{\scriptsize\begin{bmatrix}
5&0&5\\
0&\framebox{0}&0\\
45&0&5\\
\end{bmatrix}}z^{-1}
+{\scriptsize\begin{bmatrix}
0&0&-3&0&-2\\
0&0&0&0&0\\
-3&0&\framebox{-84}&0&-3\\
0&0&0&0&0\\
-2&0&-3&0&0\\
\end{bmatrix}}z^0\\
&\quad+{\scriptsize\begin{bmatrix}
5&0&45\\
0&\framebox{0}&0\\
5&0&5\\
\end{bmatrix}}z^1
+{\scriptsize\begin{bmatrix}
-2&0&-3\\
0&0&0\\
\framebox{-3}&0&-2\\
\end{bmatrix}}z^2.
\end{align*}
We have
$|\supp H_{1,1,1} | =  23$. Thus, $N=155$. 

\section*{Acknowledgements}
The second author has received funding through the European Union's MSCA4Ukraine project (ID 1232926).

\bibliographystyle{abbrv} 
\bibliography{wavelets}

\begin{thebibliography}{10}

\bibitem{deBoor93}
C.~{\noopsort{Boor}}{de Boor}.
\newblock On the evaluation of box splines.
\newblock {\em Numerical Algorithms}, 5(1):5--23, 1993.

\bibitem{BHR93}
C.~{\noopsort{Boor}}{de Boor}, K.~H{\"o}llig, and S.~Riemenschneider.
\newblock {\em Box splines}.
\newblock Springer, 1993.

\bibitem{BDG}
M.~D. Buhmann, O.~Davydov, and T.~N. Goodman.
\newblock Box spline prewavelets of small support.
\newblock {\em Journal of Approximation Theory}, 112(1):16--27, 2001.

\bibitem{CSW92}
C.~K. Chui, J.~St{\"o}ckler, and J.~D. Ward.
\newblock Compactly supported box-spline wavelets.
\newblock {\em Approximation Theory and its Applications}, 8(3):77--100, 1992.

\bibitem{DM83}
W.~Dahmen and C.~A. Micchelli.
\newblock Translates of multivariate splines.
\newblock {\em Linear Algebra and its Applications}, 52:217--234, 1983.

\bibitem{JM}
R.-Q. Jia and C.~A. Micchelli.
\newblock Using the refinement equations for the construction of pre-wavelets
  ii: Powers of two.
\newblock In {\em Curves and surfaces}, pages 209--246. Elsevier, 1991.

\bibitem{KO95}
U.~Kotyczka and P.~Oswald.
\newblock Piecewise linear prewavelets of small support.
\newblock In {\em Approximation Theory VIII. Vol. 2. Wavelets}, pages 235--242.
  World Scientific, 1995.

\bibitem{KPS16}
A.~Krivoshein, V.~Protasov, and M.~A. Skopina.
\newblock {\em Multivariate wavelet frames}.
\newblock Springer, 2016.

\bibitem{Lai06}
M.-J. Lai.
\newblock Construction of multivariate compactly supported prewavelets in {L2}
  space and pre-{R}iesz bases in {S}obolev spaces.
\newblock {\em Journal of Approximation Theory}, 142(2):83--115, 2006.

\bibitem{Meyer93}
Y.~Meyer.
\newblock {\em Wavelets and operators}.
\newblock Cambridge University Press, 1993.

\bibitem{Park04}
H.~Park.
\newblock Symbolic computation and signal processing.
\newblock {\em Journal of Symbolic Computation}, 37(2):209--226, 2004.

\bibitem{RS}
S.~D. Riemenschneider and Z.~Shen.
\newblock Wavelets and pre-wavelets in low dimensions.
\newblock {\em Journal of Approximation Theory}, 71(1):18--38, 1992.

\bibitem{Stockler93}
J.~St\"{o}ckler.
\newblock Multivariate wavelets.
\newblock In {\em Wavelets: A Tutorial in Theory and Applications}, pages
  325--355, USA, 1993. Academic Press Professional, Inc.

\end{thebibliography}

\end{document}